\documentclass[11pt]{amsart}
\usepackage{amsfonts}
\usepackage{amssymb}
\usepackage[arrow,matrix]{xy}
\usepackage{amsmath,amssymb, bbm, amscd, amsthm, mathrsfs, hyperref, mathtools}
\allowdisplaybreaks[2]   
\usepackage[titletoc]{appendix}
\usepackage{tikz-cd}
\usepackage{bm}
\usepackage{eqparbox}

\DeclareMathAlphabet{\mathpzc}{OT1}{pzc}{m}{it}

\theoremstyle{plain}
\newtheorem{thm}{Theorem}[section]

\newtheorem{lem}[thm]{Lemma}
\newtheorem{prop}[thm]{Proposition}
\newtheorem{notation}[thm]{Notation}

\theoremstyle{definition}
\newtheorem{defn}[thm]{Definition}

\newtheorem{rem}[thm]{Remark}
\newtheorem{con}[thm]{Convention}
\newtheorem{hypo}[thm]{Hypothesis}

\numberwithin{equation}{section}

\def\al{\alpha}

\def\mc{\mathcal}
\def\mk{\mathfrak}

\def\it{\textit}

\def\ot{\otimes}
\def\op{\oplus}
\def\le{\leqslant}
\def\ge{\geqslant}

\def\ra{\rightarrow}

\def\Hom{\operatorname {Hom}}
\def\Ext{\operatorname {Ext}}

\def\Im{\operatorname {Im}}
\def\dim{\operatorname {dim}}
\def\id{\operatorname {id}}

\def\GL{\operatorname {GL}}

\def\tr{\operatorname {tr}}
\def\hdet{\operatorname {hdet}}
\def\Aut{\operatorname {Aut}}
\def\GrAut{\operatorname {GrAut}}
\def\Sym{\operatorname {Sym}}
\def\Tr{\operatorname {Tr}}
\def\GKdim{\operatorname {GKdim}}

\def\kk{\mathbbm{k}}
\def\ZZ{\mathbb{Z}}
\def\NN{\mathbb{N}}

\def\C{\mathbb{C}}

\begin{document}
	\title[Group actions on Yang-Baxter algebras]{\bf Yang-Baxter permutation group actions on distributive Yang-Baxter algebras}

	\author{Ji-Wei HE}
	\address {Ji-Wei He\newline School of Mathematics, Hangzhou Normal University, Hangzhou, Zhejiang 310036, China}
	
	\email{jwhe@hznu.edu.cn}
	
	\author{Xiaolan YU}
	\address {Xiaolan YU\newline School of Mathematics, Hangzhou Normal University, Hangzhou, Zhejiang 310036, China}
	
	\email{xlyu@hznu.edu.cn}

	\date{}

	\begin{abstract}
		Let $(X,r)$ be a distributive set-theoretical solution of the Yang-Baxter equation and $\mathcal{A}(X,r)$ the associated Yang-Baxter algebra. We prove that $\mathcal{A}(X,r)$ is isomorphic to a skew polynomial algebra and compute its Nakayama automorphism explicitly. We study the action of the permutation group $\mathcal{G}(X,r)$. It induces a subgroup $\overline{\mathcal{G}}\subseteq\operatorname{Aut}(\mathcal{A}(X,r))$, the induced automorphism group, for which $\mathcal{A}(X,r)$ is a faithful module. We characterize when $\overline{\mathcal{G}}$ is a reflection group and, in that case, describe the invariant subalgebra $\mathcal{A}(X,r)^{\overline{\mathcal{G}}}$ together with its Jacobian, reflection arrangement and discriminant. We further establish the Auslander theorem for a large class of distributive Yang-Baxter algebras.  Finally, for a class of nontrivial distributive Yang--Baxter algebras, we obtain  a lower
		bound for the pertinency of the group action induced by the permutation group.
		
	\end{abstract}

	\keywords{Yang-Baxter equation; Artin-Schelter regular algebra; distributive solution}
	\subjclass[2020]{16T25, 16E65, 16W22.}
	
	\maketitle
	
	\section*{Introduction}\label{0}

	The Yang-Baxter equation (YBE) is a fundamental equation in mathematical physics and representation theory, arising originally from the study of exactly solvable models in statistical mechanics \cite{bax,yang}. It laid the foundations of the theory of quantum groups and Hopf algebras (see for example \cite{bg1,kas}). In a seminal work, Drinfeld \cite{dr92} proposed the study of set-theoretical solutions to the YBE, i.e., bijective maps \(r\colon X\times X\to X\times X\) on a set \(X\) satisfying the braid relation
	\[
	(r\times \mathrm{id}_X)(\mathrm{id}_X\times r)(r\times \mathrm{id}_X)
	=(\mathrm{id}_X\times r)(r\times \mathrm{id}_X)(\mathrm{id}_X\times r)
	\]
	on \(X\times X\times X\). This line of research has since flourished, leading to systematic classifications and deep connections with braid groups, semigroups of I-type, and noncommutative algebra \cite{cho,ess,giv,lyz,ru07}. Further developments revealed links to rack and quandle theory, enriching the interplay between set-theoretical solutions and low-dimensional topology \cite{br,cjks,fr}.
	
	To each set-theoretical solution \((X, r)\), one can associate an associative algebra, often called the Yang-Baxter (YB) algebra \(\mathcal{A}(X, r)\) (in the sense of \cite{man}). Typically defined as the quotient of the free algebra \(\kk\langle X \rangle\) by relations encoding the map \(r\), this algebra serves as a ``noncommutative coordinate ring'' of the solution. Understanding its algebraic structure is thus a natural step,  as the algebra
	reflects the combinatorial dynamics of \(r\) through its homological and
	ring-theoretic properties. For instance, when the solution is square-free and involutive, the corresponding algebra is known to be a quadratic algebra with good homological properties \cite{gi4,gi18,giv,jkv}. Especially, for finitely generated quantum binomial algebras \cite{gi6,gim1},
	such YB algebras are actually Artin-Schelter (AS) regular \cite{gi2}, meaning they share the similar homological properties as commutative polynomial rings. In this paper we will mainly study this class of AS regular algebras,
	namely the quantum binomial YB algebras.
	
	AS regular algebras form a central topic in noncommutative algebras. Among the most important problems in this area is the study of group actions on AS regular algebras and their invariant subalgebras, which provides noncommutative counterparts to classical invariant theory. Now recall that every set-theoretical solution \((X,r)\) gives rise to a  {permutation group} \(\mc{G}(X,r)\subseteq\operatorname{Sym}(X)\), namely the subgroup generated by the left actions \(L_x:y\mapsto y'\) where \(r(x,y)=(y',x')\). A natural question is: when does this permutation group act on the YB algebra \(\mathcal{A}(X,r)\) by algebra automorphisms? The answer turns out to be precisely when the solution is  {distributive}. In this case, the action of \(\mc{G}(X,r)\) on \(X\) extends linearly to an action on \(\mathcal{A}(X,r)\).
	
	We first investigate the structure of the YB algebra for a distributive solution. We prove that such an algebra is isomorphic to a skew polynomial algebra.  It can also be derived from \cite{gim2}. However, our proof  is more elementary and self-contained. We then compute its Nakayama
	automorphism  explicitly.

	Since studying invariant subalgebras is a major theme in the theory of AS regular algebras, we then consider the action of the permutation group $\mc{G}(X,r)$ on \(\mathcal{A}(X,r)\). To be precise, $\mathcal{G}(X,r)$ induces a subgroup
	$\overline{\mathcal{G}}\subseteq \operatorname{Aut}(\mathcal{A}(X,r))$, called the \it{induced automorphism group}, such that $\mathcal{A}(X,r)$ is a faithful $\overline{\mathcal{G}}$-module. We give a necessary and sufficient condition for the induced automorphism group
	\(\overline{\mathcal{G}}\) to be a reflection group, and in that case we describe
	the invariant subalgebra \(\mathcal{A}(X,r)^{\overline{\mathcal{G}}}\) and compute
	its Jacobian, reflection arrangement and discriminant, respectively.

	Furthermore, we address the {Auslander theorem} for distributive YB algebras.
	Originating from M. Auslander's foundational work in commutative algebra and
	representation theory \cite{aus62}, this theorem concerns the natural map
	from a skew group ring to the endomorphism ring of the algebra viewed as a module
	over its invariant subalgebra. It is a fundamental result in the study of the McKay correspondence,
	isolated singularities, and other homological aspects of commutative algebras. We establish the Auslander theorem for a large class of distributive YB algebras.
	
	Finally, we turn to the notion of  {pertinency}, a numerical invariant
	introduced by Bao-Zhang and the first named author \cite{bhz, bhz1}. In some sense, the pertinency  determines whether the Auslander Theorem holds or not. However, it is usually difficult to compute the pertinercy.  For a class of nontrivial distributive YB algebras, we obtain a lower
	bound for the pertinency of the group action induced by the permutation group.

	\section{Preliminaries}\label{1}
	Throughout $\kk$ is a base field of characteristic zero. All vector spaces and algebras are over $\kk$.

	\subsection{Set-theoretic solution of of the Yang-Baxter equation} By a \it{quadratic set}, we mean a pair $(X,r)$, where $X$ is a nonempty set and  $r:X\times X\ra X\times X$  a bijective map. We write the image of $(x,y)$ under $r$ as
	$$r(x, y)=\left({}^xy, x^y\right).$$
	The above equation defines a ``left action'' $\mc{L}:X\times X\ra X$, and a  ``right action'' $\mc{R}:X\times X\ra X$, on $X$ as:
	$$\mc{L}_x(y)={}^xy,\;\;\;\;\;  \mc{R}_y(x)=x^y,$$
	for all $x,y\in X$.
	
	A quadratic set $(X,r)$ is called:
	\begin{enumerate}
		\item 	\it{non-degenerate} if the maps $\mc{L}_x$ and $\mc{R}_x$ are  bijections, for all  $x\in X$;
		\item \it{involutive} if  $r^{2}={\id}_{X^{2}}$;
		\item \it{square-free} if $ r(x, x)=(x, x)$, for every  $x \in X$;
		\item a \it{quantum binomial set} if it is nondegenerate, involutive and square-free.
		
	\end{enumerate}

	A \it{set-theoretic solution of the Yang-Baxter equation} (YBE) is a quadratic set $(X, r)$, such that the following braid relation holds
	$$(\id \times r)(r \times \id)(\id \times r)=(r \times \id)(\id \times r)(r \times \id).$$
	In this case, $(X,r)$ is also called a \it{braided set}.
	A solution is called \it{non-degenerate} (\it{involutive}, or \it{square-free}) if it is non-degenerate (involutive, or square-free) as a quadratic set.
	If $X$ is a finite set, then $(X,r)$ is called a finite solution.

	\begin{con}
		In the followings, by a \it{solution} (of the YBE) we mean  a set-theoretic, non-degenerate and involutive solution of the YBE. We consider only \it{finite} solutions.
	\end{con}

	Each quadratic set $(X,r)$ determines a set of quadratic relations $\bold{R}_0(r)$ defined by
	$$\begin{array}{l}
		x y=y' x' \in \bold{R}_0(r) \text{ if and only if }\\
		r(x,y)=(y',x') \text{ and } (x, y) \neq (y',x')  \text{ holds in } X\times X.
	\end{array}
	$$
	The monoid $S(X,r)=\langle X;\bold{R}_0(r) \rangle$, with a set of generators $X$ and a set of defining relations $\bold{R}_0(r)$, is called the \it{semigroup associated with} $(X,r)$. The \it{group} $G(X,r)$ \it{associated with} $(X,r)$ is defined analogously.
	The \it{algebra associated with}  $(X, r)$  is defined as
	$\mc{A}(X, r)=\kk\langle X\rangle /(\bold{R}(r))$, where
	$$\bold{R}(r)=\{xy-y'x'\mid xy=y'x'\in \bold{R}_0(r)\}.$$
	
	When $(X,r)$ is a solution of the YBE, $S(X,r)$, resp. $G(X,r)$, $\mc{A}(X, r)$  is called the \it{Yang-Baxter} (YB) \it{semigroup}, resp. the  \it{YB group}, the  \it{YB algebra}.
	
	\begin{defn}\cite{gi4}, \cite{gi5}
		A monoid $S$ is called of \textit{a monoid of skew-polynomial type}  if it has a standard finite presentation as
		\[
		S = \langle X; \bold{R}_0 \rangle,
		\]
		where the set of generators $X$ is ordered: $x_1 < x_2 < \cdots < x_n$, and $\bold{R}_0$ is a set of $\binom{n}{2}$ quadratic relations,
		\begin{equation}\label{e R}
			\bold{R}_0 = \{x_j x_i = x_{i'} x_{j'} \mid 1 \le i < j \le n,\ 1 \le i' < j' \le n\},
		\end{equation}
		satisfying
		\begin{itemize}
			\item[(i)] each monomial $xy \in X^2$, with $x \ne y$, occurs in exactly one relation in $\bold{R}_0$ (a monomial of the type $xx$ does not occur in any relation in $\bold{R}_0$);
			\item[(ii)] if $(x_j x_i = x_{i'} x_{j'}) \in \bold{R}_0$, with $1 \le i < j \le n$, then $i' < j'$, and $j > i'$ (This also imply $i < j'$, see \cite{gi4});
			\item[(iii)] the monomials $x_k x_j x_i$ with $k > j > i$, $1 \le i, j, k \le n$ do not give rise to new relations in $S$.
		\end{itemize}
	\end{defn}
	
	\begin{rem}\label{rem0}Clearly, the set of relations (\ref{e R}) defines canonically an involutive bijective map
		\[\begin{array}{rcll}
			r: X \times X &\longrightarrow &X \times X,&\\
			
			(x_j, x_i) &\longleftrightarrow &(x_{i'}, x_{j'}), &1 \le i < j \le n, (i' < j', j > i', i < j');\\
			
			(x_i, x_i)& \longleftrightarrow& (x_i, x_i), & 1\le i\le n.\\
		\end{array}
		\]
		and $(X, r)$ is a quantum binomial quadratic set. \cite{gi5}
		
		Consequently, for $x_i,x_j\in X$ and $i\neq j$, if $x_ix_j=x_sx_t$ in $\mc{A}(X,r)$ then either $i=s$ and $j=t$ or $r(x_i,x_j)=(x_s,x_t)$.
	\end{rem}
	
	The following lemma follows from \cite[Theorem 2.26]{gi6}.
	\begin{lem}\label{lem skew}
		Let $(X, r)$ be a square-free solution of the YBE, where $X$ is a finite quantum binomial set with $n$ elements, $n \ge 2$.
		Then there exists an ordering on $X$ such that
			$S = S(X, r)$ is a semigroup of skew-polynomial type.
	
	\end{lem}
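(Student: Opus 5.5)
The paper says this lemma follows from \cite[Theorem 2.26]{gi6}. The plan is to reduce it to that theorem: show that a finite square-free solution satisfies all of its hypotheses, then translate its conclusion into conditions (i)--(iii) of the definition of a monoid of skew-polynomial type.

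\textbf{Step 1: hypotheses.} Under our standing convention, $(X,r)$ is a non-degenerate involutive solution. Together with square-freeness this makes $(X,r)$ a quantum binomial set, and by assumption it satisfies the braid relation. These are exactly the hypotheses of \cite[Theorem 2.26]{gi6}. That theorem states that such solutions with $|X|=n$ are precisely those whose YB semigroups are of skew-polynomial type with respect to some enumeration of $X$.

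\textbf{Step 2: conditions (i) and (ii).} Involutivity and square-freeness imply that every monomial $xy$ with $x\neq y$ occurs in exactly one relation of $\bold{R}_0(r)$, and that no monomial $xx$ occurs in any relation. Indeed, $r$ pairs $(x,y)$ with $r(x,y)\neq (x,y)$, since a fixed point with $x\neq y$ would contradict the cyclic condition and square-freeness. The substantive content is the existence of an ordering $x_1<\cdots<x_n$ such that each relation has the form $x_jx_i=x_{i'}x_{j'}$ with $i<j$, $i'<j'$ and $j>i'$. This is where Gateva-Ivanova's combinatorics enters. One builds the order inductively using the fact, from \cite{gi6}, that a finite square-free solution is decomposable, so that $X$ splits into $r$-invariant pieces. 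One then orders so that the left actions $\mc{L}_x$ respect the order. This step is the main obstacle, and I would cite it directly rather than reprove it.

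\textbf{Step 3: condition (iii).} The braid relation guarantees that the ambiguities $x_kx_jx_i$ resolve, so that the relations form a Gröbner basis. Concretely, the two reduction paths on $X^3$ correspond to $(r\times\id)(\id\times r)(r\times\id)$ and $(\id\times r)(r\times\id)(\id\times r)$, which coincide. Hence no new degree-three relations appear, and $S(X,r)$ is of skew-polynomial type.
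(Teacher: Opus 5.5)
Your proposal is correct and takes the same route as the paper, which likewise just invokes \cite[Theorem 2.26]{gi6}; since that theorem already supplies an enumeration satisfying (i)--(iii), your Steps 2 and 3 are redundant, and the Gr\"obner-basis argument in Step 3 is only a sketch of what the cited theorem proves. One correction to your aside in Step 2: decomposability of finite square-free solutions is not a result of \cite{gi6} but Rump's theorem \cite{ru}, which settled a conjecture of Gateva-Ivanova, so it should not be attributed to \cite{gi6}.
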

	
	Let $(X,r)$ be a nondegenerate quadratic set. It is well-known that $(X,r)$ is a set-theoretic solution of the YBE if and only if the following condition holds for all $x,y,z\in X$ (e.g. \cite[Lemma 2.5]{gim1}).
	$$\mathbf{l1}:  {}^{x}\left(^{y} z\right)={ }^{^{x} y}\left(^{x^{y}} z\right), \quad \mathbf{r1}: \left(x^{y}\right)^{z}=\left(x^{^yz}\right)^{y^{z}}, \quad \mathbf{lr3}: \left({ }^{x} y\right)^{\left(^{x^{y}}(z)\right)}={ }^{(x{^{^y z}})}\left(y^{z}\right).$$
	
	For each non-degenerate braided set, condition {\bf l1} implies that  the map $x\ra \mc{L}_x$ extends canonically to a group homomorphism
	$$\mc{L}:G(X,r)\longrightarrow \Sym(X),$$
	where $\Sym(X)$ is symmetric group on $X$. The homomorphism $\mc{L}$ defines the \it{canonical left action} of $G(X,r)$ on the set $X$. Similarly, {\bf r1} gives the \it{canonical right action} of
	$G(X,r)$ on the set $X$.
	
	The  \it{YB permutation group} $\mc{G}(X, r)$ of a finite solution $(X, r)$ is the subgroup of the symmetric
	group $\Sym(X)$ on $X$ generated by $\{\mc{L}_x \mid x\in X\}$.
	
	A quadratic set  $(X, r)$  is called \it{cyclic} if the following conditions are satisfied
	$${\bf cl1}:  \quad {}^{y^{x}} x={ }^{y} x  \text{ for all }  x, y \in X; \quad  {\bf cr1}:  \quad x^{{}^x y}=x^{y}   \text{ for all } x,y\in X; $$
	$${\bf cl2}:  \quad {}^{{}^{x} y} x={ }^{y} x \text{ for all }   x, y \in X; \quad {\bf cr2}:  \quad x^{y^{x}}=x^{y}  \text{ for all }   x, y \in X.$$
	The above conditions are usually called  \it{cyclic conditions}.
	Condition {\bf lri} is defined as
	
	${\bf lri} : \quad\quad\quad\quad\quad\quad\quad\quad\quad\quad\quad\left({ }^{x} y\right)^{x}=y={ }^{x}\left(y^{x}\right)$
	
	for all  $x, y \in X$. In other words {\bf lri} holds if and only if  $(X, r)$  is nondegenerate and  $\mathcal{R}_{x}=\mathcal{L}_{x}^{-1}$  and  $\mathcal{L}_{x}=\mathcal{R}_{x}^{-1}$ for all $x\in X$.
	
	By \cite[Theorem 2.35]{gim1}, every square-free solution is cyclic and satisfies {\bf lri}.

	Let $(X,r_X)$ and $(Y, r_Y)$ be two solutions of the YBE. A map $\phi:X\ra Y$ is a {homomorphism of solutions}, if it satisfies the equality
	$$(\phi\times \phi) r_X=r_Y (\phi\times \phi).$$
	If moreove, $\phi$ is a bijection, it is called an \it{isomorphism}. An isomorphism of the solution $(X,r)$ onto itself is called an \it{automorphism}. The group of automorphisms of $(X,r)$ is denoted by $\Aut(X,r)$. It is clear that $\Aut(X,r)$ is a subgroup of the symmetric group on $X$.
	
	The following useful remark  is straightforward from the definition.
	\begin{rem}
		\begin{enumerate}\label{rem auto}
			\item Let $(X, r_X)$ and $(Y, r_Y)$ be solutions. A map $\phi : X \longrightarrow Y$ is a homomorphism of solutions if and only if
			$\phi   \mc{L}_x = \mc{L}_{\phi(x)}  \phi$ and
			$\phi   \mc{R}_x = \mc{R}_{\phi(x)} \phi$ for all $x \in X$.
			\item  If both $(X, r_X)$ and $(Y, r_Y)$ satisfy the Condition \textbf{lri}, then $\phi$ is a homomorphism of solutions if and only if
			$\phi  \mc{L}_x = \mc{L}_{\phi(x)}   \phi$  for all $x \in X$.
			Especially,  if $(X, r)$ obeys the Condition \textbf{lri} (in particular, if $(X, r)$ is a square-free solution),  then $\sigma \in \Sym(X)$ is an automorphism of $(X, r_X)$ if and only if
			\begin{equation}\label{eq:auto_condition}
				\sigma   \mc{L}_x  \sigma^{-1} = \mc{L}_{\sigma(x)},
			\end{equation}
			for all $x \in X$.

		\end{enumerate}
		
	\end{rem}
	
	\begin{lem}\label{lem auto}
		Let $(X, r)$  be a square-free solution. A bijection $\phi$ on $X$ is an  automorphism of the solution $(X, r)$ if and only if it induces an automorphism of the YB algebra $\mc{A}(X,r)$.
	\end{lem}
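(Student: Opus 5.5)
The plan is to prove both implications by comparing the defining relations $\bold{R}(r)$ of $\mc{A}(X,r)$ with their images under $\phi$. We interpret ``$\phi$ induces an automorphism of $\mc{A}(X,r)$'' in the standard way. The bijection $\phi$ extends uniquely to an automorphism $\tilde\phi$ of the free algebra $\kk\langle X\rangle$, and $\phi$ induces an automorphism of $\mc{A}(X,r)$ exactly when $\tilde\phi$ maps the ideal $(\bold{R}(r))$ onto itself. Since $\tilde\phi$ preserves degree, it suffices that $\tilde\phi$ maps the degree-two component $V=\operatorname{span}\bold{R}(r)\subseteq \kk X\otimes\kk X$ into $V$; equality then follows because $\tilde\phi$ is injective and $V$ is finite dimensional. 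Throughout I use that $(X,r)$ is square-free and involutive, so by Remark~\ref{rem0} and Lemma~\ref{lem skew} the relations are exactly $xy-{}^xy\,x^y$ for $x\neq y$. These come in pairs identified under $r$, and each monomial $xy$ with $x\ne y$ occurs in exactly one relation.

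($\Rightarrow$) Suppose $\phi\in\Aut(X,r)$. By definition $(\phi\times\phi)r=r(\phi\times\phi)$, so if $r(x,y)=(y',x')$ with $(x,y)\neq(y',x')$, then $r(\phi x,\phi y)=(\phi y',\phi x')$, and this pair is again distinct from $(\phi x,\phi y)$ because $\phi$ is injective. Hence $\tilde\phi(xy-y'x')=\phi(x)\phi(y)-\phi(y')\phi(x')\in\bold{R}(r)$. Thus $\tilde\phi$ maps the generating set $\bold{R}(r)$ into $\bold{R}(r)$ and descends to an endomorphism of $\mc{A}(X,r)$. Doing the same for $\phi^{-1}$, which is also an automorphism of the solution, gives the inverse.

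($\Leftarrow$) Suppose $\tilde\phi(V)=V$. Fix $x\neq y$ and write $r(x,y)=(y',x')$, so that $xy=y'x'$ holds in $\mc{A}(X,r)$. Applying $\phi$ gives $\phi(x)\phi(y)=\phi(y')\phi(x')$ in $\mc{A}(X,r)$, with $\phi(x)\ne\phi(y)$. By the last statement of Remark~\ref{rem0}, either $(\phi x,\phi y)=(\phi y',\phi x')$ or $r(\phi x,\phi y)=(\phi y',\phi x')$. The first alternative would force $(x,y)=(y',x')$, so $xy=y'x'$ would not have been a genuine relation. In the square-free case this can happen for $x\ne y$ (the pair $(x,y)$ is then fixed by $r$), but then both sides are literally equal and the required equality $r(\phi x,\phi y)=(\phi x,\phi y)$ must be checked separately, as follows. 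If $r(x,y)=(x,y)$ with $x\neq y$, then the monomial $xy$ occurs in no relation, so $xy$ is nonzero and not equal to any other monomial in degree two of $\mc{A}(X,r)$. The same then holds for $\phi(x)\phi(y)$, because $\tilde\phi$ induces a bijection on the monomial basis in degree two modulo $V$. So $(\phi x,\phi y)$ is fixed by $r$ as well. For $x=y$ the equality $r(\phi x,\phi x)=(\phi x,\phi x)$ holds by square-freeness. Combining all cases gives $(\phi\times\phi)r=r(\phi\times\phi)$.

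The main obstacle is making the converse rigorous. The key input is that the degree-two part of $\mc{A}(X,r)$ has the classes of the monomials partitioned exactly into the $r$-orbits on $X\times X$, which is what the last sentence of Remark~\ref{rem0} records. I would state this cleanly as a lemma: two monomials $xy$ and $uv$ are equal in $\mc{A}(X,r)_2$ if and only if they lie in the same $r$-orbit. This holds because $\bold{R}(r)$ is spanned by differences within orbits of size two, so $\mc{A}_2\cong \kk[(X\times X)/r]$. Once this lemma is available, $\phi$ inducing an automorphism means $\phi\times\phi$ maps $r$-orbits to $r$-orbits, and since $r$ is involutive this is precisely $(\phi\times\phi)r=r(\phi\times\phi)$. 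Remark~\ref{rem auto}(2) can alternatively be used to phrase the conclusion as $\phi\mc{L}_x\phi^{-1}=\mc{L}_{\phi(x)}$.
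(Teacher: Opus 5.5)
Your proposal is correct and follows essentially the paper's argument: the forward direction transports the defining relations along $\phi$, and the converse applies $\phi$ to $xy={}^xy\,x^y$ and invokes the dichotomy in the last sentence of Remark~\ref{rem0}. The separate case $r(x,y)=(x,y)$ with $x\neq y$ that you say ``can happen'' is in fact vacuous, since square-freeness gives ${}^xx=x$ and bijectivity of $\mc{L}_x$ then forces ${}^xy\neq x$ for $y\neq x$; this is exactly how the paper, via Lemma~\ref{lem skew}, rules out the trivial alternative, and it also contradicts your own earlier remark, although your treatment of that case does no harm.
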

	\proof It is clear that if $\phi$  is an  automorphism of the solution $(X, r)$, then it induces an automorphism of the  algebra $\mc{A}(X,r)$.
	
	Conversely, suppose the bijection $\phi$ induces an   automorphism of the YB algebra ${A}(X,r)$. The solution $(X, r)$  is square-free, then for any $x\in X$,
	$$r(\phi(x),\phi(x))=(\phi(x),\phi(x))=({}^{\phi(x)}\phi(x),\phi(x)^{\phi(x)})$$
	holds. 	 Let $x,y\in X$ and $x\neq y$. In  the algebra  $\mc{A}(X,r)$, we have $xy=({}^xy )(x^y)$. Hence
	$$	\begin{array}{rcl}
		\phi(x)\phi(y)	&=&\phi(xy)\\
		&=&\phi(({}^xy )(x^y))\\
		&=&\phi({}^xy )\phi(x^y)
	\end{array}$$
	By Lemma \ref{lem skew}, the semigroup  $S(X;\bold{R}(r))$ is of skew-polynomial type. So $x\neq {}^xy$ and $y^x\neq y$, since $x\neq y$.  The map $\phi$ is a bijection, so $\phi(x)\neq \phi({}^xy)$ and $\phi(y^x)\neq \phi(y)$. With Remark \ref{rem0}, we obtain that $r(\phi(x),\phi(y))=(\phi({}^xy ), \phi(x^y))$. Therefore,
	$\phi({}x)={}^{\phi(x)}\phi(y)$ and $\phi(y^x)=\phi(y)^{\phi(x)}$. In conclusion,  the bijection $\phi$ is an   automorphism of the solution $(X,r)$. \qed

	\subsection{Artin-Schelter regular algebras}
	
	Unless explicitly indicated otherwise, by a \it{graded algebra} we mean an $\mathbb{N}$-graded  algebra. A graded algebra  $A=\op_{i\in \NN} A_{i}$ is called  \it{connected} if  $A_0=\kk$, and is called
	\it{locally finite} if each $A_i$ is  finite-dimensional.
	The graded algebras we considered in this paper are all locally finite.
	
	The \it{Hilbert series} of  a graded algebra $A$  is defined to be
	$$H_{A}(t)=\sum_{i =0}^{\infty}\left(\dim A_{i}\right) t^{i}.$$
	For a graded module $M =\op_{i\in \ZZ}M_i$, we write $M(n)$ for the $n$-th shift of $M$.

	\begin{defn}
		A connected graded algebra  $A$  is called \it{Artin-Schelter (AS-for short) Gorenstein}   if
		\begin{enumerate}
			\item[(i)] $A$  has graded injective dimension $d<\infty$ on the left and on the right,
			\item[(ii)] $\Ext_{A}^{i}(\kk, A)=\Ext_{A^{op}}^{i}(\kk, A)=0$,
			\item[(iii)] $\Ext_{A}^{d}(\kk, A)\cong \kk(l)$ and $\Ext_{A^{op}}^{d}(\kk, A)\cong \kk(l)$ for some $l\in \ZZ$.
		\end{enumerate}
		If in addition,
		\begin{enumerate}
			\item[(iv)] $A$  has finite (graded) global dimension  $d$,
			\item[(v)] $A$ has finite Gelfand-Kirillov (GK) dimension, meaning that the integer-valued function  $i \mapsto   \dim_{\kk} A_{i}$  is bounded by a polynomial in  $i$,
		\end{enumerate}
		then $A$ is called \it{AS-regular} of dimension $d$. The number $l$ in the condition (iii) is called the \it{AS index} of $A$.
	\end{defn}

	The following Lemma can be extracted from \cite{gi1,gi2,giv,ru} (see also \cite[Remark 3.1]{gi}).
	\begin{lem}
		Let $(X,r)$ be a quantum binomial set and $\mc{A}=\mc{A}(X,r)$ be the associated quadratic algebra. The following conditions are equivalent.
		\begin{enumerate}
			\item $\mc{A}$ is an AS-regular PBW algebra.
			\item $(X,r)$ is a solution of YBE.
		\end{enumerate}
		Each of these conditions implies that $A$ is Koszul and a Noetherian domain.
	\end{lem}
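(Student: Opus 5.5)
The plan is to prove the two implications separately, using the combinatorial description of $\bold{R}(r)$ from Remark \ref{rem0} and Lemma \ref{lem skew}. The key intermediate object is the monoid of skew-polynomial type. Throughout, $\mc{A}=\mc{A}(X,r)$ has $n=|X|$ generators. Because $(X,r)$ is involutive and square-free, $\bold{R}(r)$ consists of exactly $\binom{n}{2}$ binomial relations, and each monomial $xy$ with $x\neq y$ occurs in exactly one of them.

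\emph{(2)$\Rightarrow$(1).} Assume $(X,r)$ is a solution. By Lemma \ref{lem skew}, I fix an ordering $x_1<\dots<x_n$ for which $S(X,r)$ is of skew-polynomial type. With the deg-lex order, the leading words of the relations are exactly the monomials $x_jx_i$ with $j>i$. Condition (iii) of the definition says the only overlaps, $x_kx_jx_i$ with $k>j>i$, resolve. By Bergman's diamond lemma, $\bold{R}(r)$ is then a quadratic Gr\"obner basis. Hence $\mc{A}$ is PBW with basis the ordered monomials $x_{i_1}\cdots x_{i_m}$, $i_1\le\dots\le i_m$, and $H_{\mc{A}}(t)=(1-t)^{-n}$. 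In particular the GK dimension is finite, and Priddy's theorem gives Koszulity. For AS-regularity, I would pass to the Koszul dual $\mc{A}^!$. It is again PBW, with Hilbert series $(1+t)^n$, spanned by the strictly increasing monomials. I would check, using non-degeneracy and the binomial form of the relations, that the top component is one-dimensional and the multiplication pairing $\mc{A}^!_i\times\mc{A}^!_{n-i}\to\mc{A}^!_n$ is nondegenerate, so $\mc{A}^!$ is Frobenius. Then Smith's criterion makes the Koszul algebra $\mc{A}$ AS-regular of dimension $n$. Alternatively, and more safely, I would cite Tate--Van den Bergh: algebras of I-type are AS-regular Noetherian domains. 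Noetherianity also follows directly, since a PBW algebra with such a normal-form basis has an associated graded algebra that is a skew polynomial-type monomial algebra. For the domain property I would use Tate--Van den Bergh, or equivalently that $S(X,r)$ embeds in the torsion-free group $G(X,r)$ with the appropriate orderability.

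\emph{(1)$\Rightarrow$(2).} Assume $\mc{A}$ is AS-regular and PBW with respect to some ordering of $X$. First I pin down the Hilbert series. A PBW algebra whose $\binom{n}{2}$ leading words involve no squares has $\dim\mc{A}_2=\binom{n+1}{2}$. I would show that the leading words must be exactly $\{x_jx_i : j>i\}$: the leading words are distinct and cover the non-square monomials pairwise via the binomial relations. From this it follows that the normal words are the ordered monomials and $H_{\mc{A}}(t)=(1-t)^{-n}$. Consequently $\dim\mc{A}_3=\binom{n+2}{3}$, which means no ambiguity $x_kx_jx_i$ yields a new relation, so condition (iii) holds. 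I still need condition (ii), i.e.\ $j>i'$ for each relation $x_jx_i=x_{i'}x_{j'}$. It should follow from the fact that the leading word of each relation is its deg-lex larger side. Hence $S(X,r)$ is of skew-polynomial type. Finally I invoke Gateva-Ivanova's theorem \cite{gi5}: a quantum binomial set whose semigroup is of skew-polynomial type defines an involutive solution of the YBE.

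The main obstacle is the first step of (1)$\Rightarrow$(2): showing that \emph{any} PBW ordering making $\mc{A}$ AS-regular forces the leading words to be precisely the $x_jx_i$ with $j>i$. I would try to exclude the alternatives by counting. AS-regularity plus Koszulity forces $H_{\mc{A}}(t)^{-1}$ to be a polynomial with palindromic symmetry. A different leading-word set would produce a monomial algebra with the wrong growth, or with an obstruction to the Gorenstein symmetry.
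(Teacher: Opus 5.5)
\textbf{Direction (2)$\Rightarrow$(1).} This is an acceptable reconstruction of what the paper only cites; the paper gives no argument, just the references \cite{gi1,gi2,giv,ru}. You use Lemma~\ref{lem skew}, the diamond lemma, Priddy's theorem, and the I-type results of Gateva-Ivanova--Van den Bergh and Tate--Van den Bergh for AS-regularity, noetherianity and the domain property.

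Two side remarks there are wrong and should be dropped. First, the monomial algebra $\kk\langle X\rangle/(x_jx_i : j>i)$ is \emph{not} noetherian. Already $\kk\langle x,y\rangle/(yx)$ fails: the left ideal generated by $xy, xy^2, xy^3,\dots$ is not finitely generated. So noetherianity does not ``follow directly'' from the Gr\"obner degeneration. Second, an orderability argument for $G(X,r)$ is not available for arbitrary square-free solutions, so you need the cited results for the domain property.

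\textbf{Direction (1)$\Rightarrow$(2): the genuine gap.} It sits at the step you flag yourself. The facts you invoke do not force the leading-word set $N$ to be $\{x_jx_i : j>i\}$: leading words are distinct, squares never occur, and each non-square word lies in exactly one relation. Non-degeneracy fixes only how many leading words start with each letter. The relation containing $xy$ ($y\neq x$) is $xy={}^xy\,x^y$ with ${}^xy\neq x$, so $xy$ is leading iff ${}^xy<x$. Since $\mc{L}_x$ permutes $X\setminus\{x\}$, exactly $k-1$ leading words begin with $x_k$. Which ones they are is not determined: a quantum binomial set can contain a relation such as $x_2x_4=x_1x_3$, whose leading word $x_2x_4$ is ascending. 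Palindromicity of $H_{\mc{A}}(t)^{-1}$ is also not the tool that settles this.

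The missing idea is to use finite GK-dimension through the monomial algebra $A_0=\kk\langle X\rangle/(N)$. It has the same Hilbert series as $\mc{A}$ because $\mc{A}$ is PBW.
\begin{itemize}
\item Suppose that for some $x\neq y$ neither $xy$ nor $yx$ lies in $N$. Since $xx,yy\notin N$, every word in $x,y$ would be normal, giving $\dim\mc{A}_m\ge 2^m$. This contradicts finite GK-dimension, so for every pair at least one of $xy,yx$ is a leading word.
\item Now induct on $k$. By the induction hypothesis every $x_lx_k$ with $l<k$ is normal, so $x_kx_1,\dots,x_kx_{k-1}\in N$.
\item By the count above, these are all the leading words starting with $x_k$.
\end{itemize}
Hence $N=\{x_jx_i : j>i\}$. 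From there your derivation of conditions (ii) and (iii) goes through, as does your appeal to the fact that skew-polynomial type implies the YBE. The cited work \cite{gi2} reaches the same point via its characterization of $n$-generated PBW algebras of finite global dimension and polynomial growth by $H_A(t)=(1-t)^{-n}$.
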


	\subsection{Calabi-Yau algebras}
	For a graded algebra $A$, we denote by $A^{op}$ the opposite algebra of $A$ and $A^e$ the enveloping algebra $A\ot A^{op}$ of $A$. An $A$-bimodule can be identified with  a  left (right) $A^e$-module. We use $\GrAut(A)$ to denote the group of graded algebra automorphisms of $A$.
	
	For $\mu, \nu\in \GrAut(A)$  and  a graded left $A$-module $M$, we write ${}_\mu M$ for the   graded $A$-module  defined by ${}_\mu M\cong M$ as vector spaces, and
	$a\cdot m =\mu(a)m$, for all $a\in A$ and $m\in M$. Similarly, for a graded right  $A$-module $N$, we have the notion of $N_\mu$; and for a graded bimodule $L$, we have the notion ${}_\mu L_\nu$. If $\mu$ or $\nu$ is the identity, then we will omit it.  It is well-known that   $A_\mu\cong A$ as $A$-bimodules if and only
	if $\mu$ is an inner automorphism of $A$.
	
	Let $\mu\in \GrAut(A)$ and let $M,N$ be graded $A$-modules.
	A $\Bbbk$-linear graded map $f: M \rightarrow N$ is called a $\mu$-linear homomorphism if it is a homomorphism of graded $A$-modules $f: M \rightarrow {}_{\mu}N$, i.e.\ if $f(am)=\mu(a)f(m)$ for all $a\in A$ and $m\in M$.
	
	\begin{defn}
		A graded algebra $A$ is called  a  \it{twisted Calabi-Yau (CY for short) algebra  of dimension
			$d$} if
		\begin{enumerate}
			\item$A$ is \it{homologically smooth}, that is,  $A$ has a bounded resolution of finitely
			generated projective $A$-bimodules,
			\item  There is an isomorphism
			$$\Ext_{A^e}^i(A,A^e)\cong\begin{cases}0,& i\neq d,
				\\A_\mu(l),&i=d\end{cases}$$
			of graded $A^e$-modules for some $l\in \ZZ$ and some  $\mu\in \GrAut(A)$.
		\end{enumerate}
		If such an automorphism $\mu$ exists, it is unique up to an inner automorphism and is called the \it{Nakayama automorphism} of $A$. The integer  $l$ is also called the \it{AS-index} of $A$. A \it{Calabi-Yau} algebra is a twisted Calabi-Yau algebra whose Nakayama automorphism is an inner automorphism.
		
	\end{defn}
	
	\begin{rem}
		An AS-regular algebra must be twisted CY. Actually, in the definition of an AS-regular algebra, the conditon that requires finite GK dimension is sometimes omitted (e.g. \cite{ms,rrz}). If this condition is omitted, then a connected graded algebra is twisted CY of dimension $d$ if and only if it is AS-regular of dimension $d$. In that case, the AS indexes are the same  \cite[Lemma 1.2]{rrz}. The \it{Nakayama automorphism} of an AS-regular algebra is defined as its Nakayama automorphism as a twisted CY algebra.
	\end{rem}


	\section{The distributive Yang-Baxter algebras}
	The aim of this section is to determine the algebraic structure of the
Yang-Baxter algebra associated with a distributive solution. We begin by recalling the definition of distributive solutions.	
	\begin{defn}
		A solution $(X, r)$ of YBE is called \emph{distributive} if for all $x, y, z \in X$,
		\[
		{}^{x}({}^{y}z) = {}^{{}^{x}y}({}^{x}z).
		\]
		Equivalently, the left actions satisfy
		\[
		\mathcal{L}_x \mathcal{L}_y = \mathcal{L}_{{}^{x}y} \mathcal{L}_x	\]  for all $ x, y \in X$.
		
		A YB algebra is called \it{distributive}, if it is obtained by a distributive solution.
	\end{defn}
	
	It is well known that there is a one-to-one correspondence between solutions of the YBE and involutive biracks, algebraic structures equipped with two binary operations, each forming a (one-sided) quasigroup, that satisfy certain compatibility identities (see e.g. \cite[Lemma 1.2]{de}). Consequently, by \cite[Corollary 5.7]{jpz}, distributive solutions admit an equivalent characterization in terms of right actions.
	
	\begin{lem}
		Let $(X,r)$ be a solution. The following conditions are equivalent:
		\begin{enumerate}
			\item (X,r) is distributive;
			\item For every $x,y,z\in X$:
			\[(z^y)^x=(z^x)^{(y^x)} \quad ({equivalently},  \mc{R}_x\mc{R}_y=\mc{R}_{y^x}\mc{R}_x).\]
		\end{enumerate}
	\end{lem}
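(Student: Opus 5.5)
The plan is to derive the right-hand identity from the left-hand one, using two facts available for a solution in the sense of our Convention (non-degenerate and involutive). First, involutivity gives $r^2=\id$, which unpacks to
\[
{}^{({}^xy)}(x^y)=x, \qquad ({}^xy)^{(x^y)}=y
\]
for all $x,y\in X$. Second, the conditions \textbf{l1}, \textbf{r1} and \textbf{lr3} hold, with \textbf{l1} in operator form reading $\mc{L}_x\mc{L}_y=\mc{L}_{{}^xy}\mc{L}_{x^y}$. Comparing \textbf{l1} with the distributive law $\mc{L}_x\mc{L}_y=\mc{L}_{{}^xy}\mc{L}_x$ and cancelling the invertible map $\mc{L}_{{}^xy}$, I obtain
\[
\mc{L}_{x^y}=\mc{L}_x \quad\text{for all } x,y\in X.
\]
Conversely, this identity together with \textbf{l1} gives distributivity. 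So (1) is equivalent to the statement that $\mc{L}$ is constant on the orbits of the right action. Symmetrically, using \textbf{r1} in the form $\mc{R}_z\mc{R}_y=\mc{R}_{y^z}\mc{R}_{{}^yz}$, condition (2) is equivalent to $\mc{R}_{{}^yz}=\mc{R}_z$ for all $y,z$, i.e.\ $\mc{R}$ is constant on the orbits of the left action.

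It therefore suffices to show that $\mc{L}_{x^y}=\mc{L}_x$ for all $x,y$ if and only if $\mc{R}_{{}^yz}=\mc{R}_z$ for all $y,z$. For (1)$\Rightarrow$(2), fix $y,z$ and put $x={}^yz$. Involutivity lets me write $z={}^{u}v$-type expressions: by non-degeneracy, $(y,z)=r(a,b)$ for a unique pair $(a,b)$, so that $y={}^ab$ and $z=a^b$. Under hypothesis (1), $\mc{L}_z=\mc{L}_{a^b}=\mc{L}_a$. I then plan to express $\mc{R}_w$ through left actions via involutivity. The identity ${}^{({}^xy)}(x^y)=x$ says that $x^y=\mc{L}^{-1}_{{}^xy}(x)$, that is, $\mc{R}_y(x)=\mc{L}^{-1}_{\mc{L}_x(y)}(x)$. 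Consequently $\mc{R}_w$ depends on $w$ only through the family of maps $x\mapsto \mc{L}_x(w)$. Substituting $w={}^yz$ gives
\[
\mc{R}_{{}^yz}(x)=\mc{L}^{-1}_{{}^x({}^yz)}(x), \qquad \mc{R}_z(x)=\mc{L}^{-1}_{{}^xz}(x).
\]
Distributivity gives ${}^x({}^yz)={}^{{}^xy}({}^xz)$. By the reduction, $\mc{L}$ is constant on right orbits, and I expect \textbf{lr3} to show that $\mc{L}_{{}^{u}w}$ and $\mc{L}_w$ also agree after the appropriate conjugation. Then the two elements ${}^{{}^xy}({}^xz)$ and ${}^xz$ have the same $\mc{L}$, which would yield $\mc{R}_{{}^yz}=\mc{R}_z$. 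The direction (2)$\Rightarrow$(1) follows by the mirror argument: apply the same reasoning to the opposite solution $(X,\tau r\tau)$, where $\tau$ is the flip, which interchanges the roles of $\mc{L}$ and $\mc{R}$.

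I expect the main obstacle to be the middle step: showing $\mc{L}_{{}^{u}w}=\mc{L}_w$ (up to what is needed) from $\mc{L}_{x^y}=\mc{L}_x$. If the direct manipulation with \textbf{lr3} fails, my fallback is the one the text suggests. I would translate the solution into its associated involutive birack, as in \cite[Lemma 1.2]{de}, where distributivity of one operation is known to be equivalent to distributivity of the other, and cite \cite[Corollary 5.7]{jpz} for that equivalence.
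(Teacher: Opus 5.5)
Your reductions are sound. Comparing \textbf{l1} with the distributive law shows that (1) is equivalent to $\mathcal{L}_{x^y}=\mathcal{L}_x$ for all $x,y$. Comparing \textbf{r1} with (2) shows that (2) is equivalent to $\mathcal{R}_{{}^yz}=\mathcal{R}_z$ for all $y,z$. The formula $\mathcal{R}_w(x)=\mathcal{L}^{-1}_{{}^xw}(x)$ follows from $r^2=\id$. Finally, $\tau r\tau$ is again a non-degenerate involutive solution whose left and right actions are $\mathcal{R}$ and $\mathcal{L}$, so (2)$\Rightarrow$(1) really is the mirror image of (1)$\Rightarrow$(2).

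However, the step everything hinges on, $\mathcal{L}_{{}^uw}=\mathcal{L}_w$, is left as an expectation, and \textbf{lr3} (or some ``conjugation'') is not the tool that delivers it. As written, your direct argument is incomplete. The missing observation is the second identity you already extracted from involutivity: $w=({}^uw)^{(u^w)}$. Applying $\mathcal{L}_{s^t}=\mathcal{L}_s$ with $s={}^uw$ and $t=u^w$ gives $\mathcal{L}_w=\mathcal{L}_{{}^uw}$ exactly. Then
\[
\mathcal{R}_{{}^yz}(x)=\mathcal{L}^{-1}_{{}^x({}^yz)}(x)=\mathcal{L}^{-1}_{z}(x)=\mathcal{L}^{-1}_{{}^xz}(x)=\mathcal{R}_z(x),
\]
with no need for the distributive rewriting of ${}^x({}^yz)$. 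The detour through $(y,z)=r(a,b)$ can be dropped as well.

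With this line inserted, you have a complete, self-contained proof that differs from the paper's treatment. The paper gives no argument of its own: it passes to involutive biracks via \cite[Lemma 1.2]{de} and quotes \cite[Corollary 5.7]{jpz}, which is precisely your fallback. The citation is shorter, but your route is elementary and yields more. It shows that every distributive solution satisfies $\mathcal{R}_z=\mathcal{L}_z^{-1}$, i.e.\ condition \textbf{lri}, without assuming square-freeness. Combined with $\mathcal{L}_{{}^xy}=\mathcal{L}_x\mathcal{L}_y\mathcal{L}_x^{-1}$, it also shows that the maps $\mathcal{L}_x$ pairwise commute, so $\mathcal{G}(X,r)$ is abelian. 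This matches the description of distributive solutions by trivial affine meshes, where every $\mathcal{L}_x$ acts by translations.
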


	\begin{lem}\label{lem action}
		Let $(X,r)$ be a square-free solution. Then the followings are equivalent:
		\begin{enumerate}
			\item The solution $(X,r)$ is distributive;
			\item For each $x\in X$, $\mc{L}_x$ is an automorphism of the solution $(X,r)$;
			\item The permutation group $\mc{G}(X,r)$ acts on the YB algebra $\mc{A}(X,r)$.
		\end{enumerate}	
	\end{lem}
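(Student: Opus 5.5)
I would prove the chain of equivalences (1)$\Leftrightarrow$(2)$\Leftrightarrow$(3), relying on Remark \ref{rem auto} and Lemma \ref{lem auto}.

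For (1)$\Leftrightarrow$(2): a square-free solution satisfies \textbf{lri} (by \cite[Theorem 2.35]{gim1}), so Remark \ref{rem auto}(2) applies. It says a permutation $\sigma\in\Sym(X)$ is an automorphism of $(X,r)$ if and only if $\sigma\mc{L}_y\sigma^{-1}=\mc{L}_{\sigma(y)}$ for all $y\in X$. Take $\sigma=\mc{L}_x$, which is a bijection by non-degeneracy. The condition becomes $\mc{L}_x\mc{L}_y\mc{L}_x^{-1}=\mc{L}_{{}^xy}$ for all $y$, which is $\mc{L}_x\mc{L}_y=\mc{L}_{{}^xy}\mc{L}_x$. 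Requiring this for every $x$ is exactly the operator form of distributivity. So (1) and (2) are the same statement once Remark \ref{rem auto} is in hand.

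For (2)$\Leftrightarrow$(3), I read ``$\mc{G}(X,r)$ acts on $\mc{A}(X,r)$'' as: each $g\in\mc{G}(X,r)\subseteq\Sym(X)$, extended to an algebra map of $\kk\langle X\rangle$, descends to an automorphism of $\mc{A}(X,r)$.

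(2)$\Rightarrow$(3): $\Aut(X,r)$ is a subgroup of $\Sym(X)$ containing every generator $\mc{L}_x$, so it contains $\mc{G}(X,r)$. By the easy direction of Lemma \ref{lem auto}, each element of $\Aut(X,r)$ induces an algebra automorphism of $\mc{A}(X,r)$. Concretely, an automorphism $\phi$ sends the relation $xy-({}^xy)(x^y)$ to $\phi(x)\phi(y)-{}^{\phi(x)}\phi(y)\,\phi(x)^{\phi(y)}$, which is again a defining relation. The assignment $g\mapsto$ (induced automorphism) is multiplicative, because both sides are determined by their values on the generators $X$. Hence it is a group action.

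(3)$\Rightarrow$(2): if $\mc{G}(X,r)$ acts, then in particular each bijection $\mc{L}_x$ induces an automorphism of $\mc{A}(X,r)$. The converse direction of Lemma \ref{lem auto}, which is where square-freeness and the skew-polynomial structure from Lemma \ref{lem skew} enter, then shows $\mc{L}_x\in\Aut(X,r)$.

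No step looks like a real obstacle. The only care needed is in fixing the meaning of ``acts'' and in checking that Remark \ref{rem auto}(2) really applies, which requires confirming \textbf{lri} for square-free solutions via \cite{gim1}.
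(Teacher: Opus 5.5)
Your proposal is correct and follows the paper's own proof. The paper also gets (1)$\Leftrightarrow$(2) by applying \textbf{lri} and Remark~\ref{rem auto}(2) to $\sigma=\mc{L}_x$, and (2)$\Leftrightarrow$(3) from Lemma~\ref{lem auto} plus the fact that $\mc{G}(X,r)$ is generated by the $\mc{L}_x$; your remarks on what ``acts'' means and why the induced assignment is multiplicative only spell out what the paper leaves implicit.
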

	\proof  The solution $(X,r)$ is square-free. Therefore,  it satisfies the Condition {\bf lri}. By Remark \ref{rem auto}, for each $x\in X$, $\mc{L}_x$ is an automorphism of the solution $(X,r)$ if and only if $\mc{L}_x \mc{L}_y=\mc{L}_{\mc{L}_x(y)}\mc{L}_x$ for any $y\in X$. Consequently, (1) and (2) are equivalent.

	The  permutation group $\mc{G}(X, r)$ is generated by $\{\mc{L}_x\mid x\in X\}$. The equivalence of (2) and (3) follows from Lemma \ref{lem auto}.  \qed

	\subsection{Trivial affine meshes}
	Every distributive solution $(X, r)$ can be constructed by a trivial affine mesh.
	\begin{defn}
		A \emph{trivial affine mesh} over a non-empty set $I$ is the pair
		$$\mathcal{T}=\left(\left(G_{i}\right)_{i \in I},\left(d_{i,j}\right)_{i, j \in I}\right),$$
		where $G_{i}$ are abelian groups and $d_{i,j} \in G_{j}$ are constants such that $G_{j}=\left\langle\left\{d_{i,j} \mid i \in I\right\}\right\rangle$ for every $j \in I$. If $I$ is a finite set, then a trivial affine mesh is displayed as a pair $\left(\left(G_{i}\right)_{i \in I},D\right)$, where $D=(d_{i,j})_{i,j\in I}$ is a $|I|\times |I|$ matrix.
		
		Referring to \cite{jpz}, we call a trivial affine mesh $\left(\left(G_{i}\right)_{i\in I}, (d_{i,j})_{i,j\in I}\right)$ \emph{idempotent} if $d_{i,i}=0$ for all $i\in I$.
	\end{defn}
	The following lemma is \cite[Theorem 7.8]{jpz}. Note that we only consider finite solutions.
	
	\begin{lem}\label{L T}
		Every distributive solution $(X, r)$ is determined by a trivial affine mesh. Specifically, the set $X$ decomposes as a disjoint union of abelian groups
		$X=\bigcup_{j=1}^pG_{j},$ where each $G_j$ is generated by the elements $\{d_{i,j} \mid 1 \leqslant i\leqslant p\}$, i.e.,
		$G_j = \left\langle d_{i,j} \mid 1 \leqslant i \leqslant p \right\rangle$. 	
		The map $r : X \times X \to X \times X$ is then given by
		$$
		r(x, y) = (y + d_{i,j},\; x - d_{j,i} ),
		$$
		for all $x \in G_i$ and $y \in G_j$.
	\end{lem}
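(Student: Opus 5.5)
The plan is to reconstruct the mesh from the orbits of the permutation group $\mc{G}(X,r)$ acting on $X$. The key step is to show that $\mc{L}$ is constant on these orbits. Once we know this, $\mc{G}(X,r)$ is abelian, and every orbit becomes a torsor over an abelian quotient of $\mc{G}(X,r)$. The maps $\mc{L}_x$ then act on each orbit as translations.

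\emph{Step 1: $\mc{L}_{{}^zx}=\mc{L}_x$ for all $x,z\in X$.} Condition $\mathbf{l1}$ reads $\mc{L}_x\mc{L}_y=\mc{L}_{{}^xy}\mc{L}_{x^y}$. Distributivity reads $\mc{L}_x\mc{L}_y=\mc{L}_{{}^xy}\mc{L}_x$. Comparing the two gives $\mc{L}_{x^y}=\mc{L}_x$ for all $x,y$. Since $r$ is involutive, $r({}^xy,x^y)=(x,y)$, so ${}^{{}^xy}(x^y)=x$, that is, $x^y=\mc{L}_{{}^xy}^{-1}(x)$. By non-degeneracy, as $y$ runs over $X$ the element $z={}^xy$ runs over all of $X$. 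Hence $\mc{L}_{\mc{L}_z^{-1}(x)}=\mc{L}_x$ for all $x,z$, and therefore $\mc{L}_{\mc{L}_z(x)}=\mc{L}_x$. Substituting this into distributivity gives $\mc{L}_x\mc{L}_y=\mc{L}_y\mc{L}_x$. So $\mc{G}=\mc{G}(X,r)$ is abelian, and $\mc{L}_x$ depends only on the $\mc{G}$-orbit of $x$. I expect this to be the only genuinely nontrivial step; everything afterwards is bookkeeping.

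\emph{Step 2: orbits as groups.} Let $X=X_1\sqcup\cdots\sqcup X_p$ be the decomposition into $\mc{G}$-orbits. Fix a base point $e_j\in X_j$ and let $S_j$ be its stabilizer. Because $\mc{G}$ is abelian and acts transitively on $X_j$, $S_j$ is the stabilizer of every point of $X_j$. Thus $G_j:=\mc{G}/S_j$ is an abelian group, and $gS_j\mapsto g(e_j)$ is a bijection $G_j\to X_j$; we identify $X_j$ with $G_j$ through it. By Step 1, all $x\in X_i$ give the same permutation $\mc{L}_{(i)}$. Define $d_{i,j}$ to be the class of $\mc{L}_{(i)}$ in $G_j$. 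Then, for $x\in X_i$ and $y\in X_j$, we have ${}^xy=y+d_{i,j}$ under the identification. Since $\mc{G}$ is generated by the $\mc{L}_{(i)}$, each $G_j$ is generated by $\{d_{i,j}\mid 1\le i\le p\}$, as the lemma requires.

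\emph{Step 3: the right action.} Take $x\in G_i$ and $y\in G_j$. Then ${}^xy\in G_j$, because orbits are $\mc{G}$-stable. The formula from Step 1 gives $x^y=\mc{L}_{{}^xy}^{-1}(x)=\mc{L}_{(j)}^{-1}(x)=x-d_{j,i}$. Therefore $r(x,y)=(y+d_{i,j},\,x-d_{j,i})$, which is the claimed form.

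For completeness one would also check the converse: any trivial affine mesh yields, via this formula, a distributive non-degenerate involutive solution. This is a direct verification using commutativity of the $G_j$. The lemma as stated only needs the direction proved above.
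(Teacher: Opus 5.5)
Your proof is correct, and it takes a different route from the paper. The paper does not prove this lemma; it imports it from \cite[Theorem 7.8]{jpz}, where the result comes out of the affine-mesh theory of multipermutation level-two solutions developed there. You instead prove it directly.

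The key step is your Step 1. Comparing $\mathbf{l1}$ with distributivity gives $\mathcal{L}_{x^y}=\mathcal{L}_x$. Involutivity gives $x^y=\mathcal{L}^{-1}_{{}^xy}(x)$, and together with bijectivity of $\mathcal{L}_x$ this upgrades the identity to $\mathcal{L}_{{}^zx}=\mathcal{L}_x$. It follows that $\mathcal{G}(X,r)$ is abelian and that $\mathcal{L}$ is constant on its orbits.

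The remaining bookkeeping is sound. In an abelian transitive action all stabilizers coincide, so each orbit is a regular $\mathcal{G}/S_j$-set. Translation by $d_{i,j}$ is exactly the action of $\mathcal{L}_{(i)}$, and the generation condition $G_j=\langle d_{i,j}\mid 1\le i\le p\rangle$ holds automatically. Finally, $x^y=\mathcal{L}_{(j)}^{-1}(x)=x-d_{j,i}$ gives the second coordinate of $r$. You also note correctly that the converse is not needed.

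The citation places the result inside a broader classification. Your argument is short, self-contained and does not use finiteness. It also identifies the $G_j$ canonically as the orbits of the abelian group $\mathcal{G}(X,r)$. That is the structure the paper later relies on when it lets $\mathcal{G}(X,r)$ act diagonally on the $y_{i,a}$ and generates $\overline{\mathcal{G}}$ by the commuting automorphisms $\overline{\mathcal{L}}_{x_{i,\mathbf{0}}}$.
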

	
	\begin{rem}
		Let $\mathcal{T}=\left(\left(G_{i}\right)_{1\le i\le p}, (d_{i,j})_{1\le i,j\le p}\right)$ be a trivial affine mesh. For the convenience of discussing the YB algebras, in the following, we write the solution determined by $\mathcal{T}$ as $(X,r)$, where \begin{equation}\label{eq s1}
			X=\{x_{i,a}\mid 1\le i\le p, a\in G_i\}
		\end{equation} and
		\begin{equation}\label{eq s2}
			r(x_{i,a},x_{j,b})=(x_{j,b+d_{i,j}}, x_{i,a-d_{j,i}}),
		\end{equation}
		for any $x_{i,a},x_{j,b}\in X$. The corresponding  YB  algebra is the algebra  $\mc{A} = \Bbbk\langle X\rangle/(\mathbf{R})$, where
		\[
		\mathbf{R} = \{x_{i,a}x_{j,b} - x_{j,b+d_{i,j}}x_{i,a-d_{j,i}} \mid 1 \le i, j \le p, a \in G_i, b \in G_j\}.
		\]
		
		It is easy to see that $(X,r)$ is square-free if and only if $\mathcal{T}$ is idempotent. Recall that each square-free solution gives rise to an AS-regular algebra. Hence, each idempotent trivial affine mesh  determines an AS-regular algebra.

	\end{rem}

	\begin{notation}\label{notation}
		Let $\mathcal{T}=\left(\left(G_{i}\right)_{1\le i\le p}, (d_{i,j})_{1\le i,j\le p}\right)$ be a trivial affine mesh.
		Each $G_i$, $1\le i\le p$, is a finite abelian group, then $G_i\cong \ZZ_{m_{i,1}}\times\ZZ_{m_{i,2}}\times\cdots \times\ZZ_{m_{i,s_i}}$ for some integers $m_{i,1}, m_{i,2}, \cdots, m_{i,s_i}$ satisfying $m_{i,k}\geqslant 2$ for $1\le k\le s_i$ and $m_{i,k}\mid m_{i,k+1}$ for $1\le k\le s_i-1$. We fix the following notations.
		\begin{itemize}
			\item 	We denote $|G_i|=N_i$. Then $N_i =  \prod _{k=1}^{s_i} m_{i,k}$.
			\item  The set $\left\{d_{j,i} \mid 1\leqslant j \leqslant p\right\}$ are elements in $G_i$, we write $$d_{j,i}=(d_{j,i,1},d_{j,i,2},\cdots,d_{j,i,s_i} ).$$
			\item   For $a=(a_1,a_2,\cdots,a_{s_i}),b=(b_1,b_2,\cdots,b_{s_i})\in G_i$, we define $a\cdot b=(a_1b_1,a_2b_2,\cdots,a_{s_i}b_{s_i})$.
			\item We set $\xi_{G_i}=\{\xi_{i,1},\xi_{i,2},\cdots,\xi_{i,s_i}\}$, where each $\xi_{i,k}$ is a primitive $m_{i,k}$-th root of unity. For $a=(a_1,a_2,\cdots,a_{s_i})$, $\xi_{G_i}^a$ is defined as  $$\xi_{G_i}^a=\xi_{i,1}^{a_1}\xi_{i,2}^{a_2}\cdots\xi_{i,{s_i}}^{a_{s_i}}$$ and $\xi_{G_i}^{-a}$ is defined as  $$\xi_{G_i}^{-a}=(\xi_{G_i}^a)^{-1}=\xi_{i,1}^{-a_1}\xi_{i,2}^{-a_2}\cdots\xi_{i,{s_i}}^{-a_{s_i}}$$ respectively.
			\item We always order the elements of $G_i$ linearly by lexicographic order. That is, for $a=(a_1,a_2,\cdots,a_{s_i}), b=(b_1,b_2,\cdots,b_{s_i})\in G_i$, $a<b$ if and only if
			there is $1\le k\le s_i$, such that
			$$a_i=b_i \text{ for  } 1\le i< k, \text{ and } a_k<b_k.$$
		\end{itemize}
	\end{notation}
	
	The following lemma is needed in the following.
	\begin{lem} Let $\mathcal{T}=\left(\left(G_{i}\right)_{1\le i\le p}, (d_{i,j})_{1\le i,j\le p}\right)$ be a trivial affine mesh. For $1\leqslant i,j\leqslant p$, we have
		\begin{equation}\label{eq xi}
			\prod_{a\in G_i}  \xi_{G_i}^{-a\cdot d_{j,i}}=(-1)^{\sum\limits_{k=1}^{s_i}(N_i-\widehat{N}_{i,k})d_{j,i,k}},
		\end{equation}	
		where $\widehat{N}_{i,k}=m_1\cdots m_{k-1}m_{k+1}\cdots m_{s_i}$.
	\end{lem}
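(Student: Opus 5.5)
The product factors over the coordinates, so I would reduce the identity to a sign computation for one root of unity at a time. Write $a=(a_1,\dots,a_{s_i})$ with $0\le a_k<m_{i,k}$. By definition,
$$\xi_{G_i}^{-a\cdot d_{j,i}}=\prod_{k=1}^{s_i}\xi_{i,k}^{-a_k d_{j,i,k}},$$
hence
$$\prod_{a\in G_i}\xi_{G_i}^{-a\cdot d_{j,i}}=\prod_{k=1}^{s_i}\Bigl(\prod_{a\in G_i}\xi_{i,k}^{-a_k}\Bigr)^{d_{j,i,k}}.$$
The expression $\xi_{i,k}^{-a_kd}$ depends only on the residue of $a_kd$ modulo $m_{i,k}$, so it is well defined. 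It is therefore enough to show, for each fixed $k$, that
$$\prod_{a\in G_i}\xi_{i,k}^{-a_k}=(-1)^{N_i-\widehat N_{i,k}}.$$

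For fixed $k$, I would split the product over $G_i$ by the value of the $k$-th coordinate. Each value $c\in\{0,\dots,m_{i,k}-1\}$ occurs for exactly $\widehat N_{i,k}=N_i/m_{i,k}$ elements $a$. Thus the inner product equals
$$\Bigl(\prod_{c=0}^{m-1}\xi^{-c}\Bigr)^{\widehat N},\qquad m=m_{i,k},\ \xi=\xi_{i,k},\ \widehat N=\widehat N_{i,k}.$$
The key step is the classical identity $\prod_{c=0}^{m-1}\xi^{c}=(-1)^{m-1}$ for a primitive $m$-th root of unity $\xi$. Two proofs are available. One uses the product of all roots of $t^m-1$, namely $(-1)^{m}\cdot(-1)=(-1)^{m+1}$. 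The other uses $\xi^{m(m-1)/2}$: this equals $1$ if $m$ is odd, and equals $(\xi^{m/2})^{m-1}=(-1)^{m-1}=-1$ if $m$ is even. Inverting does not change a sign, so
$$\Bigl(\prod_{c}\xi^{-c}\Bigr)^{\widehat N}=(-1)^{(m-1)\widehat N}=(-1)^{N_i-\widehat N_{i,k}},$$
since $m\widehat N=N_i$.

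Raising to the power $d_{j,i,k}$ and multiplying over $k$ gives exactly the right-hand side of \eqref{eq xi}. In the statement, $\widehat N_{i,k}$ is written as $m_1\cdots m_{k-1}m_{k+1}\cdots m_{s_i}$, which I read as the product of the $m_{i,l}$ with $l\neq k$.

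The only point needing care is well-definedness. The integer $d_{j,i,k}$ is a residue modulo $m_{i,k}$, and the sign $(-1)^{(N_i-\widehat N_{i,k})d}$ must not depend on the representative chosen. This reduces to checking that $(N_i-\widehat N_{i,k})m_{i,k}$ is even. If $m_{i,k}$ is even this is immediate. If $m_{i,k}$ is odd, then $N_i-\widehat N_{i,k}=\widehat N_{i,k}(m_{i,k}-1)$ is even. So both sides are well defined, and I expect this bookkeeping, rather than any real obstacle, to be the main thing to verify.
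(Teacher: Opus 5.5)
Your proposal is correct and follows the same route as the paper. Both factor the product coordinatewise, observe that each value of $a_k$ occurs exactly $\widehat N_{i,k}$ times, and apply $\prod_{c=0}^{m-1}\xi^{c}=(-1)^{m-1}$ for a primitive $m$-th root of unity; your reading of $\widehat N_{i,k}$ as $\prod_{l\neq k}m_{i,l}$ and your check that the sign does not depend on the representative of $d_{j,i,k}$ modulo $m_{i,k}$ are sound clarifications of points the paper leaves implicit.
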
	
	\proof It is clear that if $\xi$  is a primitive $m$-th root of unity, then $$\prod_{a=0}^{m-1} \xi^a = (-1)^{m-1}.$$
	Consequently, \begin{align*}
		\prod_{a \in G_i} \xi_{G_i}^{-a\cdot d_{j,i}} & =\prod_{k=1}^{s_i} \prod_{a_k=0}^{m_{i,k}-1} \xi_{i,1}^{-a_1 d_{j,i,1}} \xi_{i,2}^{-a_2 d_{j,i,2} }\cdots  \xi_{i,s_i}^{-a_{s_i}d_{j,i,s_i}}\\
		& =\prod_{k=1}^{s_i} (-1)^{m_1\cdots m_{k-1}(m_{k}-1)m_{k+1}\cdots m_{s_i}d_{i,j,k}} \\
		& =(-1)^{\sum\limits_{k=1}^{s_i}(N_i-\widehat{N}_{i,k})d_{j,i,k}}, \\
	\end{align*}
	since $N_i = \prod _{k=1}^{s_i} m_{i,k}$.

	\subsection{Skew polynomial algebras} We will show that the AS-regular algebra obtained by an idempotent trivial affine mesh is isomorphic to a skew polynomial algebra. It can also be derived from \cite{gim2}. Our proof here is more elementary. We need the following lemma first.
	
	\begin{lem}\label{lemma 1}
		Let $m\ge 2$ be an integer, $\xi$  a primitive $m'$-th root of unity with $m'\ge m$ and $T_i$, $0\leqslant i\leqslant m-1$, are invertible matrices. We define  a  patitioned matrix as follows
		$$\mc{D}_m=\left(\begin{array}{ccccc}
			T_0&T_1&T_2&\cdots&T_{m-1}\\
			T_0&\xi T_1&\xi^2T_2&\cdots&\xi^{m-1}T_{m-1}\\
			T_0&\xi^2 T_1&\xi^4T_2&\cdots&\xi^{2m-2}T_{m-1}\\
			\vdots&\vdots&\vdots& &\vdots\\
			
			T_0&\xi^{m-1}T_1&\xi^{2m-2}T_2&\cdots&\xi^{(m-1)^2}  T_{m-1}
		\end{array}\right).$$
		Then $\left|\mc{D}_m\right|\neq 0$.
	\end{lem}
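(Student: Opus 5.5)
The plan is to factor $\mc{D}_m$ as a product of a Kronecker-type Vandermonde matrix and a block diagonal matrix, and then read off the determinant from the two factors.

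First, observe that every row block of $\mc{D}_m$ contains all of $T_0,\dots,T_{m-1}$, multiplied only by scalars. Since the matrix is assumed to be a well-formed partitioned matrix, the $T_i$ all have the same number of rows. Being invertible, each $T_i$ is square, so all the $T_i$ are $n\times n$ for a common $n$. Hence $\mc{D}_m$ is a square matrix of size $mn$.

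Next, I will let $V=(\xi^{ki})_{0\le k,i\le m-1}$ be the $m\times m$ Vandermonde matrix with nodes $1,\xi,\xi^2,\dots,\xi^{m-1}$. The key identity to verify is
$$\mc{D}_m=(V\ot I_n)\cdot \operatorname{diag}(T_0,T_1,\dots,T_{m-1}).$$
To check it, compute the $(k,i)$ block of the right-hand side. The factor $V\ot I_n$ has $(k,l)$ block $\xi^{kl}I_n$, and the block diagonal factor has only $T_i$ in block column $i$. So the $(k,i)$ block of the product is $\xi^{ki}I_n\,T_i=\xi^{ki}T_i$, which is exactly the $(k,i)$ block of $\mc{D}_m$.

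Taking determinants gives
$$|\mc{D}_m|=\det(V\ot I_n)\prod_{i=0}^{m-1}\det T_i=(\det V)^n\prod_{i=0}^{m-1}\det T_i.$$
Here $\det(V\ot I_n)=(\det V)^n$, since $V\ot I_n$ is permutation-similar to $I_n\ot V$, which is block diagonal with $n$ copies of $V$.

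It remains to see that both factors are nonzero. Each $\det T_i\neq0$ because the $T_i$ are invertible. For the Vandermonde factor, $\det V=\prod_{0\le k<l\le m-1}(\xi^l-\xi^k)$. This is nonzero because $1,\xi,\dots,\xi^{m-1}$ are pairwise distinct: $\xi$ is a primitive $m'$-th root of unity, so its powers $\xi^0,\dots,\xi^{m'-1}$ are distinct, and $m-1\le m'-1$ by the hypothesis $m'\ge m$. Therefore $|\mc{D}_m|\neq0$.

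The only delicate points are confirming that the block sizes are uniform, so that the factorization makes sense, and that the nodes are distinct, which is exactly where $m'\ge m$ is used. Everything else is routine.
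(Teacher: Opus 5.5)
Your proof is correct, and it takes a different route from the paper.

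The paper argues by block Gaussian elimination, following the classical inductive Vandermonde computation:
\begin{enumerate}
\item it subtracts from each block row the preceding one;
\item it pulls the factors $\xi^i-1$ out of the block columns and a power of $\xi$ out of the block rows;
\item it uses the resulting block-triangular form to split off $|T_0|$;
\item it recognizes the remaining block matrix as $\mc{D}_{m-1}$ built from $T_1,\dots,T_{m-1}$ with the same $\xi$, and concludes by induction on $m$ from the base case $m=2$.
\end{enumerate}

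Your factorization $\mc{D}_m=(V\ot I_n)\operatorname{diag}(T_0,\dots,T_{m-1})$ does this in one step and gives the exact value $|\mc{D}_m|=\bigl(\prod_{0\le k<l\le m-1}(\xi^l-\xi^k)\bigr)^n\prod_{i}|T_i|$. This makes clear that the only property of $\xi$ used is that $1,\xi,\dots,\xi^{m-1}$ are distinct.

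Your version also gets the bookkeeping right automatically. Pulling a scalar out of a block row or block column of width $n$ contributes its $n$-th power. The paper's displayed formulas omit these powers; for example, it writes $(\xi-1)|T_0||T_1|$ for $m=2$ where the value is $(\xi-1)^n|T_0||T_1|$. This is harmless for the non-vanishing conclusion, but your factorization produces the correct determinant.

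In exchange, the paper's argument uses only elementary block row operations and the block-triangular determinant formula, with no Kronecker products. Your explicit check that the blocks have a uniform size $n$ is needed for both arguments, though the paper leaves it implicit.
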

	\proof   For $m=2$,
	$$\begin{array}{rcl}
		\left|\mc{D}_2\right|
		&=&\left|\begin{array}{cc}
			T_0&T_1\\
			T_0&\xi T_1
		\end{array}\right|\\
		&=&\left|\begin{array}{cc}
			T_0&T_1\\
			0&(\xi-1) T_1
		\end{array}\right|\\
		&=&(\xi-1)|T_0||T_1|.
	\end{array}
	$$
	The matrix $T_0$ and $T_1$ are  invertible matrix and $\xi$ is an $m'$-th root of unity with $m'\ge2$, so both  $|T_0|$, $|T_1|$ and $\xi-1$ are nonzero. Hence, $\left|\mc{D}_2\right|\neq 0$.
	
	In general, we have	
	\begin{align*}
		\left|	\mathcal{D}_m\right|
		&= \left|\begin{array}{ccccc}
			T_{0} & T_{1} & \cdots & T_{m-1} \\
			T_{0} & \xi T_{1} & \cdots & \xi^{m-1} T_{m-1} \\
			\vdots & \vdots & & \vdots \\
			T_{0} & \xi^{m-1}T_{1} & \cdots & \xi^{(m-1)^2}T_{m-1}
		\end{array}\right| \\
		&= \left|\begin{array}{ccccc}
			T_{0} & T_{1} & \cdots & T_{m-1} \\
			0 & (\xi-1)T_{1} & \cdots & (\xi^{m-1}-1)T_{m-1} \\
			\vdots & \vdots & & \vdots \\
			0 & \xi^{m-2}(\xi-1)T_{1} & \cdots &
			\xi^{(m-2)(m-1)}(\xi^{m-1}-1)T_{m-1}
		\end{array}\right| \\
		&= \prod_{i=1}^{m-1}(\xi^i-1)\,|T_0|
		\left|\begin{array}{cccc}
			T_{1} & T_{2} & \cdots & T_{m-1} \\
			\xi T_{1} & \xi^{2}T_{2} & \cdots & \xi^{m-1}T_{m-1} \\
			\vdots & \vdots & & \vdots \\
			\xi^{m-2}T_{1} & \xi^{2m-4}T_{2} & \cdots &
			\xi^{(m-2)(m-1)}T_{m-1}
		\end{array}\right| \\
		&= \xi^{\frac{(m-2)(m-1)}{2}}
		\prod_{i=1}^{m-1}(\xi^i-1)\,|T_0|
		\left|\begin{array}{cccc}
			T_{1} & T_{2} & \cdots & T_{m-1} \\
			T_{1} & \xi T_{2} & \cdots & \xi^{m-2}T_{m-1} \\
			\vdots & \vdots & & \vdots \\
			T_{1} & \xi^{m-2}T_{2} & \cdots & \xi^{(m-2)^2}T_{m-1}
		\end{array}\right|.
	\end{align*}
	Again, the matrix $T_0$ is an invertible matrix and $\xi$ is an $m'$-th root of unity with $m'\ge m$ guarantee that $\xi^{\frac{(m-2)(m-1)}{2}} \prod_{i=1}^{m-1}(\xi^i-1) |T_0|\neq 0$. Now the lemma follows by induction.   \qed

	\begin{thm}\label{thm} Let $\mc{T}=\left(\left(G_{i}\right)_{1\le i\le p},D\right)$ be an idempotent trivial affine mesh. It determines a square-free solution $(X,r)$ as (\ref{eq s1}) and (\ref{eq s2}). Let  $\mc{A}=\mc{A}(X,r)$ be the  YB  algebra and $\mc{G}=\mc{G}(X,r)$ be the permutation group. For each $1\leqslant i \leqslant p$, $a\in G_i$, set $y_{i,a}=\sum_{b\in G_i}\xi_{G_i}^{a\cdot b}x_{i,b}$. Then
		\begin{enumerate}
			\item $Y=\{y_{i,a}|1\leqslant i \leqslant p, a\in G_i\}$ is a set of generators of $\mc{A}$ subject to the relations
			$$y_{j,b}y_{i,a}=\xi_{G_i}^{-a\cdot d_{j,i}}\xi_{G_j}^{b\cdot d_{i,j}}y_{i,a}y_{j,b}.$$
			So $\mc{A}$ is isomorphic to a skew polynomial algebra.
			\item The permutation group $\mc{G}$  acts on $Y$ diagonally. For each $1\leqslant i,j \leqslant p$, $a\in G_i$, $b\in G_j$,
			\[\mathcal{L}_{x_{i, a}}(y_{j,b}) = \xi_{G_j}^{-b\cdot  d_{i,j}}y_{j,b}.	\]
		\end{enumerate}
		
	\end{thm}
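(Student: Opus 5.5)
The plan is to use a discrete Fourier transform on each block $G_i$, so that the affine action $b\mapsto b+d$ becomes diagonal.

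\textbf{Step 1: the $y_{i,a}$ form a basis of $\mc{A}_1$.} For a fixed $i$, the transition matrix $(\xi_{G_i}^{a\cdot b})_{a,b\in G_i}$ is the character table of $G_i\cong \ZZ_{m_{i,1}}\times\cdots\times\ZZ_{m_{i,s_i}}$ in lexicographic order. It is a Kronecker product of Vandermonde matrices in primitive roots of unity. Equivalently, induct on $s_i$ using Lemma \ref{lemma 1}: take $m=m_{i,1}$, $\xi=\xi_{i,1}$, and let the $T_k$ be the (invertible) matrices for the remaining factors. Hence this matrix is invertible. The full change of variables $X\to Y$ is block diagonal, so $Y$ is a basis of $\mc{A}_1=\kk X$ and generates $\mc{A}$.

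\textbf{Step 2: the action on $Y$, i.e.\ part (2).} By (\ref{eq s2}), $\mc{L}_{x_{i,a}}(x_{j,c})=x_{j,c+d_{i,j}}$. Since the mesh is idempotent, the solution is square-free, and it is distributive because it comes from a mesh (Lemma \ref{L T}). So by Lemma \ref{lem action} each $\mc{L}_{x}$ extends to an algebra automorphism, and it suffices to compute it on degree one. Reindexing $c\mapsto c-d_{i,j}$ gives
\[
\mc{L}_{x_{i,a}}(y_{j,b})=\sum_{c}\xi_{G_j}^{b\cdot c}x_{j,c+d_{i,j}}=\xi_{G_j}^{-b\cdot d_{i,j}}y_{j,b},
\]
using $\xi_{G_j}^{b\cdot(c+d)}=\xi_{G_j}^{b\cdot c}\xi_{G_j}^{b\cdot d}$. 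This identity holds because the exponent in each coordinate $k$ only matters modulo $m_{j,k}$. So $\mc{G}$ acts diagonally on $Y$.

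\textbf{Step 3: the relations in $Y$, i.e.\ part (1).} The defining relations can be written as $x_{i,a}x_{j,b}=\mc{L}_{x_{i,a}}(x_{j,b})\,\mc{R}_{x_{j,b}}(x_{i,a})$, where $\mc{R}_{x_{j,b}}$ shifts the $G_i$-index by $-d_{j,i}$. Set
\[
S=\sum_{a',b'}\xi_{G_i}^{a\cdot a'}\xi_{G_j}^{b\cdot b'}x_{j,b'}x_{i,a'}=y_{j,b}y_{i,a}.
\]
Substitute $x_{j,b'}x_{i,a'}=x_{i,a'-d_{j,i}}\,x_{j,b'+d_{i,j}}$. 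This is the relation read from the right, valid by involutivity: $r(x_{j,b'},x_{i,a'})=(x_{i,a'+d_{j,i}},x_{j,b'-d_{i,j}})$, and applying $r$ again inverts it. Then reindex $a''=a'-d_{j,i}$ and $b''=b'+d_{i,j}$. This yields
\[
y_{j,b}y_{i,a}=\xi_{G_i}^{a\cdot d_{j,i}}\xi_{G_j}^{-b\cdot d_{i,j}}\,y_{i,a}y_{j,b}.
\]
The signs in the exponents must then be checked against the stated convention, fixing which side each shift lands on. I expect this sign and orientation bookkeeping to be the only delicate computation.

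\textbf{Step 4: these relations present $\mc{A}$.} The relations obtained span the same space of quadratic relations. Indeed, $\dim \mathbf{R}=\binom{n}{2}$ with $n=\sum_i N_i$, by Lemma \ref{lem skew} (skew-polynomial type). The Step 3 relations for unordered pairs $\{(i,a),(j,b)\}$ of distinct elements are also $\binom{n}{2}$ in number, are linearly independent, and lie in $\mathbf{R}$ after the invertible change of variables. Therefore they generate the same ideal, and $\mc{A}\cong\kk_{\mathbf{q}}[y_{i,a}]$ is a skew polynomial algebra.

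A minor point: for the pair $i=j$, $a=b$ the scalar is $1$, consistent with $d_{i,i}=0$. For pairs with $a\neq b$ in the same block the scalar is also $1$, so those variables commute.
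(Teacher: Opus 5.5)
The gap is in Step 3: the relation you substitute, $x_{j,b'}x_{i,a'}=x_{i,a'-d_{j,i}}\,x_{j,b'+d_{i,j}}$, does not hold in $\mc{A}$. Otherwise your route matches the paper's: a Fourier transform on each block $G_i$, a block-diagonal invertible transition matrix, and direct computation of both the diagonal action and the twisted commutation. Steps 1 and 2 are correct. The structure of Step 4 is sound, and it in fact supplies the dimension count ($\binom{n}{2}$ independent skew relations inside the $\binom{n}{2}$-dimensional relation space) that the paper only asserts.

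The defining relation for the monomial $x_{j,b'}x_{i,a'}$ comes straight from $r(x_{j,b'},x_{i,a'})=(x_{i,a'+d_{j,i}},x_{j,b'-d_{i,j}})$, which you wrote down correctly. It is $x_{j,b'}x_{i,a'}=x_{i,a'+d_{j,i}}\,x_{j,b'-d_{i,j}}$. Nothing needs to be inverted: involutivity only says that reading the relation for $x_{i,a}x_{j,b}$ from right to left gives this same relation. You negated both shifts, and that is exactly why you ended with $\xi_{G_i}^{a\cdot d_{j,i}}\xi_{G_j}^{-b\cdot d_{i,j}}$, the inverse of the claimed scalar.

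This is not a convention issue that can be checked away later. Since $\mc{A}$ is a domain, $y_{j,b}y_{i,a}$ cannot equal both $q\,y_{i,a}y_{j,b}$ and $q^{-1}y_{i,a}y_{j,b}$ unless $q^2=1$. Concretely, take $p=2$, $G_1=G_2=\ZZ_3$ and $d_{1,2}=d_{2,1}=1$. A direct check gives $y_{2,1}y_{1,0}=\xi\,y_{1,0}y_{2,1}$ with $\xi$ a primitive cube root of unity, whereas your formula gives $\xi^{-1}$. So Step 3 as written derives a false identity and never establishes the stated one, and Step 4 inherits this because its relations no longer lie in the relation space.

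The repair is one line. Use the correct relation and reindex by $a''=a'+d_{j,i}$, $b''=b'-d_{i,j}$. Then $\xi_{G_i}^{a\cdot a'}\xi_{G_j}^{b\cdot b'}=\xi_{G_i}^{-a\cdot d_{j,i}}\xi_{G_j}^{b\cdot d_{i,j}}\,\xi_{G_i}^{a\cdot a''}\xi_{G_j}^{b\cdot b''}$. This yields precisely $y_{j,b}y_{i,a}=\xi_{G_i}^{-a\cdot d_{j,i}}\xi_{G_j}^{b\cdot d_{i,j}}\,y_{i,a}y_{j,b}$, as in the paper's computation. After that fix, your Step 4 argument goes through unchanged.
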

	
	\proof (1) 	For each \(1 \le i \le p\), define a sequence of matrices \(\{T_i^{(1)}, T_i^{(2)}, \dots, T_i^{(s_i)}\}\) inductively as follows:
	
	\[
	T_i^{(1)} = \begin{pmatrix}
		1 & 1 & 1 & \cdots & 1 \\
		1 & \xi_{i,1} & \xi_{i,1}^2 & \cdots & \xi_{i,1}^{m_{i,1}-1} \\
		1 & \xi_{i,1}^2 & \xi_{i,1}^4 & \cdots & \xi_{i,1}^{2m_{i,1}-2} \\
		\vdots & \vdots & \vdots & & \vdots \\
		1 & \xi_{i,1}^{m_{i,1}-1} & \xi_{i,1}^{2m_{i,1}-2} & \cdots & \xi_{i,1}^{(m_{i,1}-1)^2}
	\end{pmatrix},
	\]	
	and for \(2 \le k \le s_i\),
	\[
	T_i^{(k)} = \begin{pmatrix}
		T_i^{(k-1)} & T_i^{(k-1)} & T_i^{(k-1)} & \cdots & T_i^{(k-1)} \\
		T_i^{(k-1)} & \xi_{i,k} T_i^{(k-1)} & \xi_{i,k}^2 T_i^{(k-1)} & \cdots & \xi_{i,k}^{m_{i,k}-1} T_i^{(k-1)} \\
		T_i^{(k-1)} & \xi_{i,k}^2 T_i^{(k-1)} & \xi_{i,k}^4 T_i^{(k-1)} & \cdots & \xi_{i,k}^{2m_{i,k}-2} T_i^{(k-1)} \\
		\vdots & \vdots & \vdots & & \vdots \\
		T_i^{(k-1)} & \xi_{i,k}^{m_{i,k}-1} T_i^{(k-1)} & \xi_{i,k}^{2m_{i,k}-2} T_i^{(k-1)} & \cdots & \xi_{i,k}^{(m_{i,k}-1)^2} T_i^{(k-1)}
	\end{pmatrix}.\]

	Apply Lemma \ref{lemma 1} inductively, we obtain that  each $T_i^{(s_i)}$, $1\leqslant i\le p$, is an $N_i\times N_i$ invertible matrix. Recall that the elements of $G_i$ are linearly enumerated via lexicographic order. Thus, they serve as natural row and column indices for $T_i^{(s_i)}$. For $a=(a_1,a_2,\cdots,a_{s_i})$ and $b=(b_1,b_2,\cdots,b_{s_i})$ in $G_i$, the $(a,b)$-entry  is  $\xi_{G_i}^{a\cdot b}=\xi_{i,1}^{a_1 b_1}\xi_{i,2}^{a_2 b_2}\cdots\xi_{i,s_i}^{a_{s_i} b_{s_i}}$.
	
	
	We arrange the variables $\{x_{i,a} \mid 1 \leq i \leq p,\ a \in G_i\}$ and $\{y_{i,a} \mid 1 \leq i \leq p,\ a \in G_i\}$ according to the lexicographic order. Set
	\begin{align*}
		X_i = (\, & x_{i,(0,0,\dots,0)},\ x_{i,(1,0,\dots,0)},\ \dots,\ x_{i,(m_{i,1}-1,0,\dots,0)},\\
		& x_{i,(0,1,0,\dots,0)},\ x_{i,(1,1,0,\dots,0)},\ \dots,\ x_{i,(m_{i,1}-1,1,0,\dots,0)},\\
		& \qquad\qquad\qquad\qquad  \dots,\ x_{i,(m_{i,1}-1,m_{i,2}-1,\dots,m_{i,s_i}-1)})
	\end{align*}
	and
	\begin{align*}
		Y_i = (\, & y_{i,(0,0,\dots,0)},\ y_{i,(1,0,\dots,0)},\ \dots,\ y_{i,(m_{i,1}-1,0,\dots,0)},\\
		& y_{i,(0,1,0,\dots,0)},\ y_{i,(1,1,0,\dots,0)},\ \dots,\ y_{i,(m_{i,1}-1,1,0,\dots,0)},\\
		& \qquad\qquad\qquad\qquad  \dots,\ y_{i,(m_{i,1}-1,m_{i,2}-1,\dots,m_{i,s_i}-1)})
	\end{align*}
	
	Then
	$$(Y_1,Y_2,\cdots,Y_p)=(X_1, X_2, \cdots, X_p)\left(\begin{array}{cccc}
		T^{(s_1)}_1\\
		&T^{(s_2)}_2\\
		&&\ddots\\
		&&&T^{(s_p)}_p
	\end{array}\right).$$
	Hence $Y=\{y_{i,a}|1\le i\le p,a\in G_i\}$ is a set of linearly independent generators of $\mc{A}$. They are subject to the following relations
	$$y_{j,b}y_{i,a}=\xi_{G_i}^{-a\cdot d_{j,i}}\xi_{G_j}^{b\cdot d_{i,j}}y_{i,a}y_{j,b}.$$
	In fact,
	$$\begin{array}{rcl}
		y_{j,b}y_{i,a}&=&\sum_{c'\in G_i,c''\in G_j}\xi_{G_i}^{a\cdot c'}\xi_{G_j}^{b\cdot c''}x_{j,c''}x_{i,c'}
		\\
		&=&\sum_{c'\in G_i,c''\in G_j}\xi_{G_i}^{a\cdot c'}\xi_{G_j}^{b\cdot c''}x_{i,c'+d_{j,i}}x_{j,c''-d_{i,j}}\\
		&=&\xi_{G_i}^{-a\cdot d_{j,i}}\xi_{G_j}^{b\cdot d_{i,j}}\sum_{c'\in G_i,c''\in G_j}\xi_{G_i}^{a\cdot c'}\xi_{G_j}^{b\cdot c''}x_{i,c'}x_{j,c''}\\
		&=&\xi_{G_i}^{-a\cdot d_{j,i}}\xi_{G_j}^{b\cdot d_{i,j}}y_{i,a}y_{j,b}.
	\end{array}$$
	Therefore, $\mc{A}$ is isomorphic to a skew polynomial algebra.
	
	(2) The permutation group $\mc{G}$ is generated by $\{\mc{L}_{x_{i,a}}\mid 1\le i\le p, a\in G_i\}$.	For any $x_{i,a}$, $y_{j,b}$, with $1\le i,j\le p$, $a\in G_i$, $b\in G_j$,
	\[	\begin{array}{rcl}
		\mathcal{L}_{x_{i, a}}(y_{j,b})& = &\mathcal{L}_{x_{i, a}} (\sum_{c\in G_j} \xi^{b\cdot c}_{G_j}  x_{j,c})\\
		& = &\sum_{c\in G_j} \xi^{b\cdot c}_{G_j}  x_{{j,c} + d_{i,j}}\\
		& = & \xi_{G_j}^{-b\cdot  d_{i,j}}\sum_{c\in G_j} \xi^{b\cdot (c + d_{i,j})}_{G_j}  x_{j,c + d_{i,j}}\\
		& = & \xi_{G_j}^{-b\cdot  d_{i,j}}y_{j,b}.
	\end{array}\]
	
	This completes the proof of (2).
	
	\qed

	\subsection{The Calabi-Yau property} 	With Theorem \ref{thm}, we are able to describe the Nakayama automorphism of distributive YB algebras.

	\begin{thm} Keep the notations as in Theorem \ref{thm}.
		The algebra $\mc{A}$ is a Koszul CY algebra of dimension $\sum_{i=1}^p N_i$. Its Nakayama automorphism $\mu$ is defined as $$\mu(x_{i,a})= (-1)^{\sum\limits_{j=1}^p\sum\limits_{k=1}^{s_j}(N_j-\widehat{N}_{j,k})d_{i,j,k}}
		x_{i,\; a - \sum_{j=1}^{p} (d_{i,j} N_j)},$$
		for each $1\le i\le p$, $a\in G_i$, where $d_{i,j} N_j=(d_{i,j,1}N_j,d_{i,j,2}N_j,\cdots,d_{i,j,s_i}N_j )$, $1\leqslant j\leqslant p$.
	\end{thm}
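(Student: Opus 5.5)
The plan is to use Theorem \ref{thm} to reduce everything to the well-understood case of skew polynomial algebras, and then carry the answer back to the generators $x_{i,a}$.

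First, Theorem \ref{thm}(1) gives a linear change of generators $X\to Y$, invertible by Lemma \ref{lemma 1}, under which $\mc{A}\cong \kk_{\mathbf q}[y_{i,a}]$. This is a skew polynomial algebra in $n=\sum_{i=1}^p N_i$ variables with relations $y_{j,b}y_{i,a}=q_{(j,b),(i,a)}\,y_{i,a}y_{j,b}$, where
\[
q_{(j,b),(i,a)}=\xi_{G_i}^{-a\cdot d_{j,i}}\xi_{G_j}^{b\cdot d_{i,j}} .
\]
It is standard that a skew polynomial algebra is Koszul (its Koszul dual is a skew exterior algebra) and AS-regular of global dimension equal to the number of variables. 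Hence $\mc{A}$ is Koszul and twisted CY of dimension $\sum_i N_i$, by the remark relating AS-regular and twisted CY algebras. Since $\mc{A}$ is connected graded, there are no nontrivial inner graded automorphisms, so $\mu$ is uniquely determined. Note that "CY" in the statement must be read as "twisted CY", since $\mu$ is in general not the identity.

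Second, I invoke the known formula for the Nakayama automorphism of a skew polynomial algebra $\kk_{\mathbf q}[t_1,\dots,t_n]$ with $t_jt_i=q_{ji}t_it_j$. It is diagonal, namely $\mu(t_i)=\bigl(\prod_{j=1}^n q_{ji}\bigr)t_i$. The direction of this convention (whether it is $q_{ji}$ or $q_{ij}$) has to be fixed by checking it on the quantum plane, using the Koszul complex and the volume element $t_1\cdots t_n$.

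Applying this to $\mc{A}$, I get
\[
\mu(y_{i,a})=\prod_{j=1}^p\prod_{b\in G_j}\xi_{G_i}^{-a\cdot d_{j,i}}\xi_{G_j}^{b\cdot d_{i,j}}\; y_{i,a}.
\]
The product separates into two factors:
\begin{itemize}
\item The factor independent of $b$ contributes $\xi_{G_i}^{-a\cdot \sum_j N_j d_{j,i}}$, since each $j$ appears $N_j$ times.
\item The factor $\prod_{b\in G_j}\xi_{G_j}^{b\cdot d_{i,j}}$ is computed by Lemma (\ref{eq xi}), with $b$ replaced by $-b$, which permutes $G_j$. It yields the sign $(-1)^{\sum_k(N_j-\widehat N_{j,k})d_{i,j,k}}$.
\end{itemize}
Multiplying over $j$ gives exactly the sign appearing in the statement.

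Finally, I translate back to $x$. The key observation is the same computation as in Theorem \ref{thm}(2): the shift $x_{i,c}\mapsto x_{i,c+e}$ acts on $y_{i,a}$ by the scalar $\xi_{G_i}^{-a\cdot e}$. Therefore the diagonal scalar $\xi_{G_i}^{-a\cdot e}$, with $e=\sum_j N_jd_{\cdot}$, corresponds to the translation $x_{i,c}\mapsto x_{i,c+e}$ (or $c-e$, depending on the sign convention), multiplied by the overall constant sign. Inverting the Fourier matrix $T_i^{(s_i)}$ then gives the stated formula for $\mu(x_{i,a})$.

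I expect the main obstacle to be bookkeeping rather than conceptual: fixing the correct orientation of the Nakayama formula ($q_{ji}$ versus $q_{ij}$, and hence $\mu$ versus $\mu^{-1}$), and tracking which index ($d_{i,j}$ versus $d_{j,i}$) lands in which group. These choices must be matched against the statement's translation vector $-\sum_j d_{i,j}N_j$. I would settle them first on a small example, such as $p=2$ with cyclic $G_i$, and only then write the general computation.
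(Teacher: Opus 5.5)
Your route is the paper's: pass to the generators $y_{i,a}$ of Theorem \ref{thm}, apply the diagonal Nakayama automorphism of a skew polynomial algebra (the paper cites \cite[Example 5.5]{rrz}), simplify the sign with the root-of-unity product lemma, and invert the Fourier matrix. The one point you left open, however, is written backwards in your proposal, and it is exactly what fixes the direction of the translation.

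With the paper's convention $\Ext^d_{\mc{A}^e}(\mc{A},\mc{A}^e)\cong \mc{A}_\mu(l)$ (twist on the right), take $t_jt_i=q_{ji}t_it_j$. Then
\[
\mu(t_i)=\bigl(\textstyle\prod_j q_{ji}^{-1}\bigr)t_i ,
\]
that is, $t_i$ is multiplied by the product of the scalars $c$ with $t_it_j=c\,t_jt_i$. On the quantum plane $yx=qxy$ this gives $\mu(x)=q^{-1}x$ and $\mu(y)=qy$. To see this, note that in the Koszul bimodule resolution the class $e$ of $1\otimes 1$ generating $\Ext^2$ satisfies $e\cdot y=q\,y\cdot e$ and $e\cdot x=q^{-1}x\cdot e$.

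Your formula $\prod_j q_{ji}$ therefore computes $\mu^{-1}$. The sign is unaffected, since it is $\pm1$. But you would obtain the scalar $\xi_{G_i}^{-a\cdot e_i}$ on $y_{i,a}$, with $e_i=\sum_j N_jd_{j,i}$, and hence the translation $x_{i,a}\mapsto x_{i,a+e_i}$, which is the wrong direction.

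With the correct orientation, $\mu(y_{i,a})=\epsilon_i\,\xi_{G_i}^{a\cdot e_i}y_{i,a}$, where $\epsilon_i$ is the sign in the statement. Since the shift $x_{i,c}\mapsto x_{i,c-e}$ multiplies $y_{i,a}$ by $\xi_{G_i}^{a\cdot e}$, you get $\mu(x_{i,a})=\epsilon_i\,x_{i,a-e_i}$.

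On the index bookkeeping, your $d_{j,i}$ is the right one, so do not try to match the statement's $d_{i,j}N_j$. Since $d_{i,j}\in G_j$, the vector $\sum_j d_{i,j}N_j$ (written with $s_i$ components) is an index slip for $\sum_j N_jd_{j,i}\in G_i$. The paper's proof makes the same slip when it writes $(\xi_{G_i}^{a\cdot d_{i,j}})^{N_j}$ instead of $(\xi_{G_i}^{a\cdot d_{j,i}})^{N_j}$.

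The two readings genuinely differ. Take $G_1=\ZZ_3$, $G_2=\ZZ_2$, $d_{1,2}=1$, $d_{2,1}=2$, and let $\omega$ be a primitive cube root of unity. A direct check gives $\mu(y_{1,a})=-\omega^{a}y_{1,a}$, so $\mu(x_{1,a})=-x_{1,a-1}$, not $-x_{1,a-2}$.

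Your remark that ``CY'' in the statement must be read as twisted CY is also correct.
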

	\proof Let $T_i^{(k)}$, $1\le i\le p$, $1\le k\le s_i$ be the matrices as defined in the proof of Theorem \ref{thm}. For each $1\le i\le p$, it is easy to check that
	$$
	(T_i^{(1)})^{-1}=\frac{1}{m_{i,1}}\left(\begin{array}{ccccc}
		1&1&1&\cdots&1\\
		1&\xi^{-1}_{i,1} &\xi_{i,1}^{-2}&\cdots&\xi_{i,1}^{-m_{i,1}+1}\\
		1&\xi_{i,1}^{-2} &\xi_{i,1}^{-4}&\cdots&\xi_{i,1}^{-2m_{i,1}+2}\\
		\vdots&\vdots&\vdots& &\vdots\\
		1&\xi_{i,1}^{-m_{i,1}+1}&\xi_{i,1}^{-2m_{i,1}+2}&\cdots&\xi_{i,1}^{-(m_{i,1}-1)^2}
	\end{array}\right).
	$$
	
Inductively,
	$$
	\left(T_i^{(k)}\right)^{-1} =\frac{1}{m_{i,k}}
	\begin{pmatrix}
		(M_i^{(k)})_{1,1} & (M_i^{(k)})_{1,2} & \cdots & (M_i^{(k)})_{1,m_{i,k}} \\
		(M_i^{(k)})_{2,1} & (M_i^{(k)})_{2,2} & \cdots & (M_i^{(k)})_{2,m_{i,k}} \\
		\vdots & \vdots & & \vdots \\
		(M_i^{(k)})_{m_{i,k},1} & (M_i^{(k)})_{m_{i,k},2} & \cdots & (M_i^{(k)})_{m_{i,k},m_{i,k}}
	\end{pmatrix}
	$$
	where each $(M_i^{(k)})_{q'q''}$ is a block matrix given by $$(M_i^{(k)})_{q'q''}=\xi^{-(q'-1)(q''-1)}_{i,k}(T^{(k-1)}_{i})^{-1}, \;\;1\leqslant q',q''\leqslant m_{i,k}.$$ We also index the rows and columns of  of $(T_i^{(s_i)})^{-1}$ by elements of $G_i$. For $a=(a_1,a_2,\cdots,a_{s_i})$ and $b=(b_1,b_2,\cdots,b_{s_i})$ in $G_i$, the $(a,b)$-entry of $(T_i^{(s_i)})^{-1}$ is
	$$\displaystyle\frac{1}{N_i}\xi_{G_i}^{-(a\cdot b)}=\frac{1}{N_i}\xi^{-a_1 b_1}_{i,1} \xi^{-a_2 b_2}_{i,2} \cdots \xi^{-a_{s_i} b_{s_i}}_{i,s_i}.$$ Therefore, for each $1\le i\le p$, $a\in G_i$,
	\begin{align*}
		x_{i,a}=&\sum_{b\in G_i}(T_{i}^{(s_i)})^{-1}_{b,a}y_{i,b}\\
		=&\frac{1}{N_i}\sum_{b\in G_i}\xi_{G_i}^{-(a\cdot b)}y_{i,b}.\\
	\end{align*}
	The Nakayama automorphism of a skew polynomial algebra is well-known (see e.g. \cite[Example 5.5]{rrz}). Hence, by Theorem \ref{thm}, the Nakayama automorphism of $\mc{A}$ is given by
	\begin{align*}
		\mu(y_{i,a})&=\prod_{(j,b)\neq (i,a)} \xi_{G_i}^{a\cdot d_{j,i}}\xi_{G_j}^{-b\cdot d_{i,j}}y_{i,a}\\
		&=\prod_{j=1}^{p}\left((\xi_{G_i}^{a\cdot d_{i,j}}) ^{N_j}
		( \prod_{b\in G_j}  \xi_{G_j}^{-b\cdot d_{i,j}})\right)y_{i,a}\;\;(d_{i,i}=\mathbf{0} \text { for } 1\leqslant i\leqslant p)\\
		&\overset{(\ref{eq xi})}{=}\left(\prod_{j=1}^{p}(\xi_{G_i}^{a\cdot d_{i,j}}) ^{N_j}(-1)^{\sum\limits_{k=1}^{s_j}(N_j-\widehat{N}_{j,k})d_{i,j,k}}\right)y_{i,a}.
	\end{align*}
	Consequently, 	
	\begin{align*}
		\mu(x_{i,a})
		&= \mu\left( \frac{1}{N_i} \sum_{b \in G_i} \xi_{G_i}^{-(a \cdot b)} y_{i,b} \right) \\
		&= \frac{1}{N_i} \sum_{b \in G_i} \xi_{G_i}^{-(a \cdot b)}\mu(y_{i,b}) \\
		&= \frac{1}{N_i} \sum_{b \in G_i} \xi_{G_i}^{-(a \cdot b)}
		\left(	\left(\prod_{j=1}^{p}(\xi_{G_i}^{b\cdot d_{i,j}}) ^{N_j}(-1)^{\sum\limits_{k=1}^{s_j}(N_j-\widehat{N}_{j,k})d_{i,j,k}}\right)y_{i,b}\right) \\
		&= \frac{1}{N_i} \sum_{b \in G_i} \xi_{G_i}^{-(a \cdot b)}
		\left(\left(\prod_{j=1}^{p} (\xi_{G_i}^{b\cdot d_{i,j}}) ^{N_j} (-1)^{\sum\limits_{k=1}^{s_j}(N_j-\widehat{N}_{j,k})d_{i,j,k}}\right)
		(\sum_{c \in G_i} \xi_{G_i}^{b\cdot c} x_{i,c}) \right) \\
		&= \frac{1}{N_i}
		\prod_{j=1}^{p} (-1)^{\sum\limits_{k=1}^{s_j}(N_j-\widehat{N}_{j,k})d_{i,j,k}}
		\sum_{b,c \in G_i}
		\xi_{G_i}^{b\left(-a + \sum\limits_{j=1}^{p} (d_{i,j} N_j)+c\right) }
		x_{i,c} \\
		&= (-1)^{\sum\limits_{j=1}^p\sum\limits_{k=1}^{s_j}(N_j-\widehat{N}_{j,k})d_{i,j,k}}
		x_{i,\; a - \sum_{j=1}^{p} (d_{i,j} N_j)},
	\end{align*}
	where $d_{i,j} N_j=(d_{i,j,1}N_j,d_{i,j,2}N_j,\cdots,d_{i,j,s_i}N_j )$. The last equation holds because
	$$\sum_{b \in G_i}
	\xi_{G_i}^b=0.$$
	Now we complete the proof. 	\qed

	\section{The Yang-Baxter permutation group actions}
	We retain the notations of Section 2, in particular Notation \ref{notation}
	and Theorem~\ref{thm}.  Specifically, let $\mathcal{T}=((G_i)_{1\le i\le p}, D)$ be an idempotent trivial affine mesh, it determines a square-free distributive solution $(X,r)$ and an AS-regular algebra $\mc{A}=\mc{A}(X,r)$, where
	$$	X=\{x_{i,a}\mid 1\le i\le p, a\in G_i\}.$$
	For the algebra $\mc{A}$, we continue to use
	the generators
	\[
	y_{i,a}=\sum_{b\in G_i}\xi_{G_i}^{a\cdot b}x_{i,b}
	\]
	for all $1\le i\le p$ and $a\in G_i$.

	As mentioned in Lemma \ref{lem action}, the permutation group $\mc{G}=\mc{G}(X,r)$ acts on the   algebra $\mc{A}$.
	There is a natural  group homomorphism $\Phi: \mc{G}\to \Aut(\mc{A})$. Let  $\overline{\mc{G}}=\overline{\mc{G}}(X,r)$ be the subgroup of $\Aut(\mc{A})$ generated by $\Im \Phi$.	Recall that $\mc{G}$ is the subgroup of $\Sym(X)$ generated by $\{\mc{L}_{x_{i,a}}\mid 1\le i\le p, a\in G_i\}$, and
	\[\mathcal{L}_{x_{i, a}}(y_{j,b})=\mathcal{L}_{x_{i, a'}}(y_{j,b})=\xi_{G_j}^{-b\cdot  d_{i,j}}y_{j,b},\]
	for any $1\leqslant i,j\leqslant p$, and $a,a'\in G_i,b\in G_j$.
	We use ${\overline{\mc{L}}_{x_{i,a}}}$ to denote $\Phi(\mc{L}_{x_{i,a}})$, then   $\overline{\mc{G}}$ is   the subgroup of $\Aut(\mc{A})$ generated by $\{\overline{\mc{L}}_{x_{i,\mathbf{0}}}\mid 1\le i\le p\}$. In the following, we call  $\overline{\mc{G}}$  the \it{induced automorphism group} of $\mc{A}$.
	
	The AS-regular algebra  $\mc{A}$ is naturally a faithful   $\overline{\mc{G}}$-module. In this section, we discuss the action of $\overline{\mc{G}}$ on the   algebra $\mc{A}$.
	
	\subsection{Homological determinants}The notion of the homological determinant was introduced by
	J{\o}rgensen and Zhang~\cite[Definition~2.3]{joz} as a noncommutative
	analogue of the usual determinant of a linear group action.
	
	Let $A$ be an AS-Gorenstein algebra of injective dimension $d$.
	Then the local cohomology group (see, e.g., \cite{joz} for the definition) of $A$ satisfies
	\[
	H_{\mathfrak{m}}^{i}(A) \cong
	\begin{cases}
		0, & i \neq d,\\
		{}_{A}A^{\vee}(l), & i = d,
	\end{cases}
	\]
	where $A^{\vee} = \bigoplus_{i \in \mathbb{N}} A_i^{*}$ is the graded dual of $A$.
	
	Each graded automorphism $\sigma$ of $A$ induces a $\sigma$-linear homomorphism
	\[
	H_{\mathfrak{m}}^{d}(\sigma): H_{\mathfrak{m}}^{d}(A) \longrightarrow H_{\mathfrak{m}}^{d}(A).
	\]
	This induces a $\sigma$-linear homomorphism on the dual space:
	\[
	H_{\mathfrak{m}}^{d}(\sigma): {}_{A}A^{\vee}(l) \longrightarrow {}_{A}A^{\vee}(l).
	\]
	
	There exists a nonzero scalar $c \in \Bbbk^\times$ such that this $\sigma$-linear homomorphism is equal to
	\[
	c\,(\sigma^{-1})^{\vee}: {}_{A}A^{\vee}(l) \longrightarrow {}_{A}A^{\vee}(l).
	\]
	The \emph{homological determinant} of $\sigma$, denoted by $\hdet(\sigma)$,
	is defined to be the scalar $c^{-1}$.
	This yields a group homomorphism $\hdet:\GrAut(A)\to\Bbbk^{\times}$.
	
	Equivalently, $\hdet$ is characterized by the commutativity of the diagram
	\[
	\begin{array}{ccc}
		H_{\mathfrak{m}}^{d}(A) &
		\xrightarrow{\eqmakebox[M]{$H_{\mathfrak{m}}^{d}(\sigma)$}} &
		H_{\mathfrak{m}}^{d}(A) \\
		\bigg\downarrow{\cong} & & \bigg\downarrow{\cong} \\
		A^{\vee}(l) &
		\xrightarrow{\eqmakebox[M]{$\hdet(\sigma)(\sigma^{-1})^{\vee}$}} &
		A^{\vee}(l)
	\end{array}
	\]
	for all $\sigma\in\GrAut(A)$.
	
	If $A = \kk[x_1, \dots, x_n]$, then $\GrAut(A)=\GL_n(\kk)$ and the homological determinant recovers the usual determinant.  We refer to \cite[Section 2]{joz} for the details.

	Let  $A = T(V)/(\mathbf{R})$ be a quadratic algebra. The homogeneous dual  of $A$ is
	defined as  $A^!=T(V^*)/(\mathbf{R}^\perp)$, where $\mathbf{R}^\perp\subseteq (V\otimes V)^*\cong V^*\otimes V^*$ is the orthogonal complement of $\mathbf{R}$. Assume $\dim V = n$ and $\{x_1, x_2, \dots, x_n\}$ is a basis of $V$, with  $\{x_1^*, x_2^*, \dots, x_n^*\}$ being the dual basis of $V^*$.
	Let $\sigma$ be a graded automorphism of $A$.  The dual of the restriction $\sigma|_{A_1}$ induces a graded automorphism $\sigma^\tau$ of $A^!$. Suppose $\sigma$ is defined by
	\[
	\sigma(x_1, x_2, \dots, x_n) = (x_1, x_2, \dots, x_n) P,
	\]
	where $P = (p_{ij})$ is an $n \times n$ invertible matrix. Then $\sigma^\tau$ is determined by
	\[
	\sigma^\tau(x_1^*, x_2^*, \dots, x_n^*) = (x_1^*, x_2^*, \dots, x_n^*) P^T,
	\]
	where $P^T$ is the transpose matrix of $P$.

	If $A$ is a  Koszul AS-regular algebra of global dimension $d$, then the Yoneda algebra $E(A) = \bigoplus_{i\geqslant 0} \operatorname{Ext}^i_A(\Bbbk, \Bbbk)$ is anti-isomorphic to $A^!$. Especially, we have $(A^!)_d\cong\operatorname{Ext}^d_A(\Bbbk,\Bbbk)$ and is one-dimensional. The following lemma can be derived from \cite[Proposition 1.11]{wz}
	\begin{lem}\label{lem hddet}
		Let $A$ be a Koszul AS-regular algebra of global dimension $d$ and
		$\sigma\in\GrAut(A)$  with $\sigma^\tau$ the graded automorphism of $A^!$ induced by $\sigma$. Then
		$$\hdet(\sigma)w=\sigma^\tau w$$
		for any $w\in A^!_d$.
	\end{lem}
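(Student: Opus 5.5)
Since Lemma \ref{lem hddet} is quoted from \cite[Proposition 1.11]{wz}, I will not reprove the general machinery; I will check that the hypotheses match and then reduce the statement to the homological determinant via the Koszul dual. Here $A$ is Koszul AS-regular of global dimension $d$, and I identify $\operatorname{Ext}^\bullet_A(\kk,\kk)$ with $E(A)\cong (A^!)^{op}$. The plan is to compare two descriptions of the top local cohomology, or equivalently of $\operatorname{Ext}^d_A(\kk,A)$. First, I will write $\operatorname{Ext}^d_A(\kk,A)\cong \kk(l)$. Second, using the Koszul resolution $\cdots\to A\otimes (A^!_d)^*\to\cdots\to A\to\kk$, I will get $\operatorname{Ext}^d_A(\kk,A)$ as the cokernel of $A^!_{d-1}\otimes A\to A^!_d\otimes A$. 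Its degree-$(-d)$ part, which is one-dimensional by AS-regularity, is spanned by $w\otimes 1$ for $0\neq w\in A^!_d$.

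Next I will track how $\sigma$ acts. The automorphism $\sigma$ acts on the Koszul complex: on the terms $A\otimes (A^!_i)^*$ it acts by $\sigma\otimes ((\sigma^\tau)^{-1})^*$, or the appropriate dual convention. It therefore acts on $\operatorname{Ext}^d_A(\kk,A)$, and on the generator $w\otimes 1$ by $\sigma^\tau(w)\otimes 1$. By \cite[Lemma 2.2 / Definition 2.3]{joz}, the homological determinant is computed by the induced action on $\operatorname{Ext}^d_A(\kk,A)$, or dually on $H^d_{\mathfrak m}(A)$ via local duality. The convention $c^{-1}$ together with the $(\sigma^{-1})^\vee$ twist is exactly what turns the scalar by which $\sigma$ acts on the socle of $H^d_\mathfrak m(A)$ into $\hdet(\sigma)$.

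Finally, since $A^!_d$ is one-dimensional, $\sigma^\tau w=\lambda w$ for some scalar $\lambda$, and I will identify $\lambda=\hdet(\sigma)$. The main obstacle is the bookkeeping of dualities: $(\sigma^{-1})^\vee$ versus $\sigma^\tau$, and transposes versus inverses. I would sanity-check this on $A=\kk[x_1,\dots,x_n]$. There $A^!=\Lambda(V^*)$ and $\sigma^\tau$ acts on $A^!_n$ by $\det P^T=\det P$. Since $\hdet$ recovers the usual determinant there, this fixes the conventions, and the general case then follows by the same functorial argument as in \cite{wz}.
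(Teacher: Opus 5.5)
Your proposal is essentially the paper's own justification: the paper simply derives the lemma from \cite[Proposition 1.11]{wz}, and your Koszul-resolution/socle sketch is the standard route to that result, with the general case likewise deferred to \cite{wz}. If you want the sketch to stand on its own, the step ``fix conventions via $\kk[x_1,\dots,x_n]$'' has to be replaced by the actual bookkeeping, since an example can confirm but not establish the direction of the identity. Under the natural functoriality $f\mapsto\sigma\circ f\circ\tilde\sigma^{-1}$ compatible with $H^d_{\mathfrak m}(\sigma)$, $\sigma$ acts on the class of $w\ot 1$ in $\Ext^d_A(\kk,A)$ by $(\sigma^\tau)^{-1}$, not by $\sigma^\tau$. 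On the socle $A_0^*(l)\cong\Ext^d_A(\kk,A)$ of $H^d_{\mathfrak m}(A)\cong A^\vee(l)$ it acts by $c=\hdet(\sigma)^{-1}$, because $(\sigma^{-1})^\vee$ is trivial on $A_0^*$. The two inversions cancel and give $\sigma^\tau w=\hdet(\sigma)w$.
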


	\begin{prop}
		Let $\mathcal{T}=((G_i)_{1\le i\le p}, D)$ be an idempotent trivial affine mesh, it determines a square-free distributive solution $(X,r)$ and an  algebra $\mc{A}=\mc{A}(X,r)$, where $X=\{x_{i,a}\mid 1\le i\le p, a\in G_i\}$.
		Let $\overline{\mc{G}}$ be the induced automorphism group $\mc{A}$. The homological determinant of elements of  $\overline{\mc{G}}$ are as follows:
		\[\hdet(\overline{\mc{L}}_{x_{i,\mathbf{0}}})= (-1)^{\sum\limits_{j=1}^p\sum\limits_{k=1}^{s_j}(N_j-\widehat{N}_{j,k})d_{i,j,k}}.\]
		
	\end{prop}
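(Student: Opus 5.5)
The plan is to work in the diagonal basis $Y=\{y_{j,b}\}$ from Theorem~\ref{thm}. There $\mc{A}$ is a skew polynomial algebra, so it is Koszul AS-regular of dimension $n=\sum_j N_j$, and Lemma~\ref{lem hddet} applies. By Theorem~\ref{thm}(2), $\overline{\mc{L}}_{x_{i,\mathbf{0}}}$ acts diagonally:
\[
\overline{\mc{L}}_{x_{i,\mathbf{0}}}(y_{j,b})=\xi_{G_j}^{-b\cdot d_{i,j}}y_{j,b}.
\]
Hence the matrix $P$ of $\sigma=\overline{\mc{L}}_{x_{i,\mathbf{0}}}$ with respect to $Y$ is diagonal, and $\sigma^\tau$ acts on the dual basis $y_{j,b}^*$ of $\mc{A}^!_1$ by the same scalars.

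Next I describe $\mc{A}^!$. For a skew polynomial algebra with relations $y_{j,b}y_{i,a}=q\,y_{i,a}y_{j,b}$, the quadratic dual is a skew exterior algebra. It is generated by the $y^*$'s subject to $(y^*)^2=0$ and commutation relations $y_{i,a}^*y_{j,b}^*=-q\,y_{j,b}^*y_{i,a}^*$ (the sign and exponent conventions do not matter here). Its top component $\mc{A}^!_n$ is therefore one-dimensional, spanned by the ordered product $w=\prod_{(j,b)}y_{j,b}^*$ over all generators in a fixed order. Because $\sigma^\tau$ is an algebra automorphism acting diagonally on generators, it scales $w$ by the product of all eigenvalues. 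Lemma~\ref{lem hddet} then gives
\[
\hdet(\overline{\mc{L}}_{x_{i,\mathbf{0}}})=\prod_{j=1}^p\prod_{b\in G_j}\xi_{G_j}^{-b\cdot d_{i,j}},
\]
which is the usual determinant of $P$.

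Finally I evaluate this product using the lemma leading to (\ref{eq xi}), applied with the roles of the indices arranged so that the constant is $d_{i,j}\in G_j$. Note that $d_{i,j}\in G_j$ has coordinates $d_{i,j,k}$, $1\le k\le s_j$. The inner product over $b\in G_j$ then equals $(-1)^{\sum_{k=1}^{s_j}(N_j-\widehat{N}_{j,k})d_{i,j,k}}$, and multiplying over $j$ gives the claimed formula. The term $j=i$ contributes $1$ since $d_{i,i}=0$.

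I expect the main obstacle to be bookkeeping rather than anything conceptual. First, I have to check that (\ref{eq xi}) is stated with the index convention matching $d_{i,j}\in G_j$; its proof as written mixes $d_{j,i,k}$ and $d_{i,j,k}$, so I would redo the one-line computation $\prod_{a=0}^{m-1}\xi^{ac}=(-1)^{(m-1)c}$ directly. Second, I have to confirm that $\sigma^\tau$ uses $P^T$, which is harmless since $P$ is diagonal.
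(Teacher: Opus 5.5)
Your proposal is correct and follows the paper's proof essentially step for step. You diagonalize $\overline{\mc{L}}_{x_{i,\mathbf{0}}}$ on the basis $y_{j,b}$, pass to the skew exterior algebra $\mc{A}^!$, read off the scalar by which $\sigma^\tau$ multiplies the spanning element of the one-dimensional top component, invoke Lemma~\ref{lem hddet}, and evaluate each block product $\prod_{b\in G_j}\xi_{G_j}^{-b\cdot d_{i,j}}$ with the root-of-unity product lemma. Your caution about that lemma's indices is warranted, since its proof writes $d_{i,j,k}$ where $d_{j,i,k}$ is meant. Redoing $\prod_{c=0}^{m-1}\xi^{ct}=(-1)^{(m-1)t}$, with each coordinate value occurring $\widehat{N}_{j,k}$ times, gives exactly the exponent $(N_j-\widehat{N}_{j,k})d_{i,j,k}$.
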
	
	\proof By Theorem \ref{thm}, the algebra $\mc{A}$ is isormophic to a skew polynomial algebra.
	It is a Koszul algebra of global dimension $\sum_{i=1}^pN_i$. Its Koszul dual $\mc{A}^!$ is isomorphic to the algebra generated by
	$\{y^*_{i,a}\mid 1\leqslant i\leqslant p, a\in G_i\}$ subject to the relations:
	\[
	y^*_{j,b}\wedge y_{i,a}^* = -\xi_{G_i}^{-a\cdot d_{j,i}}\xi_{G_j}^{b \cdot d_{i,j}} y_{i,a}^*\wedge y_{j,b}^*,
	\] for any $1\leqslant i,j\leqslant p$ and $a\in G_i, b\in G_j$.
	
	The group $\overline{\mc{G}}$ acts on $\mathcal{A}$ as
	\[\overline{\mathcal{L}}_{x_{i, \mathbf{0}}}(y_{j,b})=\xi_{G_j}^{-b\cdot  d_{i,j}}y_{j,b},\]
	for any $\overline{\mathcal{L}}_{x_{i, \mathbf{0}}}\in \overline{\mc{G}}$ and $1\leqslant j\leqslant p$, $b\in G_j$.  The induced graded automorphism $(\overline{\mathcal{L}}_{x_{i, \mathbf{0}}})^{\tau}$ of $A^!$ is determined by
	\[
	(\overline{\mathcal{L}}_{x_{i, \mathbf{0}}})^{\tau}(y^*_{j,b}) = \xi_{G_j}^{-b\cdot  d_{i,j}}y^*_{j,b}.
	\]
	$\mc{A}_N^!$ is 1-dimensional with a basis
	\[w=w_1\wedge w_2 \wedge\cdots \wedge w_p,\]
	where $w_j=\wedge_{b \in G_j} y^*_{j,b}$.
	
	For $1\leqslant i,j \leqslant p$, we have
	\begin{align*}
		(\overline{\mathcal{L}}_{x_{i, \mathbf{0}}})^{\tau}(w_j)&=\prod_{b\in G_j}\xi_{G_j}^{-b\cdot  d_{i,j}}w_j\\
		&\overset{(\ref{eq xi})}{=}(-1)^{\sum\limits_{k=1}^{s_j}(N_j-\widehat{N}_{j,k})d_{i,j,k}}w_j.
	\end{align*}
	Then
	\[(\overline{\mathcal{L}}_{x_{i, \mathbf{0}}})^{\tau}(w)=(-1)^{\sum\limits_{j=1}^p\sum\limits_{k=1}^{s_j}(N_j-\widehat{N}_{j,k})d_{i,j,k}}w.\]
	Therefore, by Lemma \ref{lem hddet}
	\[\hdet(\overline{\mathcal{L}}_{x_{i,\mathbf{0}}})=(-1)^{\sum\limits_{j=1}^p\sum\limits_{k=1}^{s_j}(N_j-\widehat{N}_{j,k})d_{i,j,k}}.\]\qed
	
	\subsection{Quasi-reflections}
	
	Let $A$ be a graded algebra and  $g\in \GrAut(A)$. Then the trace function of  $g$  is defined to be
	$$\Tr_{A}(g, t)=\sum_{i=0}^{\infty}\tr(g|_{A_{i}}) t^{i} \in k[\![ t]\!],$$
	where  $\tr(g|_{A_{i}})$  is the trace of the linear map  $g|_{A_{i}}$. In particular, $\Tr_A(\id_A,t)= \sum_{i=0}^{\infty}\dim A_i t^{i}=H_A(t)$.
	\begin{defn}
		Let  $A$  be an AS-regular  algebra such that
		$$H_{A}(t)=\frac{1}{(1-t)^{n} f(t)}$$
		where  $f(1) \neq 0$ . A graded algebra automorphism $g$ of  $A$   is called  a \it{quasi-reflection} of  $A$  if
		$$\Tr _{A}(g, t)=\frac{1}{(1-t)^{n-1} q(t)}$$
		for  $q(1) \neq 0$.
	\end{defn}
	If  the Hilbert series of $A$ is  $H_{A}(t)=(1-t)^{-n}$, then  $g$  is a quasi-reflection if and only if
	$$\Tr _{A}(g, t)=\frac{1}{(1-t)^{n-1}(1-\lambda t)}$$
	for some $\lambda\neq 1$.

	\begin{lem}\label{lem ref} Let $\mathcal{T}=((G_i)_{1\le i\le p}, D)$ be an idempotent trivial affine mesh. It determines a square-free distributive solution $(X,r)$ and an  algebra $\mc{A}=\mc{A}(X,r)$. The induced automorphism group $\overline{\mc{G}}$ of $\mc{A}$ is generated by $\{\overline{\mc{L}}_{x_{i,\mathbf{0}}}\mid 1\le i\le p\}$.
		\begin{enumerate}
			\item For $1\le i\le p$,
			$\overline{\mc{L}}_{x_{i,\mathbf{0}}}$  is a quasi-reflection if and only if there is some $1\le k\le p$, such that $G_k=\ZZ_2$ and $d_{i,j}$ satisfies that $$d_{i,j} = \begin{cases} 1, & j = k; \\0, & j
				\neq k. \end{cases}$$
			In this case, $\overline{\mc{L}}_{x_{i,\mathbf{0}}}$ is a reflection.
			\item If $g\in \overline{\mc{G}}$ is a quasi-reflection, then $g$ is a reflection.
			\item If $G_i\neq \ZZ_2$ for all $1\le i\le p$, then $ \overline{\mc{G}}$ contains no reflections.
		\end{enumerate}

	\end{lem}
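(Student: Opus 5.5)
The plan is to work throughout in the basis $Y=\{y_{i,a}\}$ of Theorem \ref{thm}. There $\mc{A}$ is a skew polynomial algebra in $n=\sum_{j=1}^p N_j$ variables, so $H_{\mc{A}}(t)=(1-t)^{-n}$. Every element of $\overline{\mc{G}}$ acts diagonally on $Y$. For a diagonal automorphism $g$ with $g(y_{j,b})=\lambda_{j,b}y_{j,b}$, the ordered monomials $\prod y_{j,b}^{e_{j,b}}$ form a PBW basis of eigenvectors, so
\[
\Tr_{\mc{A}}(g,t)=\prod_{(j,b)}\frac{1}{1-\lambda_{j,b}t}.
\]
Consequently $g$ is a quasi-reflection if and only if exactly one eigenvalue $\lambda_{j,b}$ differs from $1$. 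Such a $g$ is then a reflection in the classical sense: it fixes a hyperplane of $\mc{A}_1$ pointwise and acts as an honest diagonal linear map. This observation already gives (2) for an arbitrary $g\in\overline{\mc{G}}$, because $\overline{\mc{G}}$ is abelian and diagonal in $Y$.

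For (1), I use Theorem \ref{thm}(2): $\overline{\mc{L}}_{x_{i,\mathbf 0}}$ has eigenvalue $\xi_{G_j}^{-b\cdot d_{i,j}}$ on $y_{j,b}$. Fix $j$. The map $b\mapsto \xi_{G_j}^{-b\cdot d_{i,j}}$ is a character of $G_j$; it is trivial exactly when $d_{i,j}=0$. Indeed, if some component $d_{i,j,k}\not\equiv 0 \pmod{m_{j,k}}$, then taking $b=e_k$ gives $\xi_{j,k}^{-d_{i,j,k}}\neq 1$. If the character is nontrivial, its kernel is a proper subgroup, so the number of $b$ with eigenvalue $\neq1$ is at least $N_j/2$, with equality only when the kernel has index $2$. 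Requiring exactly one non-unit eigenvalue overall therefore forces three things:
\begin{itemize}
\item[(a)] there is a unique $k$ with $d_{i,k}\neq 0$ (there must be at least one, since otherwise $g=\id$, and at most one);
\item[(b)] $|G_k\setminus\ker|=1$;
\item[(c)] since the non-kernel part has size at least $N_k/2$, we get $N_k\le 2$, hence $G_k=\ZZ_2$ and $d_{i,k}=1$.
\end{itemize}
For the converse, the stated conditions give eigenvalue $-1$ on $y_{k,1}$ and $1$ on every other basis vector.

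For (3), a general $g\in\overline{\mc{G}}$ is a product $\prod_i\overline{\mc{L}}_{x_{i,\mathbf 0}}^{n_i}$. On $y_{j,b}$ it acts by $\chi_j(b)$, where $\chi_j(b)=\xi_{G_j}^{-b\cdot \sum_i n_i d_{i,j}}$ is again a character of $G_j$. The counting argument from (1) applies verbatim: a nontrivial character of $G_j$ takes a value $\neq1$ on at least $N_j/2\ge 2$ elements when $N_j\ge 3$. Since every $G_j\neq\ZZ_2$ and each $G_j$ is nontrivial (so $N_j\ge 3$), a non-identity $g$ has at least two non-unit eigenvalues and hence is not a (quasi-)reflection.

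The main point needing care is the character-counting step: showing that the set where a nontrivial character of $G_j$ differs from $1$ has size at least $N_j/2$. This follows because that set is the complement of a proper subgroup, whose size is at most $N_j/2$.
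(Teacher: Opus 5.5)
Your proof is correct, and its key step differs from the paper's. The paper invokes the classification of finite-order quasi-reflections on AS-regular algebras with Hilbert series $(1-t)^{-n}$ from \cite[Theorem 3.1]{kkz1}. By that result, either $g|_{\mc{A}_1}$ is a reflection, or $g$ is a ``mystic'' reflection of order $4$ with eigenvalues $i,-i,1,\dots,1$. The paper excludes the second case by noting that the eigenvalues of $\overline{\mc{L}}_{x_{i,\mathbf 0}}$ split into geometric progressions, and then simply asserts that the reflection case forces $G_k=\ZZ_2$, $d_{i,k}=1$ and $d_{i,j}=0$ for $j\neq k$. Parts (2) and (3) rest on the same eigenvalue remark.

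You instead use that every element of $\overline{\mc{G}}$ is diagonal in the PBW generators $Y$, so $\Tr_{\mc{A}}(g,t)=\prod_{(j,b)}(1-\lambda_{j,b}t)^{-1}$. The pole order at $t=1$ then gives ``quasi-reflection $\iff$ exactly one eigenvalue $\neq 1$'' directly. This is self-contained, rules out mystic reflections automatically, and yields (2) at once. The paper's route is shorter on the page only because it outsources the structural work to \cite{kkz1}, whose generality (non-diagonal automorphisms) is not needed here.

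Your character-counting step (the complement of the kernel of a nontrivial character of $G_j$ has at least $N_j/2$ elements) supplies the justification the paper leaves implicit for the ``forcing'' in (1), and proves (3) quantitatively. Taken literally, the paper's geometric-progression remark would not exclude an eigenvalue pair $\{i,-i\}$, which is a progression with $\eta=i$ and $\xi=-1$. What closes this is exactly your observation that, for each fixed $j$, the eigenvalues are the values of a character, so they form a group of roots of unity containing $1$.

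One small slip: in (3) you assert that each $G_j$ is nontrivial. That need not hold for an arbitrary mesh, since $G_j$ is trivial when all $d_{i,j}=0$. The claim is also unnecessary: any $j$ with $\chi_j$ nontrivial automatically has $G_j$ nontrivial, hence $N_j\ge 3$ under the hypothesis $G_j\neq\ZZ_2$.
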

	\proof (1) As in Theorem \ref{thm}. We have
	\begin{equation}\label{eq diag}
		\overline{\mathcal{L}}_{x_{i, \mathbf{0}}}(y_{j,b})=\xi_{G_j}^{-b\cdot  d_{i,j}}y_{j,b},
	\end{equation}
	for each $y_{j,b}$.
	
	It is easy to check that if  there is some $G_k=\ZZ_2$ and $d_{i,j}$ satisfies that $$d_{i,j} = \begin{cases} 1, & j = k; \\0, & j
		\neq k, \end{cases}$$ then $\overline{\mathcal{L}}_{x_{i, \mathbf{0}}}(y_{k,1})=-y_{k,1}$ and $\overline{\mathcal{L}}_{x_{i, \mathbf{0}}}(y_{j,b})=y_{j,b}$ for $(j,b)\neq (k,1)$. Thus the automorphism $\overline{\mathcal{L}}_{x_{i, \mathbf{0}}}$ is a reflection, and hence a quasi-reflection.
	
	Conversely, Theorem \ref{thm} shows that the algebra $\mc{A}$ is isomorphic to a skew polynomial algebra. By \cite[Theorem 3.1]{kkz1}, if a graded automorphism $g$ on $\mc{A}$ is a quasi-reflection of finite order, then $g$ is in one of the following two cases:
	\begin{enumerate}
		\item[(i)] There is a basis of $ \mc{A}_1$, say $\{z_1, \cdots, z_{N}\}$, such that $g(z_j) = z_j$ for all $j \ge 2$ and $g(z_1) = \xi z_1$ for some $\xi\in \kk^\times$. Namely, $g|_{A_1}$ is a reflection.
		
		\item[(ii)] The order of $g$ is 4 and there is a basis of $ \mc{A}_1$, say $\{z_1, \cdots, z_N\}$, such that $g(z_j) = z_j$ for all $j \ge 3$ and $g(z_1) = i z_1$ and $g(z_2) = -i z_2$ (where $i^2 = -1$).
	\end{enumerate}
	The eigenvalues of the action  $\overline{\mathcal{L}}_{x_{i, \mathbf{0}}}$ on $ \mc{A}_1$ is $\mc{S}=\bigcup_{j=1}^p \mc{S}_j$, where $\mc{S}_j$ is
	\begin{align*}
		\mc{S}_j=&\{\xi_{G_j}^{-b\cdot  d_{i,j}}\mid   b\in G_j\}\\
		=&\{\xi _{i,1}^{-b_1 d_{i,j,1}} \xi _{i,2}^{-b_2 d_{i,j,2}} \dots \xi _{i,m_{s_i}}^{-b_{m_{s_i}} d_{i,j,m_{s_i}}} \mid 0\le b_l\le m_k-1,\; 1\le l\le s_i\}
	\end{align*}

	The set $\mc{S}$  can be grouped into finitely many geometric progressions, each of the form
	\begin{equation}\label{eq xi}
		\eta, \eta\xi, \eta\xi^2,\dots, \eta\xi^{m-1}
	\end{equation}
	with  some $\eta\in \kk^\times$,  $m\in \NN$ and $\xi$ is an $m$-th root of unity.
	So $\overline{\mathcal{L}}_{x_{i, \mathbf{0}}}$ can not be an automorphism as in case (ii). It must be a reflection, forcing there exists $1\le k\le n$, such that $G_k = \mathbb{Z}_2$ and $d_{i,k} = 1$, with $d_{i,j} = 0$ for $j \neq k$.
	
	(2) and (3) The group $\overline{\mc{G}}$ is generated by  $\{\overline{\mc{L}}_{x_{i,\mathbf{0}}}\mid 1\le i\le p\}$. Each $\overline{\mc{L}}_{x_{i,\mathbf{0}}}$ acts on
	$\{y_{j,b}\mid 1\le j\le p, b\in G_i\}$ diagonally, then for any $g\in \overline{\mc{G}}$, its   eigenvalues is also  of the form (\ref{eq xi}). If it is a quasi-reflection, it must be a reflection and there is some $G_k=\ZZ_2$, $1\le k\le p$. In other words, if each $G_i\neq \ZZ_2$, $1\le i\le p$, then $\overline{\mc{G}}$ contains no reflections.
	
	\qed
	
	\subsection{Reflection Groups}

	Let $G$ be a finite group acting on a connected graded $k$-algebra $A$
	homogeneously and faithfully,
	i.e.\ the group homomorphism
	\[
	G \longrightarrow \operatorname{GrAut}(A)
	\]
	is injective. This action makes $A$ into a left $\kk G$-module algebra.

	We write $\widehat{G} = \operatorname{Hom}(G, \kk^\times)$ for the character group of $G$, consisting of all group homomorphisms $G \to \kk^\times$.
	For each   $\chi \in \Hom_G(G,\kk^\times)$, we define
	\begin{equation}\label{Achi}
		A_\chi = \{ a \in A \mid g \cdot a = \chi(g) a,\text{ for all } g \in G \}.
	\end{equation}
	The dual Hopf algebra of $\kk G$ is $K = (\kk G)^*$. It is well known that a left $\kk G$-action on an
	algebra $A$ is equivalent to a right $K$-coaction on $A$ and $\widehat{G}$ is isomorphic to the group $G(K)$ of grouplike elements in $K$. Hence, (\ref{Achi}) coincides with the coaction-based definition (E0.1.1) in \cite{kz}.

	\begin{hypo}\label{hyp0}
		Assume the following:
		\begin{enumerate}
			\item \(A\) is a noetherian connected graded AS regular algebra that is a domain;
			\item \(G\) is a finite group;
			\item \(G\) acts on \(A\) faithfully and homogeneously by graded algebra automorphisms, so that \(A\) is a left \(\kk G\)-module algebra;
			\item \(G\) acts on \(A\) as a \emph{reflection group} in the sense that the fixed subring
			$$A^G := \{ a \in A \mid g \cdot a = a,\ \forall g \in G \}.$$
			is again AS-regular.
		\end{enumerate}
	\end{hypo}
	
	\begin{lem}\label{lem j,a}\rm{(\cite[Theorem 0.2]{kz})}
		Assume Hypotheses \ref{hyp0}. Let $R$ be the fixed subring $A^G$.
		\begin{enumerate}
			\item There is a nonzero element $\mathbf{j}_{A,G} \in A$, unique up to a nonzero scalar, such that
			$A_{\mathrm{hdet}^{-1}}$  is a free  $R$-module of rank one on both sides generated by  $\mathbf{j}_{A,G}$.
			\item There is a nonzero element $\mathbf{a}_{A,G}  \in A$, unique up to a nonzero scalar, such that
			$A_{\mathrm{hdet}}$ is a free $R$-module of rank one on both sides generated by  $\mathbf{a}_{A,G}$.
			\item The products $\mathbf{j}_{A,G}\mathbf{a}_{A,G}$ and $\mathbf{a}_{A,G}\mathbf{j}_{A,G}$ in $A$ are elements of $R$ that are either equal, or they differ only by a nonzero scalar in $\kk$, or equivalently,
			\[
			\mathbf{j}_{A,G}\mathbf{a}_{A,G} = \kk^\times \cdot \mathbf{a}_{A,G}\mathbf{j}_{A,G}.
			\]
		\end{enumerate}
	\end{lem}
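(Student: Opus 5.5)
Since this lemma is quoted from \cite[Theorem 0.2]{kz}, the plan is to reconstruct its proof along the standard route, using only Hypothesis \ref{hyp0}. Set $R=A^G$. First, $A$ is a finitely generated graded $R$-module on both sides, because $G$ is finite, $\operatorname{char}\kk=0$ and $A$ is noetherian. Next I would show that $A$ is free over $R$ on each side. Both $A$ and $R$ are AS-regular of the same GK-dimension, and $A$ is Cohen--Macaulay over itself, hence maximal Cohen--Macaulay as an $R$-module. A graded Auslander--Buchsbaum formula over the AS-regular algebra $R$ then forces $A$ to have projective dimension $0$, so $A$ is graded free.

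Then I would decompose $A=\bigoplus_{\chi}A_\chi\oplus A'$, where $A'$ collects the isotypic components of the nonlinear irreducible characters. The decomposition uses the Reynolds-type projections $e_\chi=\frac{1}{|G|}\sum_g\chi(g^{-1})g$, which are $R$-bimodule maps. Hence each $A_\chi$ is a graded direct summand of a free $R$-module, and so it is itself graded free on each side, being projective over a connected graded algebra.

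To compute the rank of $A_\chi$, I would pass to the graded quotient division rings. Because $A$ is a noetherian domain, $Q(A)^G=Q(R)$, and by a normal basis argument (Galois descent for the outer action of $G$) $Q(A)\cong Q(R)\otimes \kk G$ as $G$-modules. A linear character therefore occurs with multiplicity one, so $A_\chi$ is free of rank one on the left and on the right. Its generator is the unique, up to scalar, nonzero element of minimal degree, and the left and right generators coincide because that degree is the same. Applying this to $\chi=\hdet^{-1}$ gives $\mathbf{j}_{A,G}$, and applying it to $\chi=\hdet$ gives $\mathbf{a}_{A,G}$. Here $\hdet$ restricted to $G$ is a group homomorphism $G\to\kk^\times$, so it is a genuine character. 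This proves (1) and (2).

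For (3), $\mathbf{j}\mathbf{a}$ and $\mathbf{a}\mathbf{j}$ lie in $A_{\hdet^{-1}\hdet}=A_{\text{triv}}=R$ and have the same degree $\deg\mathbf{j}+\deg\mathbf{a}$. The relations $\mathbf{j}R=R\mathbf{j}$ and $\mathbf{a}R=R\mathbf{a}$ make $\mathbf{j}$ and $\mathbf{a}$ normal in $R$-sense, with twisting automorphisms of $R$. Consequently $A_{\hdet^{-1}}A_{\hdet}=\mathbf{j}\mathbf{a}R$ and $A_{\hdet}A_{\hdet^{-1}}=\mathbf{a}\mathbf{j}R$ are principal. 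The main obstacle is to show that these two principal ideals coincide, for then two generators of the same degree in the domain $R$ differ by a scalar. My first attempt would be to compare them inside $Q(A)$. Conjugation by $\mathbf{j}$ preserves $A_\chi$-isotypic decompositions, because it commutes with the $G$-action up to the scalar $\hdet^{-1}(g)$. It should therefore send $A_{\hdet}$ to $A_{\hdet}$, giving $\mathbf{j}\mathbf{a}\mathbf{j}^{-1}\in\kk^\times\mathbf{a}$ by the rank-one and minimal-degree uniqueness. If that fails, I would fall back on the explicit description of $\mathbf{j}$ as a product of the elements cut out by the reflections.
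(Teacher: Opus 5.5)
The paper gives no proof of this lemma; it only cites \cite[Theorem 0.2]{kz}. Several parts of your outline are sound: freeness of $A$ over $R$ via Auslander--Buchsbaum, splitting off $A_\chi$ with the Reynolds projections $e_\chi$, and identifying the generator as the unique minimal-degree element (which also makes the left and right generators agree). However, the two steps that carry the content are not established.

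\emph{Rank one.} Galois descent (the normal basis theorem) needs $G$ to act on $Q(A)$ by X-outer automorphisms. That fails in general, already in this paper. Take $p=2$, $G_1=G_2=\ZZ_2$, $d_{12}=d_{21}=1$, $d_{11}=d_{22}=0$, a case where $\overline{\mathcal G}$ is a reflection group. The defining relations give $(x_{1,0}+x_{1,1})x_{2,b}=x_{2,b+1}(x_{1,0}+x_{1,1})$, and the $x_{1,c}$ commute with each other. So $\overline{\mathcal L}_{x_{1,0}}$ is conjugation by $x_{1,0}+x_{1,1}$ on $Q(\mathcal A)$.

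For X-inner actions no normal basis theorem is available in general. For example, conjugation by $1+i$ generates a group of order $4$ acting on the real quaternions; its fixed ring $\mathbb{R}(i)$ has index $2$, and the character $g\mapsto -1$ does not occur at all. The division ring argument only gives the upper bound: if $0\neq u\in Q(A)_\chi$ then $Q(A)_\chi=Q(R)u$, so $A_\chi$ has rank at most one.

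Nonvanishing of $A_{\hdet^{\pm1}}$ has to come from the graded structure. One route, over a field containing the relevant roots of unity, is as follows. Let $1\neq g$ have order $n$.
\begin{itemize}
\item The eigenvalues of $g$ occurring in $A$ form a subgroup of the $n$th roots of unity (because $A$ is a domain), hence all of them (because the action is faithful).
\item Right multiplication by a homogeneous eigenvector $u$ embeds $A^{\langle g\rangle}$ into each eigenspace, and right multiplication by $u^{n-1}$ embeds the eigenspace back into $A^{\langle g\rangle}$.
\item So all eigenspaces have the same multiplicity, and $\Tr_A(g,t)$ has a pole of order $<\GKdim A$ at $t=1$.
\end{itemize}
Molien's formula $H_{A_\chi}(t)=|G|^{-1}\sum_{g}\chi(g^{-1})\Tr_A(g,t)$ then gives $A_\chi$ the same leading coefficient as $R$. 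Hence the free module $A_\chi$ has rank exactly one.

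\emph{Part (3).} Your conjugation argument is circular. Conjugation by $\mathbf j$ commutes with $G$ and preserves $R$, so it maps $Q(A)_{\hdet}=Q(R)\mathbf a$ into itself. But this only gives $\mathbf j\mathbf a\mathbf j^{-1}=q\mathbf a$ for some $q\in Q(R)$, not a scalar.

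To conclude $q\in\kk^\times$ you need $\mathbf j\mathbf a\mathbf j^{-1}\in A$, i.e.\ $\mathbf j\mathbf a\in A\mathbf j$. By (1) and (2), $\mathbf jA\cap R=\mathbf j\mathbf aR$ and $A\mathbf j\cap R=\mathbf a\mathbf jR$. So $\mathbf j\mathbf a\in A\mathbf j$ is equivalent to $\mathbf j\mathbf aR=\mathbf a\mathbf jR$, which is statement (3) itself.

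The fallback is unavailable too. There is no general hyperplane factorization of $\mathbf j$ in the noncommutative setting, where quasi-reflections can be of the order-$4$ type (ii) recalled in the proof of Lemma~\ref{lem ref}, and these fix no hyperplane.

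A genuinely new input is needed. One sufficient route is to prove that $\mathbf j$ (or $\mathbf a$) is normal in $A$. Then $\mathbf j\mathbf a\in\mathbf jA=A\mathbf j$, and your minimal-degree argument closes.
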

	
	The above lemma allows us to define the following fundamental concepts.
	
	\begin{defn}\label{defn j,a}
		Assume Hypotheses \ref{hyp0}.
		\begin{enumerate}
			\item The element $\mathbf{j}_{A,G}$ in Lemma \ref{lem j,a} (1) is called the \textit{Jacobian} of the $G$-action on $A$.
			\item The element $\mathbf{a}_{A,G}$ in Lemma \ref{lem j,a} (2) is called the \textit{reflection arrangement} of the $G$-action on $A$.
			\item The element $\mathbf{j}_{A,G}\mathbf{a}_{A,G}$, or equivalently, $\mathbf{a}_{A,G}\mathbf{j}_{A,G}$, in Lemma \ref{lem j,a} (3) is called the \textit{discriminant} of the $G$-action on $A$, and denoted by $\bm{\delta}_{A,G}$.
		\end{enumerate}
	\end{defn}	
	The above concepts are well defined up to a nonzero scalar in $\Bbbk$.
	
	The notion of the Jacobian (respectively, the reflection arrangement, the discriminant) was originally introduced for Hopf algebra actions
	in \cite{kz}. In this paper we restrict our attention to group actions,
	accordingly, we only introduce the Jacobian (respectively, the reflection arrangement, the discriminant) in this restricted setting.
	
	In the classical (commutative) setting, when $G$ is a reflection group acting on
	a vector space $V$ over the field of complex numbers $\C$, the Jacobian (respectively, the
	ref lection arrangement, the discriminant) in Definition \ref{defn j,a} is essentially equivalent to
	the classical Jacobian determinant of the basic invariants of $G$ in the commutative polynomial ring $\C[V^*]$ (respectively, the reflection arrangement, the discriminant of
	the $G$-action).

	\begin{thm} Let $\mathcal{T}=((G_i)_{1\le i\le p}, D)$ be an idempotent trivial affine mesh. It determines a square-free distributive solution $(X,r)$ and an  algebra $\mc{A}=\mc{A}(X,r)$. The induced automorphism group $\overline{\mc{G}}$ of $\mc{A}$ is   a reflection group if and only if
		$G_i \cong \mathbb{Z}_2$ for all $1 \le i \le p$, and for each $i$,
		$d_{i,j} \neq 0$ for exactly one $j$. In this case, Hypotheses \ref{hyp0} holds,
		$A^{\overline{\mathcal{G}}}$ is a polynomial algebra, and that
		\begin{align*}
			\mathbf{j}_{\mathcal{A},\overline{\mathcal{G}}}
			&= \mathbf{a}_{\mathcal{A},\overline{\mathcal{G}}}
			= \prod_{i=1}^p (x_{i,0} - x_{i,1}), \\
			\boldsymbol{\delta}_{\mathcal{A},\overline{\mathcal{G}}}
			&=\prod_{i=1}^p (x_{i,0} - x_{i,1})^2.
		\end{align*}		
	\end{thm}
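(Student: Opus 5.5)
The approach is to work throughout in the diagonal basis $Y$ of Theorem \ref{thm}. There $\mathcal{A}$ is a skew polynomial algebra, and every element of $\overline{\mc{G}}$ acts diagonally on $\{y_{j,b}\}$. On the block $\{y_{j,b}\mid b\in G_j\}$, the generator $\overline{\mc{L}}_{x_{i,\mathbf{0}}}$ acts through the character $\chi_{i,j}:b\mapsto \xi_{G_j}^{-b\cdot d_{i,j}}$ of $G_j$. A general $g\in\overline{\mc{G}}$ therefore acts on block $j$ by a product of such characters, which is again a character of $G_j$.

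I will use the Shephard--Todd--Chevalley theorem for diagonal actions on skew polynomial rings (Kirkman--Kuzmanovich--Zhang). It says that $\mathcal{A}^{\overline{\mc{G}}}$ is AS-regular if and only if $\overline{\mc{G}}$ is generated by quasi-reflections. By Lemma \ref{lem ref}(2), every quasi-reflection in $\overline{\mc{G}}$ is a reflection. So the problem reduces to the question: when is $\overline{\mc{G}}$ generated by diagonal reflections?

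\emph{Necessity.} Since $G_j=\langle d_{i,j}\mid i\rangle$, for each $j$ some $\chi_{i,j}$ is a nontrivial character of $G_j$. By faithfulness, every $y_{j,b}$ with $b\neq\mathbf{0}$ is moved by some element of $\overline{\mc{G}}$. If $\overline{\mc{G}}$ is generated by reflections, each such $y_{j,b}$ must be moved by some reflection $g$. A reflection moves exactly one basis vector, so on block $j$ its character $\chi$ is nontrivial with $|G_j|-|\ker\chi|=1$. Since $|\ker\chi|\le |G_j|/2$ for a nontrivial $\chi$, this forces $|G_j|=2$, that is, $G_j\cong\ZZ_2$ for every $j$.

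Then $\overline{\mc{G}}\subseteq(\ZZ_2)^p$ acts by sign changes on the $y_{j,1}$, and it is generated by the rows of $D$ viewed as vectors in $\mathbb{F}_2^p$. Being generated by reflections then means that $\overline{\mc{G}}$ contains every coordinate sign change $e_j$, i.e.\ that the rows of $D$ span $\mathbb{F}_2^p$.

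The step I expect to be the main obstacle is deducing from this the stated form ``$d_{i,j}\ne0$ for exactly one $j$''. A priori only invertibility of $D$ over $\mathbb{F}_2$ follows, even after using idempotence $d_{i,i}=0$ and the column-generation condition. I would first try to show that the generators $\overline{\mc{L}}_{x_{i,\mathbf{0}}}$ must themselves be reflections, which is exactly the condition in Lemma \ref{lem ref}(1). If that fails, I would test small cases such as $p=3$ to check whether the statement should read ``$D$ invertible mod $2$''.

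\emph{Sufficiency and invariants.} If each row of $D$ has a single nonzero entry and every $G_i\cong\ZZ_2$, then Lemma \ref{lem ref}(1) shows that each generator $\overline{\mc{L}}_{x_{i,\mathbf{0}}}$ is a reflection. By the column-generation condition, every $j$ occurs as the position of the nonzero entry of some row, so $\overline{\mc{G}}=(\ZZ_2)^p$ acting by $y_{j,1}\mapsto -y_{j,1}$, with all $y_{j,0}$ fixed. Hence
\[
\mathcal{A}^{\overline{\mc{G}}}=\kk\langle y_{j,0},\,y_{j,1}^2\rangle .
\]
From the relations of Theorem \ref{thm}, all scalars involved are $\pm1$ and become $1$ for these generators. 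Therefore the invariant ring is a commutative polynomial ring in $2p$ variables, and Hypotheses \ref{hyp0} hold.

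\emph{Jacobian, arrangement and discriminant.} Every hdet computed from the previous Proposition is $(-1)^{\sum_j d_{i,j}}=-1$ on each generator, so $\hdet=\hdet^{-1}$ is the sign character $e_j\mapsto-1$. The semi-invariants of this character form the free module $\bigl(\prod_i y_{i,1}\bigr)R$, and by uniqueness in Lemma \ref{lem j,a} this gives
\[
\mathbf{j}=\mathbf{a}=\prod_i y_{i,1}=\prod_i(x_{i,0}-x_{i,1}).
\]
Since the $y_{i,1}$ commute up to sign, the discriminant is $\boldsymbol{\delta}=\prod_i (x_{i,0}-x_{i,1})^2$ up to a scalar.
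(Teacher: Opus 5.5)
\textbf{The step you flagged cannot be completed, because the ``only if'' direction is false as stated.} Your first fallback, showing that each $\overline{\mathcal{L}}_{x_{i,\mathbf{0}}}$ must itself be a reflection, is exactly the unproved assertion the paper's proof rests on. The paper claims that $\overline{\mathcal{G}}$ is a reflection group if and only if every $\overline{\mathcal{L}}_{x_{i,\mathbf{0}}}$ is a reflection, and then applies Lemma~\ref{lem ref}(1). This fails already for $p=3$.

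Take $G_1=G_2=G_3=\ZZ_2$ and
\[
D=\begin{pmatrix}0&1&1\\ 1&0&0\\ 0&1&0\end{pmatrix}.
\]
It has zero diagonal and no zero column, so it is an idempotent trivial affine mesh. By Theorem~\ref{thm}(2), all three generators fix every $y_{j,0}$, and:
\begin{itemize}
\item $\overline{\mathcal{L}}_{x_{1,\mathbf{0}}}$ negates $y_{2,1}$ and $y_{3,1}$;
\item $\overline{\mathcal{L}}_{x_{2,\mathbf{0}}}$ negates $y_{1,1}$;
\item $\overline{\mathcal{L}}_{x_{3,\mathbf{0}}}$ negates $y_{2,1}$.
\end{itemize}
So $\overline{\mathcal{L}}_{x_{1,\mathbf{0}}}\overline{\mathcal{L}}_{x_{3,\mathbf{0}}}$ negates $y_{3,1}$ alone. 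Hence $\overline{\mathcal{G}}\cong\ZZ_2^{3}$ is the full group of sign changes of $y_{1,1},y_{2,1},y_{3,1}$, generated by three reflections.

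By Theorem~\ref{thm}(1), the six elements $y_{j,0},y_{j,1}^2$ pairwise commute. Therefore $\mathcal{A}^{\overline{\mathcal{G}}}=\kk[y_{j,0},y_{j,1}^2\mid j=1,2,3]$ is a commutative polynomial ring. So $\overline{\mathcal{G}}$ is a reflection group in the sense of Hypotheses~\ref{hyp0}(4), and also in the sense of being generated by reflections. Yet the generator $\overline{\mathcal{L}}_{x_{1,\mathbf{0}}}$ is not a reflection, and row $1$ of $D$ has two nonzero entries. Being a reflection group is a property of the group, not of the chosen generating set.

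\textbf{What your argument does prove is a corrected characterization.} Your route goes through the Kirkman--Kuzmanovich--Zhang Shephard--Todd--Chevalley theorem for skew polynomial rings, which the paper never invokes. It shows that $\overline{\mathcal{G}}$ is a reflection group if and only if every $G_i\cong\ZZ_2$ and $D$ is invertible over $\mathbb{F}_2$. Like the original statement, this tacitly assumes every $G_j\neq 0$.

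Your sufficiency half extends at once. Invertibility of $D$ already gives $\overline{\mathcal{G}}=\ZZ_2^p$ acting by all sign changes of the $y_{j,1}$. Your computations of $\mathcal{A}^{\overline{\mathcal{G}}}$, of $\mathbf{j}_{\mathcal{A},\overline{\mathcal{G}}}=\mathbf{a}_{\mathcal{A},\overline{\mathcal{G}}}=\prod_i(x_{i,0}-x_{i,1})$, and of $\boldsymbol{\delta}_{\mathcal{A},\overline{\mathcal{G}}}=\prod_i(x_{i,0}-x_{i,1})^2$ then go through verbatim. This is because $\hdet(g)=(-1)^{n(g)}$, where $n(g)$ is the number of $j$ with $g(y_{j,1})=-y_{j,1}$. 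So the gap is in the statement, not in your reasoning; the theorem should be amended rather than proved.
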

	\proof 	 The group $\overline{\mc{G}}$ is a reflection group if and only if each $\overline{\mathcal{L}}_{x_{i, \mathbf{0}}}$ is a reflection. This is equivalent to that $G_i \cong \mathbb{Z}_2$ for all $1 \le i \le p$, and for each $i$,
	$d_{i,j} \neq 0$ for exactly one $j$ by Lemma \ref{lem ref}. Note that  $d_{i,j} \in G_{j}$ satisfies that $G_{j}=\left\langle\left\{d_{i,j} \mid 1\leqslant i\leqslant p\right\}\right\rangle$ for every $1\leqslant j\leqslant p$. Therefore, the matrix $D=(d_{i,j})$ is a $(0,1)$-matrix with exactly one $1$ in each row and each column. We denote by $k_i$ the unique column index such that $d_{i,k_i}=1$ for each row $i$.
	
	In this case, the  YB  algebra $\mc{A}(X,r)$ determined by  the trivial affine mesh $\mathcal{T}$ is the algebra generated by
	\[X=\{x_{i,a}\mid 1\leqslant i\leqslant p, a=0,1\},\] subject to the relation such that
	all generators commute with   each other except that
	\[x_{i,a}x_{{k_i},b}=x_{{k_i},b+1}x_{i,a-d_{k_i,i}}, 1\leqslant i\leqslant p.\]
	For $1\leqslant i\leqslant p$, set
	\[
	y_{i,0} = x_{i,0} + x_{i,1}, \qquad y_{i,1} = x_{i,0} - x_{i,1}.
	\]
	Then the set
	$
	Y = \{ y_{i,a} \mid 1 \leqslant i \leqslant p,\ a = 0,1 \}
	$
	also forms a generating set of $\mathcal{A}(X,r)$. The generators $y_{i,a}$ commute with each other except that
	\[
	y_{i,a} \, y_{k_i,b} = (-1)^{\,b - a d_{k_i,i}} \; y_{k_i,b} \, y_{i,a}, 1 \leqslant i \leqslant p.
	\]
	Being isomorphic to a skew polynomial algebra, the algebra $\mc{A}$ is certainly a noetherian connected graded AS regular algebra that is a domain.
	
	The induced automorphism group $\overline{\mc{G}}$ is isomorphic to the gruop $\langle g_1,g_2,\cdots,g_p\mid g_i^2=e, g_ig_j=g_jg_i, 1\le i,j\le p\rangle$. That is, $\overline{\mc{G}}$ is isomorphic to $\mathbb{Z}_2^{\times p}$.
	It acts on $\mc{A}$	as
	\begin{align*}
		g_i(y_{j,0}) &= y_{j,0},\\
		g_i(y_{j,1}) &=\begin{cases}
			-y_{j,1}&j=k_i;\\
			y_{j,1}&j\neq k_i,
		\end{cases}\\
	\end{align*}
	for $1\leqslant j\leqslant p$. The group  $\overline{\mc{G}}$ is a finite group acting faithfully and homogeneously on $\mc{A}$ by graded algebra automorphisms.  The character group $\widehat{\mc{G}}$ is generated by $\{\chi_i\mid 1\leqslant i\leqslant p\}$,
	where $$\chi_i(g_j)=\begin{cases}
		-g_j&j=i;\\
		g_j&j\neq i.
	\end{cases}$$
	It is also isomorphic to $\mathbb{Z}_2^{\times p}$.
	
	Using the PBW basis $$\{y_{1,0}^{q'_1}y_{2,0}^{q'_2}\cdots y_{p,0}^{q'_p}y_{1,1}^{q''_1}y_{2,1}^{q''_2}\cdots y_{p,1}^{q''_p}\mid q'_i,q''_j\geqslant0, 1\leqslant i,j\leqslant p\},$$ one can easily check that $\mc{A}^{\overline{\mathcal{G}}}\cong \kk[y_{1,0},y_{2,0},\dots, y_{p,0}, y^2_{1,1},y^2_{2,1},\dots, y^2_{p,1}]$ is a polynomial algebra. In conclusion, Hypotheses \ref{hyp0} holds.
	
	For each $\chi = \prod_{i=1}^{p} \chi_i^{l_i} \in \widehat{\mathcal{G}}$, where $l_i \in \{0,1\}$, set
	$f_\chi=y_{k_1,1}^{l_1}y_{k_2,1}^{l_2}\cdots y_{k_p,1}^{l_p}$. Then $\mc{A}=\op_{\chi\in \widehat{\mathcal{G}}} \mc{A}_\chi$, where $\mc{A}_\chi=\mc{A}^{\overline{\mathcal{G}}}f_\chi$. With Lemma \ref{lem hddet}, we obtain that $\hdet=\hdet^{-1}=\prod_{i=1}^{p} \chi_i$. Hence, \begin{align*}
		\mathbf{j}_{\mathcal{A},\overline{\mathcal{G}}}
		&= \mathbf{a}_{\mathcal{A},\overline{\mathcal{G}}} = \prod_{i=1}^p y_{i,1}
		= \prod_{i=1}^p (x_{i,0} - x_{i,1}), \\
		\boldsymbol{\delta}_{\mathcal{A},\overline{\mathcal{G}}}
		&=\prod_{i=1}^p (x_{i,0} - x_{i,1})^2.
	\end{align*}		\qed

	\subsection{Non-commutative Auslander Theorem} In the previous sections we examined quasi-reflections and reflection groups in the context of distributive  YB  algebras. We now turn to a noncommutative analogue of the classical Auslander theorem.
	
	\begin{thm} Let $\mathcal{T}=((G_i)_{1\le i\le p}, D)$ be an idempotent trivial affine mesh, it determines a square-free distributive solution $(X,r)$ and an  algebra $\mc{A}=\mc{A}(X,r)$, where $X=\{x_{i,a}\mid 1\le i\le p, a\in G_i\}$.
		Let $\overline{\mc{G}}$ be the induced automorphism group $\mc{A}$.
		If $G_i\neq \ZZ_2$, $1\le i\le p$, then there is an isomorphism $$\mc{A} \# \overline{\mc {G}} \cong \operatorname{End}_{\mc{A}^{\overline{\mc {G}}}}(\mc{A}).$$
	\end{thm}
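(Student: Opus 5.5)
The plan is to invoke the standard criterion that reduces the Auslander map to the absence of reflections. For a finite group $G$ acting homogeneously on a noetherian connected graded AS-regular algebra $A$ that is a domain, the map $A\# G\to \operatorname{End}_{A^G}(A)$ is an isomorphism provided $G$ contains no quasi-reflections. This holds when $A$ is a skew polynomial ring, a setting treated in the literature on the noncommutative Auslander theorem. Equivalently, in the language of pertinency from \cite{bhz,bhz1}, one needs $\mathrm{p}(A,G)\ge 2$.

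\textbf{Step 1: hypotheses.} By Theorem \ref{thm}(1), $\mc{A}$ is a skew polynomial algebra $\kk_{\mathbf q}[y_{i,a}]$ on $N=\sum_i N_i$ generators. It is therefore noetherian, a domain, and AS-regular of dimension $N$. The group $\overline{\mc{G}}$ is finite, since it is generated by the commuting diagonal automorphisms $\overline{\mc{L}}_{x_{i,\mathbf 0}}$, each of finite order because its eigenvalues are roots of unity. By construction it acts faithfully and homogeneously.

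\textbf{Step 2: no quasi-reflections.} This is immediate from Lemma \ref{lem ref}. Part (2) says every quasi-reflection in $\overline{\mc{G}}$ is a reflection. Part (3) says that, since $G_i\neq\ZZ_2$ for all $i$, $\overline{\mc{G}}$ contains no reflections. Hence $\overline{\mc{G}}$ contains no quasi-reflections.

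\textbf{Step 3: applying the criterion.} If I cannot cite a ready-made theorem for skew polynomial rings acted on diagonally, I would verify $\mathrm{p}(\mc{A},\overline{\mc{G}})\ge 2$ directly, using the fact that $\overline{\mc{G}}$ is abelian and acts diagonally on the $y$-variables. Concretely:
\begin{itemize}
\item The quotient $\mc{A}\#\overline{\mc{G}}/(f)$, with $f=\sum_g 1\# g$, has GK dimension at most $N-2$ exactly when the two-sided ideal $(f)$ contains enough elements.
\item For each $g\ne 1$, let $S_g$ be the set of variables $y_{j,b}$ with $g(y_{j,b})\ne y_{j,b}$. By Step 2, each $S_g$ has at least two elements, since a single moved variable would make $g$ a reflection.
\item Because the action is diagonal and $\mc{A}$ is a skew polynomial ring, the monomial multiples $y_{j,b}^{e}\,f\,y_{j,b}^{e'}$ generate, modulo lower terms, elements of the form $\sum_g \chi(g)\,1\# g$. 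This lets one kill the summands $1\# g$ after localizing at a monomial.
\item The quotient is then supported on the locus where, for every $g\neq 1$, all variables in $S_g$ vanish, a set of codimension at least $2$.
\end{itemize}

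\textbf{Main obstacle.} I expect Step 3 to be the hardest part, namely justifying that the absence of quasi-reflections gives the Auslander isomorphism for this noncommutative, diagonal action. The cleanest route is to cite the known theorem for diagonal actions on skew polynomial rings. Failing that, I would run the GK-dimension estimate of $\mc{A}\#\overline{\mc{G}}/(f)$ directly. That estimate works by decomposing the quotient into eigenspaces for the characters of the abelian group $\overline{\mc{G}}$ and bounding each piece by a quotient of a skew polynomial ring in which at least two generators are killed.
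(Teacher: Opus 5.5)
Your proposal is correct and follows essentially the same route as the paper. The paper also uses Lemma \ref{lem ref} to see that $\overline{\mc{G}}$ contains no reflections (hence, for this diagonal action, no quasi-reflections), and then, since $\mc{A}$ is a skew polynomial algebra on which $\overline{\mc{G}}$ acts diagonally on the $y_{i,a}$, cites \cite[Theorem 5.5]{bhz} for $\mc{A}\#\overline{\mc{G}}\cong \operatorname{End}_{\mc{A}^{\overline{\mc{G}}}}(\mc{A})$, so your fallback GK-dimension sketch is unnecessary; if you did pursue it, the relevant locus is the union (not the intersection) over $g\neq 1$ of the loci where the variables in $S_g$ vanish, which still has codimension $\min_{g\neq 1}|S_g|\ge 2$.
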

	\proof By Lemma \ref{lem ref}, if $G_i\neq \ZZ_2$, $1\le i\le p$, then the group $\overline{\mc{G}}$ contains no reflections. The algebra $\mc{A}$ is isomorphic to a skew polynomial algebra and the permutation group $\mc{G}$, hence $\overline{\mc{G}}$, acts on the generators $Y=\{y_{i,a}=\sum_{b\in G_i}\xi_{G_i}^{a\cdot b}x_{i,b}\mid 1\leqslant i\leqslant p,a\in G_i\}$ diagonally, the isomorphism $$\mc{A} \# \overline{\mc {G}} \cong \operatorname{End}_{\mc{A}^{\overline{\mc {G}}}}(\mc{A})$$ holds by \cite[Theorem 5.5]{bhz}. \qed
	
	In \cite{bhz1}, the notion of pertinency was introduced to provide a computable
	criterion for the noncommutative Auslander theorem; for its formal definition,
	we refer to \cite[Definition 0.1]{bhz1}. In the following, we will compute a
	lower bound on the pertinency of the induced automorphism group on a
	distributive YB algebra. The main tool is the radical of group actions
	introduced in \cite{hz19}. We briefly recall the notions that will be used
	later.
	
	Let $G$ be a finite group acting on a noetherian algebra $A$. Two sequences
	$(a_1,\dots,a_n)$ and $(b_1,\dots,b_n)$ of elements of $A$ are called
	\it{pertinent} if
	\[
	\sum_{i=1}^{n}a_i\,(g\cdot b_i)=0\quad\text{for all }1\neq g\in G,
	\]
	We write
	$(a_1,\dots,a_n)\overset{G}{\sim}(b_1,\dots,b_n)$ for pertinent sequences. The \emph{radical} of the
	action is the ideal
	\[
	\mathfrak{r}(A,G)=\Big\{\,\sum\nolimits_{i=1}^{n}a_i b_i \;\Big|\;
	(a_1,\dots,a_n)\overset{G}{\sim}(b_1,\dots,b_n),\ n\ge 1\,\Big\}\subseteq A,
	\]
	and the quotient algebra
	\[
	\mathfrak{R}(A,G)=A/\mathfrak{r}(A,G)
	\]
	is called the \emph{pertinency algebra}. When $A$ has finite GK-dimension, the
	pertinency of the $G$-action on $A$ is
	\[
	\mathfrak{p}(A,G)=\GKdim(A)-\GKdim\big(\mathfrak{R}(A,G)\big),
	\]
	where $\GKdim$ denotes the Gelfand--Kirillov dimension.

	We now compute a lower bound on the pertinency of the induced automorphism
	group on a distributive YB algebra.
	
	Let $\mathcal{T}=\left(\left(G_{i}\right)_{1\le i\le p}, D\right)$ be a trivial affine mesh, where each $G_{i}=\ZZ_{m}$ ($m\geqslant 3$) is a cyclic group and
	$$D=\begin{pmatrix}
		0 & 0 & 0 & \cdots & 0 & 1 \\
		1 & 0 & 0 & \cdots & 0 & 0 \\
		0 & 1 & 0 & \cdots & 0 & 0 \\
		\vdots & \vdots & \vdots & & \vdots & \vdots \\
		0 & 0 & 0 & \cdots & 0 & 0 \\
		0 & 0 & 0 & \cdots & 1 & 0
	\end{pmatrix}$$ is a $p\times p$ matrix.
	
	The associated AS-regular algebra is isomorphic to 	$A=T(V)/(\bold{R})$, where $V$ is the vector space with a basis $X=\{x_{i,a}| 1\leqslant i\leqslant p, a\in \ZZ_{m}\}$ and
	$$\bold{R}=\{x_{i,a}x_{j,b}-x_{j,b+d_{i,j}}x_{i,a-d_{j,i}}\mid 1\leqslant i\leqslant p,a, b\in \ZZ_{m}\}.$$
	The permutation group $\mc{G}=\langle \mc{L}_x\mid x\in X\rangle$ acts on $A$ as
	$$\mc{L}_{x_{i,a}}(x_{j,b})=x_{j,b+d_{i,j}},$$
	for any $x_{i,a},x_{j,b}\in X$. Then $\overline{\mc{G}}$ is isomorphic to the gruop $\langle g_1,g_2,\cdots,g_p\mid g_i^m=1, g_ig_j=g_jg_i, 1\le i,j\le p\rangle$ with the action on
	$A$ as $$g_i(x_{j,b})=x_{j,b+d_{ij}},$$
	for any $x_{j,b}\in X$.
	
	Let $\xi$ be a primitive $m$-th root of unity. For each $1\le i\le p$, $a\in \ZZ_m$, set  $y_{i,a}=\sum_{b=0}^{m-1}\xi^{a\cdot b}x_{i,b}$. Following from Theorem \ref{thm},
	\begin{equation}\label{e yy}
		y_{j,b}y_{i,a}=\xi^{b\cdot d_{i,j}-a\cdot d_{j,i}}y_{i,a}y_{j,b}.
	\end{equation}
	\begin{equation}\label{e gy}
		g_i(y_{j,b})=\xi^{-bd_{i,j}}y_{j,b}.
	\end{equation}
	
	\begin{lem}\label{L0}
		If each $1\leqslant i\leqslant p$ satisfies $\gcd(a_i,m)=1$, then    $y_{1,a_1}^{m}y_{2,a_2}^{m}\cdots y_{p,a_p}^{m}\in \mk{r}(A,\mc{G})$.
	\end{lem}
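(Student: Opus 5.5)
The plan is to write down an explicit pair of pertinent sequences whose "product sum" $\sum_c a_c b_c$ is a nonzero scalar multiple of $y_{1,a_1}^{m}y_{2,a_2}^{m}\cdots y_{p,a_p}^{m}$. Since $\operatorname{char}\kk=0$, this shows the monomial lies in $\mk{r}(A,\mc{G})$. Throughout, the action of $\mc{G}$ on $A$ is through $\overline{\mc{G}}\cong\ZZ_m^{\times p}$, so I work with $\overline{\mc{G}}$.

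\textbf{Step 1: reduce to a diagonal action.} Set $u_j=y_{j,a_j}$ for $1\le j\le p$. By (\ref{e gy}) and the shape of $D$, we have $d_{i,i-1}=1$ (with $d_{1,p}=1$) and all other $d_{i,j}=0$. Hence $g_{j+1}$ (indices mod $p$) multiplies $u_j$ by $\xi^{-a_j}$, and every other $g_i$ fixes $u_j$. So for
\[
g=g_1^{e_1}\cdots g_p^{e_p}
\]
we get $g(u_j)=\omega_j(g)u_j$ with $\omega_j(g)=\xi^{-a_je_{j+1}}$.

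Because $\gcd(a_j,m)=1$, the value $\omega_j(g)$ is a primitive $m'$-th root of unity for some $m'\mid m$. Moreover, $\omega_j(g)=1$ if and only if $e_{j+1}\equiv 0 \pmod m$. Consequently, $\omega_j(g)=1$ for all $j$ forces $g=1$.

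\textbf{Step 2: build the sequences.} Index by $c=(c_1,\dots,c_p)\in\{0,\dots,m-1\}^p$ and write $u^{c}=u_1^{c_1}\cdots u_p^{c_p}$. Let $\mathbf m=(m,\dots,m)$. By (\ref{e yy}) the $u_j$ skew-commute with nonzero scalars, so
\[
u^{c}u^{\mathbf m-c}=q_c\,u^{\mathbf m}
\]
for some $q_c\in\kk^\times$. Put $a_c=q_c^{-1}u^{c}$ and $b_c=u^{\mathbf m-c}$. Then $\sum_c a_cb_c=m^p\,u^{\mathbf m}$.

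For $g\neq 1$, since $\omega_j(g)^m=1$,
\[
\sum_c a_c\,g(b_c)=\Big(\sum_c \prod_j\omega_j(g)^{-c_j}\Big)u^{\mathbf m}=\prod_{j=1}^p\Big(\sum_{c_j=0}^{m-1}\omega_j(g)^{-c_j}\Big)u^{\mathbf m}.
\]
By Step 1 some $\omega_j(g)$ is a nontrivial $m$-th root of unity, so the corresponding geometric sum vanishes. Hence the two sequences are pertinent, and $u^{\mathbf m}\in\mk{r}(A,\mc{G})$.

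\textbf{Main obstacle.} The main point to verify is Step 1: that the character vector $(\omega_1,\dots,\omega_p)$ separates every nontrivial element of $\overline{\mc{G}}$. This is exactly where the coprimality hypothesis and the permutation shape of $D$ enter; each $g_{j+1}$ is seen through $u_j$ alone. The skew-commutation scalars $q_c$ need no control beyond being nonzero, since they are absorbed into the $a_c$.
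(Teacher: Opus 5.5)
Your proof is correct, and it shares the paper's skeleton. Both arguments pair the monomial sequences $u^{c}$ and $u^{\mathbf m-c}$, so each product is a nonzero multiple of $u^{\mathbf m}$. The paper puts $u^{\mathbf m-r}$ on the left and $k_r u^{r}$, the factor acted on, on the right. Your identification of $\mc{G}$ with $\overline{\mc{G}}$ is harmless, since $\mc{G}$ permutes the basis $X$ of $A_1$ and so acts faithfully.

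The difference lies in how the coefficients are produced.
\begin{itemize}
\item The paper builds the $m^p\times m^p$ matrix $\Gamma$ of combined scalars (character value times commutation factor). It proves $\Gamma$ invertible by iterating Lemma~\ref{lemma 1}, then takes an unspecified nonzero vector $K$ in the kernel of $\overline{\Gamma}$. Invertibility forces the row indexed by $\mathbf 0$, which is the coefficient of $u^{\mathbf m}$ in the product sum, to be nonzero.
\item You write the coefficients down explicitly as $q_c^{-1}$. The vanishing then reduces to a product of geometric sums of nontrivial $m$-th roots of unity, i.e.\ character orthogonality for $\ZZ_m^{\times p}$ acting diagonally through your $\omega_j$.
\end{itemize}
These are really the same construction. $\Gamma$ is a Kronecker product of DFT matrices in the primitive roots $\xi^{-a_i}$, times a diagonal matrix of commutation scalars. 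So the paper's $K$ is, up to scale, the vector of inverse commutation scalars. That is your $q_c^{-1}$ with the roles of the two factors swapped, so you simply exhibit the kernel vector whose existence the paper proves.

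What your route buys is that no determinant is needed, and the commutation scalars never have to be computed, only known to be nonzero. This is not merely cosmetic. The scalar the paper records for $u^{\mathbf m-r}u^{r}$, namely $\xi^{\sum_{i=1}^{p-1}r_ir_{i+1}a_i}$, appears to omit the contribution of the pair $(u_1,u_p)$. For $p\ge 3$, because $d_{1,p}=1$, one has $y_{p,b}\,y_{1,a}=\xi^{b}\,y_{1,a}\,y_{p,b}$. The paper's argument therefore needs $\Gamma$ rebuilt with the correct diagonal factor, which still leaves it invertible. Your $q_c$ absorbs this factor automatically.
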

	\proof For integers $0 \leqslant i \leqslant m-1$, we define the $m \times m$ matrices
	$C_i^{(0)}$ by $$
	C^{(0)}_i=\begin{pmatrix}
		1 &\xi^{i a_1} & \xi^{2i a_1} & \cdots &\xi^{(m-1)i a_1} \\
		1 & \xi^{(i-1)a_1} & \xi^{2(i-1)a_1} & \cdots & \xi^{(m-1)(i-1)a_1} \\
		1 & \xi^{(i-2)a_1} & \xi^{2(i-2)a_1} & \cdots & \xi^{(m-1)(i-2)a_1} \\
		\vdots & \vdots & \vdots & & \vdots \\
		1 & \xi^{(i-m+1)a_1} & \xi^{2(i-m+1)a_1} & \cdots & \xi^{(m-1)(i-m+1)a_1}
	\end{pmatrix}.
	$$
	For $1 \le j \le p-1$, define the $m \times m$ block matrices
	$\bigl(C_i^{(j)}\bigr)_{0 \le i \le m-1}$
	inductively by prescribing their blocks as follows:
	for $1 \le q', q'' \le m$,
	\[
	\bigl(C_i^{(j)}\bigr)_{q'q''}
	=
	\xi^{(i-q'+1)(q''-1)a_{j+1}}\;
	C_{\,q''-1}^{(j-1)}.
	\]
	
	Finally, set
	\[
	\Gamma = C_0^{(p-1)}.
	\]
	It is an $m^{p} \times m^{p}$ matrix. 	Let the set $\mc{I}=\{(r_1,r_2,\cdots, r_p)\mid 0\leqslant r_i \leqslant m-1, 1\leqslant i\leqslant p\}$ be equipped with the lexicographic order. It is convenient to label the row and column indices of the matrix $\Gamma$ by the set $\mc{I}$. The $((l_1,l_2,\cdots, l_p),(r_1,r_2,\cdots, r_p))$-entry of $\Gamma$ is $\xi^{\sum_{i=1}^{p-1}r_i(r_{i+1}-l_i)a_i-r_pl_pa_p}$.
	
	The determinant of the matrix $C^{(0)}_i$ is
	$$\begin{aligned}
		\left|C^{(0)}_i\right|&=(\prod_{j=1}^{m-1} \xi^{jia_1})
		\begin{vmatrix}
			1 & 1 & 1 & \cdots & 1 \\
			1 & \xi^{-a_1} & \xi^{-2a_1} & \cdots & \xi^{-(m-1)a_1} \\
			1 & \xi^{-2a_1} & \xi^{-4a_1} & \cdots & \xi^{-2(m-1)a_1} \\
			\vdots & \vdots & \vdots &   & \vdots \\
			1 & \xi^{-(m-1)a_1} & \xi^{-2(m-1)a_1} & \cdots & \xi^{-(m-1)^2 a_1}
		\end{vmatrix}	\\
		&=\xi^{\frac{m(m-1)}{2}ia_1}
		\begin{vmatrix}
			1 & 1 & 1 & \cdots & 1 \\
			1 & \xi^{-a_1} & \xi^{-2a_1} & \cdots & \xi^{-(m-1)a_1} \\
			1 & \xi^{-2a_1} & \xi^{-4a_1} & \cdots & \xi^{-2(m-1)a_1} \\
			\vdots & \vdots & \vdots &   & \vdots \\
			1 & \xi^{-(m-1)a_1} & \xi^{-2(m-1)a_1} & \cdots & \xi^{-(m-1)^2 a_1}
		\end{vmatrix}	\\
		&=(-1)^{(m-1)ia_1}
		\begin{vmatrix}
			1 & 1 & 1 & \cdots & 1 \\
			1 & \xi^{-a_1} & \xi^{-2a_1} & \cdots & \xi^{-(m-1)a_1} \\
			1 & \xi^{-2a_1} & \xi^{-4a_1} & \cdots & \xi^{-2(m-1)a_1} \\
			\vdots & \vdots & \vdots &   & \vdots \\
			1 & \xi^{-(m-1)a_1} & \xi^{-2(m-1)a_1} & \cdots & \xi^{-(m-1)^2 a_1}
		\end{vmatrix}.
	\end{aligned}
	$$
	Thus $\left|C^{(0)}_i\right|$ is, up to a non-zero factor, a Vandermonde determinant. Moreover, since $\gcd(a_1,m)=1$ and $\xi$ is a primitive $m$-th root of unity, the elements $1, \xi^{-a_1}, \xi^{-2a_1}, \dots, \xi^{-(m-1)a_1}$ are pairwise distinct. Consequently, $\left|C^{(0)}_i\right| \neq 0$,  and hence $C^{(0)}_i$ is invertible.
	
	For $1\leqslant j\leqslant p-1$, $0\leqslant i\leqslant m-1$,
	\[
	\begin{aligned}
		&\left| C_i^{(j)} \right| = 	\left(\prod_{j=1}^{m-1} \xi^{j i a_{j+1}}\right) \\
		&\;\;  \cdot
		\begin{vmatrix}
			C_0^{(j-1)} & C_1^{(j-1)} & C_2^{(j-1)} & \cdots & C_{m-1}^{(j-1)} \\
			C_0^{(j-1)} & \xi^{-a_{j+1}} C_1^{(j-1)} & \xi^{-2a_{j+1}} C_2^{(j-1)} & \cdots & \xi^{-(m-1)a_{j+1}} C_{m-1}^{(j-1)} \\
			C_0^{(j-1)} & \xi^{-2a_{j+1}} C_1^{(j-1)} & \xi^{-4a_{j+1}} C_2^{(j-1)} & \cdots & \xi^{-2(m-1)a_{j+1}} C_{m-1}^{(j-1)} \\
			\vdots & \vdots & \vdots & \ddots & \vdots \\
			C_0^{(j-1)} & \xi^{-(m-1)a_{j+1}} C_1^{(j-1)} & \xi^{-2(m-1)a_{j+1}} C_2^{(j-1)} & \cdots & \xi^{-(m-1)^2 a_{j+1}} C_{m-1}^{(j-1)}
		\end{vmatrix}.
	\end{aligned}
	\]
	By repeatedly applying Lemma~\ref{lemma 1}, we conclude that $\Gamma = C_0^{(p-1)}$ is invertible.
	
	Let $\overline{\Gamma}$ be the $(m^p-1)\times m^p$-matrix by deleting the first row of $\Gamma$. The system
	$$\overline{\Gamma}{\bf x}={\bf 0}$$
	is of $(m^p-1)$ linear equations in $m^p$ unknowns. It has non-zero solutions, say $$K=(k_{r_1,r_2,\cdots,r_p}),$$
	where $0\leqslant r_i\leqslant m-1$, $1\leqslant i\leqslant p$.
	However, the matrix $\Gamma$ is invertible. Then
	\begin{equation}\label{eq}
		\sum_{i=1}^p\sum_{r_i=0}^{m-1}\xi^{\sum_{i=1}^{-1}r_i(r_{i+1}-l_i)a_i-r_pl_pa_p}k_{r_1,r_2,\cdots,r_p}\begin{cases}=
			0&(l_1,\l_2,\cdots,l_p)\neq \bf{0};\\
			\neq 0&(l_1,\l_2,\cdots,l_p)=\bf{0}.
	\end{cases}\end{equation}
	
	Define a   sequence
	$\overrightarrow{\mc{Y}}^{(a_1,a_2,\cdots , a_p)}=(\overrightarrow{\mc{Y}}^{(a_1,a_2,\cdots , a_p)}_{(r_1,r_2,\cdots, r_p)})_{(r_1,r_2,\cdots, r_p)\in \mc{I}}$ as		
	$$\overrightarrow{\mc{Y}}^{(a_1,a_2,\cdots , a_p)}_{(r_1,r_2,\cdots, r_p)}=y_{1,a_1}^{m-r_{1}} y_{2,a_2}^{m-r_{2}} \cdots y_{p,a_p}^{m-r_{p}}.$$
	The sequence  $\overleftarrow{\mc{Y}}^{(a_1,a_2,\cdots , a_p)}=(\overleftarrow{\mc{Y}}^{(a_1,a_2,\cdots , a_p)}_{(r_1,r_2,\cdots, r_p)})_{(r_1,r_2,\cdots, r_p)\in \mc{I}}$ is defined as
	$$\overleftarrow{\mc{Y}}^{(a_1,a_2,\cdots , a_p)}_{(r_1,r_2,\cdots, r_p)}=k_{r_1,r_2,\cdots,r_p}y_{1,a_1}^{r_{1}} y_{2,a_2}^{r_{2}} \cdots y_{p,a_p}^{r_{p}}.$$
	We will prove that the  sequences 	$\overrightarrow{\mc{Y}}^{(a_1,a_2,\cdots , a_p)}$ and $\overleftarrow{\mc{Y}}^{(a_1,a_2,\cdots , a_p)}$  are pertinent under the $\overline{\mc{G}}$-action.		
	
	For $0\leqslant \al_1,\al_2,\cdots,\al_p\leqslant m-1$, we have
	\[
	\begin{array}{rcl}
		&&	(g_1^{\alpha_1}g_2^{\alpha_2}\cdots g_p^{\alpha_p})\overleftarrow{\mc{Y}}^{(a_1,a_2,\cdots , a_p)}_{(r_1,r_2,\cdots, r_p)}\\
		&=&
		(g_1^{\alpha_1}g_2^{\alpha_2}\cdots g_p^{\alpha_p})(k_{r_1,r_2,\cdots,r_p} y_{1,a_1}^{r_{1}} y_{2,a_2}^{r_{2}} \cdots y_{p,a_p}^{r_{p}})\\	
		&\overset{(\ref{e gy})}{=}&
		\xi^{-(\sum_{i=1}^{p-1}\alpha_{i+1}a_i r_i)-\alpha_1 a_p r_p}(k_{r_1,r_2,\cdots,r_p} y_{1,a_1}^{r_{1}} y_{2,a_2}^{r_{2}} \cdots y_{p,a_p}^{r_{p}})\\	
		&=&		\xi^{-(\sum_{i=1}^{p-1}\alpha_{i+1}a_i r_i)-\alpha_1 a_p r_p}\overleftarrow{\mc{Y}}^{(a_1,a_2,\cdots , a_p)}_{(r_1,r_2,\cdots, r_p)}.
	\end{array}
	\]
	For the product, we obtain
	\[\begin{array}{rcl}
		&&\overrightarrow{\mc{Y}}^{(a_1,a_2,\cdots , a_p)}_{(r_1,r_2,\cdots, r_p)}\overleftarrow{\mc{Y}}^{(a_1,a_2,\cdots , a_p)}_{(r_1,r_2,\cdots, r_p)}	\\
		&=&(y_{1,a_1}^{m-r_{1}} y_{2,a_2}^{m-r_{2}} \cdots y_{p,a_p}^{m-r_{p}})(k_{r_1,r_2,\cdots,r_p}y_{1,a_1}^{r_{1}} y_{2,a_2}^{r_{2}} \cdots y_{p,a_p}^{r_{p}})\\
		&\overset{(\ref{e yy})}{=}& \xi^{\sum_{i=1}^{p-1} r_i r_{i+1} a_i } k_{r_1,r_2,\cdots,r_p}y_{1,a_1}^{m} y_{2,a_2}^{m} \cdots y_{p,a_p}^{m}.\\
	\end{array}	\]			
	
	Then, for $0\leqslant \al_1,\al_2,\cdots,\al_p\leqslant m-1$ and $(\al_1,\al_2,\cdots,\al_p)\neq \mathbf{0}$,
	\[
	\begin{aligned}
		&\;\;\;\;\sum_{i=1}^{p} \sum_{r_i=0}^{m-1}
		\overrightarrow{\mc{Y}}^{(a_1,a_2,\cdots , a_p)}_{(r_1,r_2,\cdots, r_p)}
		(g_1^{\alpha_1}g_2^{\alpha_2}\cdots g_p^{\alpha_p})
		\overleftarrow{\mc{Y}}^{(a_1,a_2,\cdots , a_p)}_{(r_1,r_2,\cdots, r_p)} \\
		&= \sum_{i=1}^{p} \sum_{r_i=0}^{m-1}
		\xi^{-(\sum_{i=1}^{p-1}\alpha_{i+1}a_i r_i)-\alpha_1 a_p r_p}
		\overrightarrow{\mc{Y}}^{(a_1,a_2,\cdots , a_p)}_{(r_1,r_2,\cdots, r_p)}
		\overleftarrow{\mc{Y}}^{(a_1,a_2,\cdots , a_p)}_{(r_1,r_2,\cdots, r_p)} \\
		&= \sum_{i=1}^{p} \sum_{r_i=0}^{m-1}
		\xi^{-(\sum_{i=1}^{p-1}\alpha_{i+1}a_i r_i)-\alpha_1 a_p r_p}\xi^{\sum_{i=1}^{p-1} r_i r_{i+1} a_i}
		k_{r_1,r_2,\cdots,r_p} y_{1,a_1}^{m} y_{2,a_2}^{m} \cdots y_{p,a_p}^{m} \\									
		&= \sum_{i=1}^{p} \sum_{r_i=0}^{m-1}
		\xi^{\sum_{i=1}^{p-1} r_i (r_{i+1}-\alpha_{i+1}) a_i - \alpha_1 a_p r_p}
		k_{r_1,r_2,\cdots,r_p} y_{1,a_1}^{m} y_{2,a_2}^{m} \cdots y_{p,a_p}^{m} \\
		&\stackrel{(\ref{eq})}{=} 0.
	\end{aligned}
	\]		
	
	So $$\sum_{i=1}^{p} \sum_{r_i=0}^{m-1}y_{1,a_1}^{m-r_{1}} y_{2,a_2}^{m-r_{2}} \cdots y_{p,a_p}^{m-r_{p}} g(k_{r_1,r_2,\cdots,r_p}y_{1,a_1}^{r_{1}} y_{2,a_2}^{r_{2}} \cdots y_{p,a_p}^{r_{p}})=0$$ for all $1\neq g \in \overline{\mc{G}}$.  	This implies that
	the  sequences 	$\overrightarrow{\mc{Y}}^{(a_1,a_2,\cdots , a_p)}$ and $\overleftarrow{\mc{Y}}^{(a_1,a_2,\cdots , a_p)}$  are pertinent under the $\overline{\mc{G}}$-action.
	
	Moreover, \[\begin{aligned}
		&\sum_{i=1}^{p} \sum_{r_i=0}^{m-1}
		\overrightarrow{\mc{Y}}^{(a_1,a_2,\cdots , a_p)}_{(r_1,r_2,\cdots, r_p)}
		\overleftarrow{\mc{Y}}^{(a_1,a_2,\cdots , a_p)}_{(r_1,r_2,\cdots, r_p)}	\\	
		\overset{(\ref{e yy})}{=}&\xi^{\sum_{i=1}^{p-1} r_i r_{i+1} a_i } k_{r_1,r_2,\cdots,r_p}y_{1,a_1}^{m} y_{2,a_2}^{m} \cdots y_{p,a_p}^{m}.
	\end{aligned}\]				
	By (\ref{eq}), $\xi^{\sum_{i=1}^{p-1} r_i r_{i+1} a_i } k_{r_1,r_2,\cdots,r_p}\neq 0$.
	Therefore, $y_{1,a_1}^{m_{1}} y_{2,a_2}^{m_{2}} \cdots y_{p,a_p}^{m_{p}}\in \mk{r}(A,\mc{G})$. \qed	
	
	Let $\mc{B}$ be the algebra generated by $\{y_{i,0}\mid 1\leqslant i\leqslant p\}\bigcup\{y_{i,a}^m\mid 1\leqslant i\leqslant p,a\in \ZZ^\times_{m}\}$. Then $\mc{B}$ is a subalgebra of the center of $\mc{A}$, and $\mc{A}$ is finitely generated as a $\mc{B}$-module. Let $\mc{B}'=\mc{B}/(\mc{B}\cap \mk{r}(\mc{A},\overline{\mc{G}}))$. It is clear that $\mc{B}'$ is a subalgebra of $\mk{R}(A,\overline{\mc{G}})$ and $\mk{R}(\mc{A},\overline{\mc{G}})$ is finitely generated as $\mc{B}'$-module. Hence, $\GKdim(\mk{R}(\mc{A},\overline{\mc{G}}))=\GKdim(\mc{B}')$. By Lemma \ref{L0}, if $a_1, a_2,\cdots, a_p$ satisfies $(a_i,m)=1$, $1\le i\le p$, then $y_{1,a_1}^{m}y_{2,a_2}^{m}\cdots y_{p,a_p}^{m}\in \mk{r}(\mc{A},\overline{\mc{G}})$. So $y_{1,a_1}^{m}y_{2,a_2}^{m}\cdots y_{p,a_p}^{m}=0$ in $\mc{B}'$. Then $\GKdim (\mc{B}')\le pm-\phi(m)$, where $\phi(m)$ is the Euler's totient function, i.e., the number of integers between $1$ and $m$ that are coprime to $m$. Therefore, 
	$$\mk{p}(\mc{A},\overline{\mc{G}})=pm-\GKdim(\mk{R}(\mc{A},\overline{\mc{G}}))=pm-\GKdim(\mc{B}')\ge \phi(m).$$

	\subsection*{Acknowledgement}  The  authors are supported by a grant from NSFC (No. 12371017).

	\vspace{5mm}
	
	\bibliography{}

\begin{thebibliography}{99}
		\bibitem{bhz}
		Y.-H. Bao, J.-W. He, and J. J. Zhang,
		Pertinency of Hopf actions and quotient categories of Cohen-Macaulay algebras,
		J. Noncommut. Geom. \textbf{13} (2019), no. 2, 667--710.
		\bibitem{bhz1} Y.-H. Bao, J.-W. He, and J. J. Zhang, \it{Noncommutative Auslander Theorem},
		Trans. Amer. Math. Soc. {\bf 370}  (2018),  no. 12, 8613--8638.
		\bibitem{aus62} M. Auslander, \it{On the purity of the branch locus}, Amer. J. Math. {\bf 84} (1962) no. 1, 116--125.
		\bibitem{bax} R. J. Baxter, \it{Partition function of the eight-vertex lattice model}, Ann. Physics {\bf 70} (1972), 193--228.
			\bibitem{bg1} K. A. Brown and K. R. Goodearl, \it{Lectures on Algebraic Quantum Groups},  Birkh\"{a}user Verlag, Basel, 2002.
		\bibitem{br} E. Brieskorn,
		\it{Automorphic sets and braids and singularities},
		in \it{Braids} (Santa Cruz, CA, 1986),
		Contemp. Math. {\bf 78} (1988), 45--115.
		\bibitem{cjks} J. S. Carter, D. Jelsovsky, S. Kamada, L. Langford and M. Saito,
		\it{Quandle cohomology and state-sum invariants of knotted curves and surfaces},
		Trans. Amer. Math. Soc. {\bf 355} (2003) no. 10, 3947--3989.
		\bibitem{cho} F. Chouraqui,
		\it{Garside groups and Yang-Baxter equation},
		Comm. Algebra {\bf 38} (2010) no. 12, 4441--4460.
		\bibitem{de} P. Dehornoy, \it{Set-theoretic solutions of the Yang-Baxter equation, RC-calculus, and Garside germs},
		Adv. Math. {\bf 282} (2015), 93--127.
		
		\bibitem{dr92} V. G. Drinfeld, \it{On some unsolved problems in quantum group theory}, in: Quantum Groups (Leningrad, 1990), Lecture Notes in Math. {\bf 1510}, Springer, 1992, 1--8.
		\bibitem{ess} P. Etingof, T. Schedler, A. Soloviev, \it{Set-theoretical solutions to the quantum Yang-Baxter equation}, Duke Math. J. {\bf 100} (1999), 169-209.
		\bibitem{fr} R. Fenn and C. Rourke,
		\it{Racks and links in codimension two},
		J. Knot Theory Ramifications {\bf 1} (1992) no. 4, 343--406.
		\bibitem{gi4} T. Gateva-Ivanova, \it{Skew Polynomial Rings with Binomial Relations}, J. Algebra \textbf{185} (1996), 710--753.
		\bibitem{gi18} T. Gateva-Ivanova, \it{Set-theoretic solutions of the Yang-Baxter equation, braces and symmetric groups}, Adv. Math. {\bf 328} (2018), 649--701.
		\bibitem{gi1} T. Gateva-Ivanova, \it{Binomial skew polynomial rings, Artin-Schelter regularity, and binomial solutions of the Yang-Baxter equation},
		Serdica Math. J. {\bf 30} (2004), no. 2-3, 431--470.
		\bibitem{gi5} T. Gateva-Ivanova, \it{Garside Structures on Monoids with Quadratic Square-Free Relations}, Algebr Represent Theor \textbf{14} (2011), 779--802.
		\bibitem{gi6} T. Gateva-Ivanova, \it{A combinatorial approach to the set-theoretic solutions of the Yang-Baxter equation}, J. Math. Phys., {\bf 45} (2004), no. 10, 3828--3858.
		\bibitem{gi2} T. Gateva-Ivanova, \it{Quadratic algebras, Yang-Baxter equation, and Artin-Schelter regularity}, Adv. Math. {\bf 230} (2012), 2152--2175.
		
		\bibitem{gi} T. Gateva-Ivanova, \it{Veronese Subalgebras and Veronese Morphisms for a Class of Yang-Baxter Algebras}, J. Noncommut. Geom. \textbf{20} (2026), 221--267.
		\bibitem{gic} T. Gateva-Ivanova, P. Cameron, \it{Multipermutation solutions of the Yang-Baxter equation},
		Comm. Math. Phys. {\bf 309} (2012), 583--621.
		
		\bibitem{gim1} T. Gateva-Ivanova, S. Majid, \it{Matched pairs approach to set theoretic solutions of the Yang-Baxter
			equation}, J. Algebra {\bf 319} (2008), 1462--1529.
		\bibitem{gim2} 	T. Gateva-Ivanova, S. Majid, \it{Quantum spaces associated to multipermutation solutions of level two}, Alg. Rep..
		Theor. {\bf 14} (2011), 341--376.
		\bibitem{giv} T. Gateva-Ivanova, M. Van den Bergh, \it{Semigroups of I-type}, J. Algebra {\bf 206} (1998), 97--112.
			\bibitem{hz19} J.-W. He and Y. Zhang, \it{Local cohomology associated to the radical of a group action on a noetherian algebra}, Israel J. Math. {\bf 231} (2019), 303--342.
		\bibitem{jkv} E. Jespers, {\L}. Kubat and A. Van Antwerpen, \it{The structure monoid and algebra of a non-degenerate set-theoretic solution of the Yang-Baxter equation}, Trans. Amer. Math. Soc. {\bf 372} (2019) no. 10, 7191--7223.
		\bibitem{jpz} P. Jedli\u{a}ka, A. Pilitowska, and A. Zamojska-Dzienio, \it{The construction of multipermutation solutions of the Yang-Baxter equation of level 2}, J. Combin. Theory Ser. A {\bf 176} (2020), 105295, 35pp.
		\bibitem{joz} P. J{\o}rgensen and J. J. Zhang, \it{Gourmet's Guide to Gorensteinness}, Adv. Math. {\bf 151} (2000),
		313--345.
\bibitem{kas} C. Kassel, \it{Quantum Groups}, Graduate texts in mathematics
\textbf{155}, Springer-Verlag, 1995.
		\bibitem{kkz1}E. Kirkman, J. Kuzmanovich and J. J. Zhang, \textit{Rigidity of graded regular algebras}, Trans. Amer. Math. Soc. \textbf{360} (2008), 6331--6369.
		\bibitem{lyz} J.-H. Lu, M. Yan and Y.-C. Zhu, \it{On the set-theoretical Yang--Baxter equation}, Duke Math. J. {\bf 104} (2000) no. 1, 153--170.
		\bibitem{man} Y. I. Manin, \it{Quantum groups and noncommutative geometry}, Universit\'e de Montr\'eal Centre de Recherches Math\'ematiques, Montreal, QC, 1988.
		\bibitem{ms} I. Mori and S.P. Smith. \it{m-Koszul Artin-Schelter Regular Algebras}, J. Algebra {\bf 446} (2016): 373--399.
		\bibitem{kz} E. Kirkman and J.J. Zhang, \it{The Jacobian, Reflection Arrangement and Discriminant for Reflection Hopf Algebras}, Int. Math. Res. Not. {\bf 2021} (2021), no. 13, 9853--9907.
		\bibitem{rrz} M. Reyes, D. Rogalski and J.J. Zhang, \it{Skew Calabi-Yau algebras and Homological identities}, Adv. Math. {\bf 264} (2014), 308--354.
		\bibitem{ru} W. Rump, \it{A decomposition theorem for square-free unitary solutions of the quantum Yang-Baxter equation}, Adv. Math. {\bf 193}, no. 1 (2005),40--55.
		\bibitem{ru07} W. Rump, \it{Braces, radical rings, and the quantum Yang--Baxter equation}, J. Algebra {\bf 307} (2007) no. 1, 153--170.
		\bibitem{wz} Q. S. Wu and C. Zhu, \it{Skew group algebras of Calabi-Yau
			algebras},  J. Algebra {\bf 340} (2011), 53--76.
		\bibitem{yang} C. N. Yang, \it{Some exact results for the many-body problem in one dimension with repulsive delta-function interaction}, Phys. Rev. Lett. {\bf 19} (1967), 1312--1315.
		
	\end{thebibliography}

\end{document}